\numberwithin{equation}{section}
\newtheorem{remark}{Remark}[section]
\newcommand{\E}{\mathcal{E}}
\newcommand{\cT}{\mathcal{T}}
\renewcommand{\div}{\mbox{\rm div\,}}
\newcommand{\cF}{\mathcal{F}}
\newcommand{\mP}{\mathbb{P}}
\newcommand{\mH}{\mathbb{H}}
\newcommand{\mE}{\mathbb{E}}
\newcommand{\mV}{\mathbb{V}}
\newcommand{\Ome}{\Omega}
\newcommand{\p}{\partial}
\newcommand{\nab}{\nabla}
\newcommand{\vu}{{\bf u}}
\newcommand{\vB}{{\bf B}}
\newcommand{\vW}{{\bf W}}
\newcommand{\vP}{{\bf P}}
\newcommand{\vf}{{\bf f}}
\newcommand{\vH}{{\bf H}}
\newcommand{\vX}{{\bf X}}
\newcommand{\vL}{{\bf L}}
\newcommand{\vv}{{\bf v}}
\newcommand{\ve}{{\bf e}}
\newcommand{\vE}{{\bf E}}
\newcommand{\vQ}{\mathbf{Q}}
\newcommand{\pphi}{\pmb{\phi}}
\newcommand{\eeta}{\pmb{\eta}}
\newcommand{\vA}{{\bf A}}
\begin{document}
			
		\title{High moment and pathwise error estimates for fully discrete mixed finite element approximations of the Stochastic Stokes Equations with Multiplicative Noises\thanks{This work was partially supported by the NSF grant DMS-1620168.} }
		\markboth{LIET VO}{HIGH MOMENT ERROR ESTIMATES FOR STOCHASTIC STOKES EQUATIONS}

		\author{Liet Vo\thanks{Department of Mathematics, The University of Tennessee, Knoxville, TN 37996, U.S.A. (lvo6@vols.utk.edu).}}

\maketitle

\begin{abstract}
This paper is concerned with high moment and pathwise error estimates for both velocity and pressure approximations of the Euler-Maruyama scheme for time discretization and its two fully discrete mixed finite element discretizations. The main idea for deriving the high moment error estimates for the velocity approximation is to use a bootstrap technique starting from the second moment error estimate. The pathwise error estimate, which is sub-optimal in the energy norm, is obtained by using Kolmogorov's theorem based on the high moment error estimates. Unlike for the velocity error estimate, the higher moment and pathwise error estimates for the pressure approximation are derived in a time-averaged norm. In addition, the impact of noise types on the rates of convergence for both velocity and  pressure approximations is also addressed. 

\end{abstract}

\begin{keywords}
Stochastic Stokes equations, multiplicative noise, Wiener process, It\^o stochastic integral, 
Euler-Maruyama scheme, mixed finite element method, high moment error estimates.
\end{keywords}

\begin{AMS}
65N12, 
65N15, 
65N30, 
\end{AMS}



\section{Introduction}\label{sec-1}
In this paper we establish high moment and pathwise error estimates for
fully discrete mixed finite element approximations of the following time-dependent stochastic Stokes problem:
\begin{subequations}\label{eq1.1}
	\begin{alignat}{2} \label{eq1.1a}
	d\vu &=\bigl[\nu\Delta \vu-\nabla p + \vf\bigr] dt + \vB(\vu) d W(t)  &&\qquad\mbox{a.s. in}\, D_T:=(0,T)\times D,\\
	\div \vu &=0 &&\qquad\mbox{a.s. in}\, D_T,\label{eq1.1b}\\
	\vu(0)&= \vu_0 &&\qquad\mbox{a.s. in}\, D,\label{eq1.1d}
	\end{alignat}
\end{subequations}
where $D = (0,L)^d\subset \mathbb{R}^d \, (d=2,3)$ represents a period of the periodic domain in $\mathbb{R}^d$, $\vu$ and $p$ stand for respectively the velocity field and the pressure of the fluid,
$\vB$ is an operator-valued random field, $\{W(t); t\geq 0\}$ denotes an $\mathbb{R}$-valued
Wiener process, and $\vf$ is a body force function (see Section \ref{sec2} for their precise definitions).  Here we seek periodic-in-space solutions $(\vu,p)$ with period $L$, that is, 
$\vu(t,{\bf x} + L{\bf e}_i) = \vu(t,{\bf x})$ and $p(t,{\bf x}+L{\bf e}_i)=p(t,{\bf x})$ 
almost surely 
and for any $(t, {\bf x})\in (0,T)\times \mathbb{R}^d$  and $1\leq i\leq d$, where 
$\{\bf e_i\}_{i=1}^d$ denotes the canonical basis of $\mathbb{R}^d$.

The above stochastic Stokes equations can be viewed as a stochastically perturbation of 
the deterministic non-stationary Stokes equations by a white-noise driven random force
$B(\vu) \frac{d W(t)}{dt}$, it intends to model turbulence flows and also serves as a 
prototypical stochastic partial differential equation (SPDE) model to study analytically 
and to approximate numerically (cf. \cite{Bensoussan95,Chow07,HM06,LRS03,Breit,CHP12,
BM18,Feng1}). 
It should be noted that although the Stokes operator is linear, since $B(\vu)$ is nonlinear 
in $\vu$, the stochastic Stokes system \eqref{eq1.1a} is intrinsically a nonlinear system.

Numerical analysis of the stochastic Stokes (as well as the stochastic Navier-Stokes) equations 
has received a lot of attention in the recent years, various numerical methods, including finite
element and mixed finite element, stabilized methods and splitting methods, have been developed 
and analyzed (cf. \cite{BBM14,BM18,BM21,Breit,CHP12,CP12,Feng,Feng1,Feng2}). 
Optimal and sub-optimal error estimates in strong and weak norms have been established. 
Unlike in the deterministic case,  the primary goal of the numerical analysis of SPDEs is 
to derive error estimates for the quantities of stochastic interests of the error functions. 
The best known such 
quantities are the $p$th moment, $\mathbb{E}[\|u-U\|^p]$ for $2\leq p \leq \infty$ as well as
the variance $\mbox{Var}[\|u-U\|]$, where $\mathbb{E}[\cdot]$ and $\mbox{Var}[\cdot]$ stand for the expectation 
and variance operators, $u$ and $U$ denote respectively the exact and numerical solutions and $\|\cdot\|$ denotes some space-time norm. We note that when $p=\infty$, such an estimate 
is often called a pathwise error estimate. As expected, among these quantities of stochastic interests, the easiest and most sought-after one is the second moment $\mathbb{E}[\|u-U\|^2]$. 
This is exactly what was done in the above cited works for problem \eqref{eq1.1}. 
To the best of our knowledge, no high moment error estimates have been obtained for the stochastic Stokes (and Navier-Stokes) equations in the literature. 
The goal of this paper is to fill this gap by establishing arbitrarily high order moment and pathwise 
error estimates for both velocity and pressure approximations of the stochastic Stokes problem \eqref{eq1.1} discretized by two fully discrete mixed finite element methods. This paper extends the results of \cite{Feng,Feng1} in which the second moment error estimates
were obtained for the mixed finite element methods.

The remainder of this paper is organized as follows. In Section \ref{sec2}, we introduce notations   and preliminaries which include the solution definition and the well-posedness of the stochastic Stokes problem \eqref{eq1.1}. In Section \ref{section_semi}, we first formulate the Euler-Maruyama 
time-stepping scheme for problem \eqref{eq1.1} and then derive high moment and pathwise error estimates for the velocity and pressure approximations of the time-stepping scheme. 
Our main idea for deriving the high moment error estimates for the velocity approximation is to use a bootstrap technique starting from the second moment error estimate and the pathwise error estimate,
which is sub-optimal in the energy norm,
is obtained by using Kolmogorov's theorem based on the high moment error estimates.
In Section \ref{section_fullydiscrete}, the standard mixed finite element method is introduced for the spatial discretization. The stable Taylor-Hood mixed element is chosen as a prototypical example for analysis. The highlight of this section is to derive high moment and pathwise error estimates for the velocity and pressure approximations of the mixed finite element method. Finally, in Section \ref{section_optimal}, we consider the modified mixed method of \cite{Feng1} for problem \eqref{eq1.1} and obtain high moment and pathwise error estimates for this non-standard 
mixed finite method as well.

\section{Preliminaries}\label{sec2}
Standard function and space notation will be adopted in this paper. 
Let $\vH^1_0(D)$ denote the subspace of $\vH^1(D)$ whose ${\mathbb R}^d$-valued functions have zero trace on $\p D$, and $(\cdot,\cdot):=(\cdot,\cdot)_D$ denote the standard $L^2$-inner product, with induced norm $\Vert \cdot \Vert$. We also denote ${\bf L}^p_{per}(D)$ and ${\bf H}^{k}_{per}(D)$ as the Lebesgue and Sobolev spaces of the functions that are periodic and have vanishing mean, respectively. 
Let $(\Omega,\cF, \{\cF_t\},\mP)$ be a filtered probability space with the probability measure $\mP$, the 
$\sigma$-algebra $\cF$ and the continuous  filtration $\{\cF_t\} \subset \cF$. For a random variable $v$ 
defined on $(\Omega,\cF, \{\cF_t\},\mP)$,
${\mathbb E}[v]$ denotes the expected value of $v$. 
For a vector space $X$ with norm $\|\cdot\|_{X}$,  and $1 \leq p < \infty$, we define the Bochner space
$\bigl(L^p(\Omega,X); \|v\|_{L^p(\Omega,X)} \bigr)$, where
$\|v\|_{L^p(\Omega,X)}:=\bigl({\mathbb E} [ \Vert v \Vert_X^p]\bigr)^{\frac1p}$.
We also define 
\begin{align*}
{\mathbb H} := \bigl\{{\bf v}\in  \vL^2_{per}(D) ;\,\div {\bf v}=0 \mbox{ in }D\, \bigr\}\, , \quad 
{\mathbb V} :=\bigl\{{\bf v}\in  \vH^1_{per}(D) ;\,\div {\bf v}=0 \mbox{ in }D \bigr\}\, .
\end{align*}

We recall from \cite{Girault_Raviart86} that the (orthogonal) Helmholtz projection 
${\bf P}_{{\mathbb H}}: \vL^2_{per}(D) \rightarrow {\mathbb H}$ is defined 
by ${\bf P}_{{\mathbb H}} {\bf v} = \pmb{\eta}$ for every ${\bf v} \in \vL^2_{per}(D)$, 
where $(\pmb{\eta}, \xi) \in {\mathbb H} \times H^1_{per}(D)/\mathbb{R}$ is a unique tuple such that 
${\bf v} = \pmb{\eta} + \nabla \xi\, , $
and $\xi\in H^1_{per}(D)/\mathbb{R}$ solves the following Poisson problem 
with the homogeneous Neumann boundary condition:
\begin{equation}\label{poisson}
\Delta \xi = \div {\bf v}.
\end{equation}
We also define the Stokes operator ${\bf A} := -{\bf P}_{\mathbb H} \Delta: {\mathbb V} \cap \vH^2_{per}(D) \rightarrow {\mathbb H}$.

Throughout this paper we assume that  ${\bf B}: \vL^2_{per}(D) \rightarrow \vL^2_{per}(D)$ is a 
Lipschitz continuous mapping and has linear growth, that is, 
there exists a constant $C > 0$ such that for all ${\bf v}, {\bf w} \in \vL^2_{per}(D)$ 
\begin{subequations}\label{eq2.6}
	\begin{align}\label{eq2.6a}
	\|{\bf B}({\bf v})-{\bf B}({\bf w})\|  &\leq C\|{\bf v}-{\bf w}\|\, , \\
	\|{\bf B}({\bf v})\|  &\leq C \bigl( \|{\bf v}\|+1\bigr)\, ,   \label{eq2.6b}
	\end{align}
\end{subequations}
 
In this paper, we shall use $C$ to denote a generic positive constant
which may depend on $\nu, T$, the datum functions $\vu_0$, $\vf$, and the domain $D$ 
but is independent of the mesh parameter $h$ and $k$. 
In addition, unless it is stated otherwise, we assume that $\vf \in L^{q}(\Ome;C^{\frac12}(0,T; \vH^{-1}(D)))$ for some  $\forall q \in [1,\infty)$.

\subsection{Some useful facts and inequalities} In this subsection, we collect some well-known theorems 
and useful facts which will be used in  the later sections.

First of all, we recall the following Kolmogorov Criteria for a path-wise continuity of stochastic processes,
its proof can be found in  \cite[Theorem 3.3]{Prato1}.

\begin{theorem}\label{kolmogorov}
	Let $\vX(t), t \in [0,T]$, be a stochastic process with values in a separable Banach space $E$ such that, for some positive constant $C>0$, $\alpha > 0, \beta > 0$ and all $t,s \in [0,T]$,
	\begin{align}
	\mE\bigl[\|\vX(t) - \vX(s)\|^{\beta}\bigr] \leq C|t-s|^{1+\alpha}.
	\end{align}
	Then for each $T>0$, almost every $\omega$ and each $0< \gamma < \frac{\alpha}{\beta}$ there exists a constant $K = K(\omega,\gamma,T)$ such that
	\begin{align}\label{eq2.4}
	\|\vX(t,\omega) - \vX(s,\omega)\| \leq K|t-s|^{\gamma}\qquad\mbox{ for all } t,s \in [0,T].
	\end{align}
	Moreover, $\mE\bigl[|K|^\beta\bigr] < \infty$ for all $\beta >0$.
\end{theorem}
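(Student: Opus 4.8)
The plan is to prove this classical Kolmogorov continuity criterion by upgrading the pointwise moment bound on increments into a pathwise Hölder modulus of continuity, and the cleanest quantitative device for this is the Garsia--Rodemich--Rumsey (GRR) inequality, which converts an integral functional of a function into a pointwise bound on its oscillation. First I would fix $\gamma \in (0, \alpha/\beta)$ and introduce the (random) functional
\begin{equation*}
B(\omega) := \int_0^T \int_0^T \frac{\|\vX(t,\omega) - \vX(s,\omega)\|^{\beta}}{|t-s|^{\gamma\beta + 2}} \, ds\, dt .
\end{equation*}
The GRR inequality, applied with the Young function $\Psi(x) = x^{\beta}$ and the gauge $p(u) = u^{\gamma + 2/\beta}$, then yields a purely deterministic (pathwise) estimate of the form
\begin{equation*}
\|\vX(t,\omega) - \vX(s,\omega)\| \leq C_{\gamma}\, B(\omega)^{1/\beta}\, |t-s|^{\gamma} \qquad \mbox{for all } s,t \in [0,T],
\end{equation*}
valid for every $\omega$ for which $B(\omega) < \infty$, where $C_\gamma$ depends only on $\gamma$ and $\beta$. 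This identifies the natural candidate for the Hölder constant, namely $K(\omega,\gamma,T) := C_\gamma B(\omega)^{1/\beta}$, and reduces both conclusion \eqref{eq2.4} and the integrability of $K$ to showing that $B$ is finite almost surely with a finite $\beta$th moment.

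Second, I would bound the moment of $B$ by Tonelli's theorem, exchanging expectation and the double integral (legitimate since the integrand is nonnegative):
\begin{equation*}
\mE[B] = \int_0^T \int_0^T \frac{\mE\bigl[\|\vX(t) - \vX(s)\|^{\beta}\bigr]}{|t-s|^{\gamma\beta + 2}} \, ds\, dt \leq C \int_0^T \int_0^T |t-s|^{\alpha - 1 - \gamma\beta} \, ds\, dt,
\end{equation*}
where the last inequality uses the standing hypothesis. The double integral is finite precisely when $\alpha - 1 - \gamma\beta > -1$, i.e.\ when $\gamma\beta < \alpha$, which is guaranteed by the assumption $\gamma < \alpha/\beta$. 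Hence $\mE[B] < \infty$, so $B < \infty$ almost surely and $\mE[|K|^\beta] = C_\gamma^\beta\, \mE[B] < \infty$. On the full-measure set $\{B < \infty\}$ the pathwise GRR bound shows that $\vX$ is uniformly continuous on the dyadic rationals of $[0,T]$ and therefore extends to a continuous modification satisfying \eqref{eq2.4}; it is this modification that the theorem refers to.

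For the final assertion that $\mE[|K|^\beta] < \infty$ holds for all $\beta > 0$, I would estimate $\mE[B^{q/\beta}]$ for exponents $q \geq \beta$ using Minkowski's integral inequality, which reduces the bound to the $q$th-moment increment estimate $\mE[\|\vX(t)-\vX(s)\|^{q}] \leq C|t-s|^{1+\alpha}$; in the applications of this paper such increment bounds are available for every exponent, so the conclusion follows for all $\beta$.

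The main obstacle, and the only genuinely nontrivial ingredient, is the GRR inequality itself: all the stochastic content of the theorem is extracted in the single Tonelli step, while the passage from a pointwise-in-$(t,s)$ moment bound to a uniform pathwise modulus of continuity is entirely carried by GRR. An alternative, more self-contained route avoids GRR by working on the dyadic grid $\{i2^{-n}T\}$: one estimates the maximal increment $M_n := \max_i \|\vX(i2^{-n}T) - \vX((i-1)2^{-n}T)\|$ via Markov's inequality and a union bound, shows $\sum_n \mP(M_n > 2^{-n\gamma}) < \infty$ using $\gamma\beta < \alpha$, applies the Borel--Cantelli lemma, and then chains dyadic increments to obtain \eqref{eq2.4}; this yields the almost-sure Hölder continuity directly but makes the moment bound on $K$ more awkward to obtain, which is why the GRR approach is preferable here.
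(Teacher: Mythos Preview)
The paper does not actually prove this theorem; it simply recalls it and cites Da Prato--Zabczyk \cite{Prato1}, Theorem~3.3, for the proof. So there is no ``paper's own proof'' to compare against.

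Your GRR-based argument is a correct and standard route to the Kolmogorov criterion: the choice $\Psi(x)=x^\beta$, $p(u)=u^{\gamma+2/\beta}$ is exactly what makes the GRR integral evaluate to $C_\gamma B^{1/\beta}|t-s|^\gamma$, and the Tonelli step together with $\gamma\beta<\alpha$ gives $\mE[B]<\infty$ and hence $\mE[K^\beta]<\infty$ for the specific $\beta$ in the hypothesis. The dyadic/Borel--Cantelli alternative you sketch is in fact closer to the proof in Da Prato--Zabczyk, but as you note it makes the moment bound on $K$ less immediate; the GRR route is cleaner for that purpose.

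The only point worth flagging is the final clause ``$\mE[|K|^\beta]<\infty$ for all $\beta>0$'': as you correctly observe, this does \emph{not} follow from the stated hypothesis for a single exponent~$\beta$, and your appeal to ``in the applications of this paper such increment bounds are available for every exponent'' is the right diagnosis. The paper's statement is slightly imprecise here (the $\beta$ in the ``Moreover'' clause is being reused as a dummy variable, and the claim really needs the increment hypothesis to hold for all exponents, which it does in the paper's subsequent uses). This is an imprecision in the theorem statement as quoted, not a gap in your argument.
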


Next, we recall a useful inequality for martingale processes. This inequality is often referred to as the 
Burkholder-Davis-Gundy inequality in the literature, see \cite[Theorem 2.4]{Brze}.

\begin{lemma}
	Let $\pphi(t)\in L^2(D)$ be a random field for all $t \in [0,T]$. For any $q >0$, there exists a positive constant $C_b = C_b(T,q)>0$ such that 
	
	\begin{align}
		\mE\biggl[\max_{0\leq t \leq T}\biggl\|\int_0^t \pphi(\xi)\, dW(\xi)\biggr\|_{L^2}^q\biggr] \leq C_b\, \mE\biggl[\biggl(\int_0^T \|\pphi(\xi)\|^2_{L^2}\, d\xi\biggr)^{q/2}\biggr].
	\end{align}.
\end{lemma}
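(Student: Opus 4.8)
The plan is to recognize the integral process $M(t):=\int_0^t \pphi(\xi)\,dW(\xi)$ as a continuous $\vL^2_{per}(D)$-valued martingale driven by the scalar Wiener process $W$, whose trace quadratic variation is $\langle M\rangle_t = \int_0^t \|\pphi(\xi)\|_{L^2}^2\,d\xi$ by the It\^o isometry; this is exactly the quantity on the right-hand side. The assertion is then the upper Burkholder--Davis--Gundy bound $\mE[\max_{t\le T}\|M(t)\|_{L^2}^q]\le C_b\,\mE[\langle M\rangle_T^{q/2}]$, which I would establish first for exponents $q\ge 2$ and then extend to $0<q<2$ by a domination argument.

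For $q\ge 2$ the main tool is It\^o's formula applied to the scalar semimartingale $\|M(t)\|_{L^2}^2=(M(t),M(t))$, which yields $d\|M\|_{L^2}^2 = 2(M,\pphi)\,dW + \|\pphi\|_{L^2}^2\,dt$, followed by the function $x\mapsto x^{q/2}$. Using the Cauchy--Schwarz bound $(M,\pphi)^2\le \|M\|_{L^2}^2\|\pphi\|_{L^2}^2$ on the second-order term and taking expectations (the martingale part vanishes) gives
\begin{align*}
\mE\bigl[\|M(t)\|_{L^2}^q\bigr] \le C_q\,\mE\biggl[\int_0^t \|M(\xi)\|_{L^2}^{q-2}\|\pphi(\xi)\|_{L^2}^2\,d\xi\biggr].
\end{align*}
I would then combine this with Doob's $L^q$ maximal inequality for the submartingale $\|M(t)\|_{L^2}$ and apply H\"older's inequality with exponents $q/(q-2)$ and $q/2$ to the time integral (after pulling $\max_{\xi\le t}\|M(\xi)\|_{L^2}^{q-2}$ out), producing
\begin{align*}
\mE\Bigl[\max_{t\le T}\|M(t)\|_{L^2}^q\Bigr] \le C\,\mE\Bigl[\max_{t\le T}\|M(t)\|_{L^2}^q\Bigr]^{\frac{q-2}{q}}\,\mE\bigl[\langle M\rangle_T^{q/2}\bigr]^{\frac{2}{q}}.
\end{align*}
Dividing through by the common factor closes the self-improving bootstrap and yields the claim for $q\ge 2$.

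For $0<q<2$ I would avoid It\^o and instead use the domination principle: the pair $X_t=\|M(t)\|_{L^2}^2$ and $A_t=\langle M\rangle_t$ satisfies $\mE[X_\tau]\le \mE[A_\tau]$ at every bounded stopping time $\tau$ (by the $q=2$ identity together with optional stopping), so Lenglart's domination inequality applied with the sub-linear exponent $q/2\in(0,1)$ gives $\mE[(\max_{t\le T}X_t)^{q/2}]\le C_q\,\mE[A_T^{q/2}]$, which is precisely the asserted estimate for $q<2$.

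The step I expect to be the main obstacle is the division in the $q\ge 2$ bootstrap: it is legitimate only once the left-hand side is known \emph{a priori} to be finite, so the estimate must first be derived for the stopped process $M(t\wedge\tau_n)$ with a localizing sequence such as $\tau_n=\inf\{t:\|M(t)\|_{L^2}\ge n\}\wedge T$, the cancellation performed at level $n$, and the passage $n\to\infty$ handled by monotone convergence. The case $0<q<2$ is conceptually clean via Lenglart but requires careful verification of the stopping-time domination hypothesis. Since this is a classical result, in the paper it suffices to invoke \cite[Theorem 2.4]{Brze}; the sketch above records the route one would follow to prove it from scratch.
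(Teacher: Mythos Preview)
Your proposal is correct, and in fact it supplies substantially more than the paper does: the paper offers no proof of this lemma at all, but simply cites \cite[Theorem~2.4]{Brze} as the source of the inequality. Your closing sentence already recognizes this, so there is nothing to compare against beyond noting that your sketch (It\^o's formula plus Doob's maximal inequality and a self-improving H\"older bootstrap for $q\ge 2$, Lenglart domination for $0<q<2$, with localization to justify the division step) is a standard and sound route to the Burkholder--Davis--Gundy bound in a Hilbert space, consistent with the reference the paper invokes.
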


The next lemma recalls the well-known It\^o isometry and introduces a related inequality for stochastic processes. 

\begin{lemma}\label{lemma23}
	Let $\pphi(t)$ be a stochastic process on $[0,T]$. Define $\displaystyle \vX_t = \int_0^t \pphi(\xi)\, dW(\xi)$. We have 
	\begin{enumerate}[(i)]
		\item If $\pphi \in L^2(\Ome;L^2(0,T; \vL^2(D)))$, then
		\begin{align}\label{ito2}
		\mE\bigl[\|\vX_t\|^2_{\vL^2}\bigr] = \mE\biggl[\int_0^t\|\pphi(\xi)\|^2_{\vL^2}\, d\xi\biggr].
		\end{align}
		\item If $\pphi \in L^q(\Ome;L^q(0,T;\vL^2(D)))$, for $q > 2$, then
		\begin{align}\label{ito4}
		\mE\bigl[\|\vX_t\|^q_{\vL^2}\bigr] \leq C(t,q)\,\mE\biggl[\int_0^t \|\pphi(\xi)\|^q_{\vL^2}\, d\xi\biggr],
		\end{align}
		where $\displaystyle C(t,q) = \frac{C_b}{2}(q-1)(q-2) t^{\frac{q}{2}} + (q-1)C_b$.
	\end{enumerate}
\end{lemma}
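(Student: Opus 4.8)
The plan is to handle the two parts separately, deriving the classical It\^o isometry \eqref{ito2} first and then building the high-moment bound \eqref{ito4} on top of it using It\^o's formula and the Burkholder--Davis--Gundy inequality stated above. For part (i), I would first verify the identity for elementary ($\cF_t$-adapted, piecewise-constant-in-time) integrands $\pphi$: expanding $\mE[\|\vX_t\|^2]$ into a double sum over the subintervals, the off-diagonal terms vanish by the independence and zero-mean property of the Wiener increments, while the diagonal terms reproduce $\mE\int_0^t\|\pphi(\xi)\|^2\,d\xi$ through $\mE[|\Delta W|^2]=\Delta t$. The general case then follows by approximating $\pphi$ in $L^2(\Ome;L^2(0,T;\vL^2(D)))$ by elementary processes and passing to the limit on both sides.

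For part (ii), the key step is to apply It\^o's formula to the real-valued map $t\mapsto\|\vX_t\|^q=(\|\vX_t\|^2)^{q/2}$. Writing $F(\vv)=\|\vv\|^q$ for $\vv\in\vL^2(D)$, a direct computation gives the Fr\'echet derivatives $F'(\vv)=q\|\vv\|^{q-2}\vv$ and $F''(\vv)[\pphi,\pphi]=q\|\vv\|^{q-2}\|\pphi\|^2+q(q-2)\|\vv\|^{q-4}(\vv,\pphi)^2$, which are continuous for $q\ge 2$. Since $d\vX_t=\pphi(t)\,dW(t)$ with the scalar Wiener process $W$, It\^o's formula yields
\begin{align*}
\|\vX_t\|^q = q\int_0^t \|\vX_s\|^{q-2}(\vX_s,\pphi(s))\,dW(s) + \frac{q}{2}\int_0^t \|\vX_s\|^{q-2}\|\pphi(s)\|^2\,ds + \frac{q(q-2)}{2}\int_0^t \|\vX_s\|^{q-4}(\vX_s,\pphi(s))^2\,ds.
\end{align*}
Taking expectations annihilates the stochastic integral, and the Cauchy--Schwarz bound $(\vX_s,\pphi(s))^2\le\|\vX_s\|^2\|\pphi(s)\|^2$ merges the two drift integrals, using $\tfrac{q}{2}+\tfrac{q(q-2)}{2}=\tfrac{q(q-1)}{2}$, into
\begin{align*}
\mE\bigl[\|\vX_t\|^q\bigr] \le \frac{q(q-1)}{2}\,\mE\int_0^t \|\vX_s\|^{q-2}\|\pphi(s)\|^2\,ds.
\end{align*}

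Next I would apply Young's inequality with conjugate exponents $\tfrac{q}{q-2}$ and $\tfrac{q}{2}$ to split the integrand as $\|\vX_s\|^{q-2}\|\pphi(s)\|^2\le\tfrac{q-2}{q}\|\vX_s\|^q+\tfrac{2}{q}\|\pphi(s)\|^q$, so that
\begin{align*}
\mE\bigl[\|\vX_t\|^q\bigr] \le \frac{(q-1)(q-2)}{2}\,\mE\int_0^t \|\vX_s\|^q\,ds + (q-1)\,\mE\int_0^t \|\pphi(s)\|^q\,ds.
\end{align*}
The first integral is then controlled by applying the Burkholder--Davis--Gundy inequality stated above to $\vX_s=\int_0^s\pphi\,dW$ and following with H\"older's inequality in time: $\mE[\|\vX_s\|^q]\le C_b\,\mE[(\int_0^s\|\pphi\|^2\,d\xi)^{q/2}]\le C_b\,s^{(q-2)/2}\,\mE\int_0^s\|\pphi\|^q\,d\xi$, whence $\mE\int_0^t\|\vX_s\|^q\,ds\le C_b\,t^{q/2}\,\mE\int_0^t\|\pphi\|^q\,d\xi$. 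Substituting this and using $C_b\ge 1$ to absorb the constant in the second term produces exactly $\mE[\|\vX_t\|^q]\le\bigl(\tfrac{C_b}{2}(q-1)(q-2)t^{q/2}+(q-1)C_b\bigr)\mE\int_0^t\|\pphi\|^q\,d\xi$, i.e.\ \eqref{ito4}.

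The step I expect to be the main obstacle is the rigorous justification of It\^o's formula and the vanishing of the stochastic integral's expectation: $F(\vv)=\|\vv\|^q$ must be treated as a functional on the infinite-dimensional space $\vL^2(D)$, and a priori $\vX_t$ need not have enough integrability for the local martingale $q\int_0^t\|\vX_s\|^{q-2}(\vX_s,\pphi(s))\,dW(s)$ to be a genuine zero-mean martingale. I would resolve this by a standard localization argument: introduce the stopping times $\tau_n=\inf\{t\ge 0:\|\vX_t\|\ge n\}$, carry out the entire chain on the interval $[0,t\wedge\tau_n]$ where every term is integrable, and then let $n\to\infty$ using the monotone and dominated convergence theorems, the finiteness of $\mE\int_0^t\|\pphi\|^q\,d\xi$ guaranteeing the passage to the limit.
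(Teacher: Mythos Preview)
Your argument is correct and follows the same overall strategy as the paper: apply It\^o's formula to $\|\vX_t\|^q$, drop the stochastic integral by the martingale property, and control the remaining drift $\tfrac{q(q-1)}{2}\,\mE\int_0^t\|\vX_s\|^{q-2}\|\pphi(s)\|^2\,ds$ through a combination of Young's inequality, H\"older in time, and the Burkholder--Davis--Gundy inequality. The only difference is the order of these last three ingredients. The paper first applies H\"older's inequality \emph{in $\Omega$} to split $\mE[\|\vX_\tau\|^{q-2}\|\pphi(\tau)\|^2]$ into $(\mE[\|\vX_\tau\|^q])^{(q-2)/q}(\mE[\|\pphi(\tau)\|^q])^{2/q}$, then invokes BDG on the first factor, and finally uses Young's inequality on the resulting product; you instead apply Young's inequality \emph{pointwise} to the integrand $\|\vX_s\|^{q-2}\|\pphi(s)\|^2$ before taking expectations, and only then call BDG on $\mE[\|\vX_s\|^q]$. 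Your route is marginally cleaner because it bypasses the H\"older step in probability space; the price is that the second term of your final bound comes out as $(q-1)$ rather than $(q-1)C_b$, so matching the stated constant exactly requires the mild observation $C_b\ge 1$. Your localization remark for the vanishing of the stochastic integral is a point the paper glosses over.
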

\begin{proof}
	The proof of \eqref{ito2} can be found in \cite{Prato1}. Below we only give a proof for \eqref{ito4}, which is based on the It\^o formula and Burkholder-Davis-Gundy inequality. 
	
	By It\^o's formula, we have
	
	\begin{align}\label{eq28}
	\mE\bigl[\|\vX_t\|^q_{\vL^2}\bigr] &\leq q\mE\biggl[\int_{0}^t \|\vX_{\tau}\|^{q-2}_{\vL^2} \bigl(\vX_{\tau}\,, \pphi(\tau)\bigr)\, dW(\tau)\biggr] \\\nonumber
	&\qquad+ \frac12 q(q-1) \mE\biggl[\int_{0}^t \|\vX_{\tau}\|^{q-2}_{\vL^2}\|\pphi(\tau)\|^2_{\vL^2}\, d\tau\biggr].
	\end{align}
	
	The expectation of the first term on the right side of \eqref{eq28}  vanishes due to the martingale property of It\^o integrals. Therefore, we obtain
	\begin{align}\label{eq29}
	\nonumber	\mE\bigl[\|\vX_t\|^q_{\vL^2}\bigr] &\leq \frac12 q(q-1) \int_{0}^t \mE\bigl[\|\vX_{\tau}\|^{q-2}_{\vL^2}\|\pphi(\tau)\|^2_{\vL^2}\bigr]\, d\tau\\\nonumber
	&= \frac12 q(q-1)\int_{0}^t \mE\biggl[\biggl\|\int_{0}^{\tau}\pphi(\xi)\, dW(\xi)\biggr\|^{q-2}_{\vL^2}\|\pphi(\tau)\|^2_{\vL^2}\biggr]\, d\tau\\
	&\leq \frac12 p(p-1)\int_{0}^t\biggl(\mE\biggl[\biggl\|\int_{0}^{\tau}\pphi(\xi)\, dW(\xi)\biggr\|^{\alpha(q-2)}_{\vL^2}\biggr]\biggr)^{\frac{1}{\alpha}} \bigl(\mE\bigl[\|\pphi(\tau)\|^{2\beta}_{\vL^2}\bigr]\bigr)^{\frac{1}{\beta}}   \, d\tau\\\nonumber
	&\leq \frac12 q(q-1)\int_{0}^t\biggl(\mE\biggl[\max_{0\leq \tau \leq t}\biggl\|\int_{0}^{\tau}\pphi(\xi)\, dW(\xi)\biggr\|^{\alpha(q-2)}_{\vL^2}\biggr]\biggr)^{\frac{1}{\alpha}} \bigl(\mE\bigl[\|\pphi(\tau)\|^{2\beta}_{\vL^2}\bigr]\bigr)^{\frac{1}{\beta}}   \, d\tau.
	\end{align}
	We have used H\"older's inequality with $\frac{1}{\alpha} + \frac{1}{\beta} = 1$
	to obtain the second inequality.
	
	Next, applying the Burkholder-Davis-Gundy inequality to the last line of \eqref{eq29}, we get
	\begin{align*}
	\mE\bigl[\|\vX_t\|^q_{\vL^2}\bigr] &\leq \frac12 q(q-1)C_b\int_{0}^t\biggl(\mE\biggl[\biggl(\int_{0}^{t}\|\pphi(\xi)\|^2_{\vL^2}\, d\xi\biggr)^{\frac{\alpha(q-2)}{2}}\biggr]\biggr)^{\frac{1}{\alpha}} \bigl(\mE\bigl[\|\pphi(\tau)\|^{2\beta}_{\vL^2}\bigr]\bigr)^{\frac{1}{\beta}}   \, d\tau.
	\end{align*}
	
	Setting $\alpha = \frac{q}{q-2}, \beta = \frac{q}{2}$ and using Young's inequality with the conjugate 
	pair $\frac{q-2}{q}$ and $\frac{2}{q}$, we obtain 
	\begin{align*}
	\mE\bigl[\|\vX_t\|^q_{\vL^2}\bigr] &\leq \frac{C_b}{2} q(q-1)\int_{0}^t\biggl(\mE\biggl[\biggl(\int_{0}^{t}\|\pphi(\xi)\|^2_{\vL^2}\, d\xi\biggr)^{\frac{q}{2}}\biggr]\biggr)^{\frac{q-2}{q}} \bigl(\mE\bigl[\|\pphi(\tau)\|^{q}_{\vL^2}\bigr]\bigr)^{\frac{2}{q}}   \, d\tau\\\nonumber
	&\leq \frac{1}{2}q(q-1)C_b t \mE\biggl[\biggl(\int_{0}^t\|\pphi(\tau)\|^2_{\vL^2}\, d\tau\biggr)^{\frac{q}{2}}\biggr]\frac{q-2}{q}  \\\nonumber
	&\qquad\qquad\qquad+ \frac12 q(q-1)C_b \int_{0}^t \frac{\mE\bigl[\|\pphi(\tau)\|^q_{\vL^2}\bigr]}{q/2}\, d\tau\\\nonumber
	&\leq \biggl(\frac{1}{2}(q-1)(q-2)t^{\frac{q}{2}} + (q-1)\biggr)C_b \mE\biggl[\int_{0}^t \|\pphi(\tau)\|^q_{\vL^2}\, d\tau\biggr].
	\end{align*}
	The proof is complete.
\end{proof}

\smallskip

Finally, we recall the following property of the $\mathbb{R}$-valued Wiener process:
\begin{align}\label{mean_wiener}
\mE\Bigl[|W(t) - W(s)|^{2m}\Bigr] \leq C_m|t-s|^{m}\qquad\forall m \in \mathbb{N}.
\end{align}
When  $m=1$, the inequality becomes an equality with $C_m =1$. We refer the reader to \cite{Ichikawa} 
for its generalization to infinite-dimensional Wiener processes.


\subsection{Variational formulation of problem \eqref{eq1.1}} \label{sec-2.2}
We now recall the variational solution concept for \eqref{eq1.1} and refer the reader to \cite{Chow07,Prato1} for a proof of its existence and uniqueness.

\begin{definition}\label{def2.1} 
	Given $(\Omega,\cF, \{\cF_t\},\mP)$, let $W$ be an ${\mathbb R}$-valued Wiener process on it. 
	Suppose ${\bf u}_0\in L^2(\Omega, {\mathbb V})$ and $\vf \in L^2(\Ome;L^2((0,T);L^2_{per}(D)))$.
	An $\{\cF_t\}$-adapted stochastic process  $\{{\bf u}(t) ; 0\leq t\leq T\}$ is called
	a variational solution of \eqref{eq1.1} if ${\bf u} \in  L^2\bigl(\Omega; C([0,T]; {\mathbb V})) 
	\cap L^2\bigl(\Ome;0,T;\vH^2_{per}(D)\bigr)$,
	and satisfies $\mP$-a.s.~for all $t\in (0,T]$
	\begin{align}\label{eq2.8a}
	\bigl({\bf u}(t),  {\bf v} \bigr) + \nu\int_0^t  \bigl(\nab {\bf u}(s), \nab {\bf v} \bigr) 
	\,  ds&=({\bf u}_0, {\bf v})+ \int_0^t \big(\vf(s), \vv\big) \, ds \\\nonumber
	& \qquad+  {\int_0^t  \Bigl( {\bf B}\bigl({\bf u}(s)\bigr), {\bf v} \Bigr)\, dW(s)}  \qquad\forall  \, {\bf v}\in {\mathbb V}\, . 
	\end{align}

\end{definition}

Definition \ref{def2.1} only defines the velocity $\mathbf{u}$ for \eqref{eq1.1}, 
its associated pressure $p$ is subtle to define. In that regard we quote the following 
theorem from \cite{Feng1}.

\begin{theorem}\label{thm 2.2}
	Let $\{{\bf u}(t) ; 0\leq t\leq T\}$ be the variational solution of \eqref{eq1.1}. There exists a unique adapted process 
	{$P\in {L^2\bigl(\Omega, L^2(0,T; H^1_{per}(D)/{\mathbb R})\bigr)}$} such that $(\mathbf{u}, P)$ satisfies 
	$\mP$-a.s.~for all $t\in (0,T]$
	\begin{subequations}\label{eq2.100}
		\begin{align}\label{eq2.10a}
		&\bigl({\bf u}(t),  \pphi \bigr) + \nu\int_0^t  \bigl(\nab {\bf u}(s), \nab \pphi \bigr) \, ds
		- \bigl(  \div \pphi, P(t) \bigr) \\
		&=({\bf u}_0, \pphi) + \int_0^t \big(\vf(s), \pphi\big) \, ds 
		+  {\int_0^t  \bigl( {\bf B}\bigl({\bf u}(s)\bigr), \pphi \bigr)\, d\vW(s)}  \,\,\, \forall  \, \pphi\in \vH^1_{per}(D)\, , \nonumber \\ 
		&\bigl(\div {\bf u}, q \bigr) =0 \qquad\forall \, q\in L^2_0(D) := \{ q \in L^2_{per}(D):\, (q,1) = 0\}\,  .  \label{eq2.10b}
		\end{align}
	\end{subequations}
\end{theorem}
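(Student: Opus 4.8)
The plan is to reconstruct the pressure $P(t)$ pointwise in $(\omega,t)$ from the velocity equation \eqref{eq2.8a} by a (stochastic) de Rham / Helmholtz-decomposition argument. The essential point is that, because the stochastic integral $\int_0^t \vB(\vu)\,dW$ has only H\"older-continuous (not differentiable) paths in $t$, I would \emph{not} try to produce an instantaneous pressure; instead I would work directly with the time-integrated formulation \eqref{eq2.10a}, so that the object $P(t)$ that emerges is the time-integrated pressure, which is precisely what gains the spatial $H^1_{per}(D)/\mathbb{R}$ regularity claimed.

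First I would fix $t\in(0,T]$ and, $\mP$-a.s., introduce the residual functional $F(t)\in(\vH^1_{per}(D))^*$ defined by
\[
\langle F(t),\pphi\rangle := (\vu(t),\pphi) + \nu\int_0^t(\nab\vu(s),\nab\pphi)\,ds - (\vu_0,\pphi) - \int_0^t(\vf(s),\pphi)\,ds - \int_0^t(\vB(\vu(s)),\pphi)\,dW(s).
\]
The key observation is that $F(t)$ is in fact represented by an $\vL^2_{per}(D)$ function. Integrating the viscous term by parts (periodic boundary conditions, no boundary contribution) gives $\nu\int_0^t(\nab\vu,\nab\pphi)\,ds = -\nu\int_0^t(\Delta\vu,\pphi)\,ds$, and pulling the remaining time and stochastic integrals inside the $L^2$ inner product yields $\langle F(t),\pphi\rangle=(\mathbf{g}(t),\pphi)$ with
\[
\mathbf{g}(t) := \vu(t) - \nu\int_0^t\Delta\vu(s)\,ds - \vu_0 - \int_0^t\vf(s)\,ds - \int_0^t\vB(\vu(s))\,dW(s)\ \in\ \vL^2_{per}(D),
\]
where membership in $\vL^2_{per}(D)$ holds a.s.\ because $\vu\in L^2(\Omega;C([0,T];\mathbb{V}))\cap L^2(\Omega;L^2(0,T;\vH^2_{per}(D)))$, $\vf\in L^2(\Omega;L^2(0,T;\vL^2_{per}(D)))$, and the It\^o integral is $\vL^2_{per}(D)$-valued by \eqref{ito2}.

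Next, by the definition of the variational solution \eqref{eq2.8a}, $\langle F(t),\pphi\rangle=0$ for every divergence-free $\pphi\in\mathbb{V}$; that is, $(\mathbf{g}(t),\pphi)=0$ for all $\pphi\in\mathbb{V}$. Since $\mathbb{V}$ is dense in $\mathbb{H}$ in the $\vL^2$-norm, $\mathbf{g}(t)\perp\mathbb{H}$, so by the Helmholtz decomposition $\vL^2_{per}(D)=\mathbb{H}\oplus\nabla\bigl(H^1_{per}(D)/\mathbb{R}\bigr)$ recalled above there is a unique $\xi(t)\in H^1_{per}(D)/\mathbb{R}$ with $\mathbf{g}(t)=\nabla\xi(t)$. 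Setting $P(t):=-\xi(t)$ and integrating by parts, $(\mathbf{g}(t),\pphi)=(\nabla\xi(t),\pphi)=-(\xi(t),\div\pphi)=(\div\pphi,P(t))$, which is precisely \eqref{eq2.10a}; equation \eqref{eq2.10b} is immediate since $\vu(t)\in\mathbb{V}$ is divergence-free.

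Uniqueness of $P(t)$ in $H^1_{per}(D)/\mathbb{R}$ follows from injectivity of $\nabla$ on the quotient (equivalently, surjectivity of $\div:\vH^1_{per}(D)\to L^2_0(D)$): if $(\div\pphi,P)=0$ for all $\pphi$ then $P\perp L^2_0(D)$, hence $P$ is constant and thus zero in the quotient. The remaining, and I expect the most delicate, points are the \emph{adaptedness} and \emph{joint measurability} of $P$ together with its \emph{integrability}. For the former, I would note that $P(t)=-\xi(t)$ is obtained from $\mathbf{g}(t)$ by the bounded linear Helmholtz/Poisson solution operator of \eqref{poisson}; since each term defining $\mathbf{g}(t)$ is $\{\cF_t\}$-adapted and jointly measurable in $(\omega,t)$, so is $P$. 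For integrability, the Poincar\'e--Wirtinger inequality gives $\|P(t)\|_{H^1_{per}/\mathbb{R}}\le C\|\nabla\xi(t)\|=C\|\mathbf{g}(t)\|$, and bounding each term of $\mathbf{g}(t)$ --- using the solution regularity for the first three, Cauchy--Schwarz in time for the deterministic integrals, and the It\^o isometry \eqref{ito2} together with the linear growth \eqref{eq2.6b} for the stochastic integral --- yields $\mE\int_0^T\|P(t)\|_{H^1_{per}/\mathbb{R}}^2\,dt<\infty$, i.e.\ $P\in L^2\bigl(\Omega;L^2(0,T;H^1_{per}(D)/\mathbb{R})\bigr)$.
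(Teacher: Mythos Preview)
The paper does not supply its own proof of this theorem; it simply quotes the result from \cite{Feng1}, so there is no in-paper argument to compare against. Your de Rham / Helmholtz-decomposition argument is correct and is the standard route (and essentially the one used in \cite{Feng1}, and earlier in \cite{LRS03} for the Navier--Stokes case): form the $\vL^2_{per}$-valued residual $\mathbf{g}(t)$ using the $\vH^2_{per}$-regularity of the variational solution so that the viscous term can be integrated by parts, observe that $\mathbf{g}(t)$ annihilates $\mathbb{V}$ and hence $\mathbb{H}$ by density, and then read off $P(t)$ as (minus) the potential in the Helmholtz decomposition of $\mathbf{g}(t)$. Your handling of uniqueness via surjectivity of $\div$, of adaptedness via the bounded linear Helmholtz solution operator applied to an adapted process, and of the $L^2\bigl(\Omega;L^2(0,T;H^1_{per}(D)/\mathbb{R})\bigr)$ bound via Poincar\'e--Wirtinger together with the It\^o isometry \eqref{ito2} and the linear growth \eqref{eq2.6b} is also the expected and correct route.
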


System \eqref{eq2.100} can be regarded as a mixed formulation for the stochastic Stokes system 
\eqref{eq1.1}, where the (time-averaged) pressure $P$ is defined.
Below, we also define another time-averaged ``pressure" 
\begin{align}\label{eq_R}
	R(t) := P(t) - \int_0^t\xi(s)\, dW(s),
\end{align}
  where we
use the Helmholtz decomposition ${\bf B}(\vu(t)) = \pmb{\eta}(t) + \nabla \xi(t)$, where 
$\xi\in H^1_{per}(D)/\mathbb{R}$  ${\mathbb P}\mbox{-a.s.}$ such that 
\begin{equation}\label{eq2.8f} 
\bigl(\nabla \xi(t), \nabla \phi \bigr) =  \bigl( {\bf B}(\vu(t)) , \nabla \phi 
\bigr)\qquad \forall\, \phi \in H^1_{per}(D)\, .
\end{equation}
The time averaged ``pressure" $\{R(t); 0\leq t\leq T\}$ will also be a target process to be approximated in our numerical methods in Section \ref{section_optimal}. 

The following stability estimate for the velocity $\vu$ was proved in \cite{Breit,CP12}. 

\begin{lemma}\label{stability_pde}
	 Let $\vu$ be solution defined in \eqref{eq2.8a}. Assume that $\vu_0 \in L^r\bigl(\Ome; \mV\bigr)$ for some $r \geq 2$. Then we have
	\begin{align}
	\mE\biggl[\Bigl(\sup_{0\leq t \leq T} \|\nab\vu(t)\|^2_{\vL^2} + \int_0^T \nu\|\nab^2\vu(t)\|^2_{\vL^2}\, dt\Bigr)^{\frac{r}{2}} \biggr] \leq C_r \mE\Bigl[\|\nab \vu_0\|^r_{\vL^2}\Bigr].
	\end{align}

	
\end{lemma}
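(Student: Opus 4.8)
The plan is to derive an energy identity by applying the Itô formula to the functional $t \mapsto \|\nab \vu(t)\|_{\vL^2}^2$ and then raise it to the power $r/2$, taking a supremum in time before taking expectations. The natural test function in the variational formulation \eqref{eq2.8a} is $\vv = \vA \vu = -\vP_{\mathbb H}\Delta \vu$, which via integration by parts turns the viscous term into $\nu\|\nab^2 \vu\|^2_{\vL^2}$ (after using periodicity and the divergence-free constraint to control the pressure contribution) and converts the time-derivative term into $\tfrac12 \tfrac{d}{dt}\|\nab \vu\|^2_{\vL^2}$. First I would write the resulting Itô energy identity in differential form, schematically
\begin{align*}
\tfrac12 d\|\nab \vu\|^2_{\vL^2} + \nu\|\nab^2 \vu\|^2_{\vL^2}\, dt
&= \big(\vf, \vA\vu\big)\, dt + \big(\vB(\vu), \vA\vu\big)\, dW
+ \tfrac12 \|\vP_{\mathbb H}\nab \vB(\vu)\|^2_{\vL^2}\, dt,
\end{align*}
where the last term is the Itô correction (quadratic variation) coming from the stochastic integral.

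Next I would integrate in time from $0$ to an arbitrary $t$, take the supremum over $t\in[0,T]$, raise both sides to the power $r/2$, and take expectations. The deterministic terms are handled by Hölder and Young inequalities: the forcing term $(\vf,\vA\vu)$ is bounded using $\vf\in\vH^{-1}$ duality against $\nab^2\vu$ and absorbed into the $\nu\|\nab^2\vu\|^2$ dissipation, while the Itô-correction term is controlled by the linear growth bound \eqref{eq2.6b} on $\vB$ together with a Poincaré-type inequality, producing a term of the form $C\int_0^t(\|\nab\vu\|^2+1)\,ds$. The key stochastic step is the martingale term: here I would apply the Burkholder-Davis-Gundy inequality (the lemma recalled above) with exponent $q=r/2$ to
\begin{align*}
\mE\Big[\sup_{0\le t\le T}\Big|\int_0^t \big(\vB(\vu(s)),\vA\vu(s)\big)\,dW(s)\Big|^{r/2}\Big]
\le C_b\,\mE\Big[\Big(\int_0^T \big(\vB(\vu),\vA\vu\big)^2\,ds\Big)^{r/4}\Big],
\end{align*}
then bound the integrand by $\|\vB(\vu)\|\,\|\vA\vu\| \le \varepsilon\|\nab^2\vu\|^2 + C_\varepsilon(\|\nab\vu\|^2+1)$ so that, after another Young inequality applied to the outer power $r/4$, a small multiple of $\mE[(\int_0^T\nu\|\nab^2\vu\|^2)^{r/2}]$ can be absorbed into the left-hand side.

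After absorption, the remaining right-hand side is of the form $C_r\mE[\|\nab\vu_0\|^r_{\vL^2}] + C_r + C_r\mE[(\int_0^T\|\nab\vu\|^2\,ds)^{r/2}]$, and I would close the estimate with a Gronwall argument (the integral form, applied to $\Phi(t):=\mE[\sup_{0\le s\le t}\|\nab\vu(s)\|^r_{\vL^2}]$) to remove the last term. The main obstacle I anticipate is the absorption bookkeeping: one must choose the Young/BDG splitting parameters so that the coefficient of $\mE[(\int_0^T\nu\|\nab^2\vu\|^2)^{r/2}]$ generated by the martingale term is strictly less than the coefficient on the left, which requires applying Young's inequality to the \emph{outer} exponent $r/4$ rather than inside the time integral — a subtlety that is easy to get wrong. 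A secondary technical point is justifying the Itô formula for $\|\nab\vu\|^2_{\vL^2}$ and the admissibility of $\vA\vu$ as a test function, which rests on the $\vH^2_{per}$-regularity already built into the variational solution class of Definition \ref{def2.1}; I would either invoke a standard Galerkin/mollification approximation or cite the corresponding result in \cite{Breit,CP12}.
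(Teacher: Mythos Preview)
The paper does not prove this lemma; it simply attributes the result to \cite{Breit,CP12} and quotes it without reproducing the argument. Your proposed strategy---apply It\^o's formula to $\|\nab\vu\|^2_{\vL^2}$ (equivalently, test with $\vA\vu$), integrate in time, raise to the power $r/2$, control the martingale term via the Burkholder--Davis--Gundy inequality, absorb a small multiple of the dissipation, and close with Gronwall---is exactly the standard route taken in those references, and the outline is sound.

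One point to watch: the It\^o correction you write as $\tfrac12\|\vP_{\mH}\nab\vB(\vu)\|^2_{\vL^2}$ (more precisely $\tfrac12\|\vA^{1/2}\vP_{\mH}\vB(\vu)\|^2_{\vL^2}$) cannot be controlled using only the $\vL^2$ linear-growth bound \eqref{eq2.6b}; you need $\vB$ to map into $\vH^1_{per}$ (or $\mV$) with linear growth in that norm, an assumption that is implicit in \cite{Breit,CP12} but not stated in \eqref{eq2.6}. Without it the step ``controlled by the linear growth bound \eqref{eq2.6b} together with a Poincar\'e-type inequality'' does not go through, since Poincar\'e runs the wrong way. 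Apart from this additional regularity hypothesis on $\vB$, your sketch is correct.
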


Next, we introduce the H\"older continuity estimates for the variational solution $\vu$, 
a similar proof can be found in \cite{Breit, CP12} for the stochastic Navier-Stokes equations. 
we provide a proof below for completeness.

\begin{lemma}\label{lemma2.3}
	Suppose ${\bf u}_0 \in L^q\bigl(\Omega; {\mathbb V}\bigr)$ and $\vf \in L^{q}(\Ome; C^{\frac{1}{2}}(0,T;\vH^{-1}(D)))$, $\, \forall q \geq 2$. For $0 < \gamma < \frac12$, there exists a constant $C \equiv C(D_T, \vu_0)>0$, such that the variational solution to problem \eqref{eq1.1} satisfies
	for $s,t \in [0,T]$
		\begin{align}
\label{eq2.20a}		 {\mathbb E}\bigl[\| {\bf u}(t)-{\bf u}(s)\|^{q}_{\mV} \bigr] 
		\leq C|t-s|^{\gamma q}.
		\end{align}
\end{lemma}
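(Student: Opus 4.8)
The plan is to work from the strong It\^o form of the velocity equation. Projecting \eqref{eq1.1a} onto $\mH$ with the Helmholtz projection $\vP_\mH$ eliminates the pressure gradient, and since $\vu$ is divergence free (so that $\vA\vu=-\vP_\mH\Delta\vu$), for $0\le s<t\le T$ one obtains the representation
\begin{equation*}
\vu(t)-\vu(s)=-\nu\int_s^t \vA\vu(\tau)\,d\tau+\int_s^t \vP_\mH\vf(\tau)\,d\tau+\int_s^t \vP_\mH\vB(\vu(\tau))\,dW(\tau).
\end{equation*}
I would estimate the $q$-th moment of the increment by treating these three contributions separately, drawing the spatial regularity of $\vu$ from Lemma \ref{stability_pde} and handling the martingale term with Lemma \ref{lemma23} together with the Burkholder--Davis--Gundy inequality. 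Throughout I would use that, on the zero-mean periodic space, $\|\cdot\|_{\mV}$ is equivalent to $\|\nab\,\cdot\|$.

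For the viscous drift term, H\"older's inequality in time produces a factor $|t-s|^{1/2}$ multiplying $\bigl(\int_s^t\|\vA\vu\|^2\,d\tau\bigr)^{1/2}$, whose $q$-th moment is exactly what the $L^2_t$-in-time control of $\nab^2\vu$ in Lemma \ref{stability_pde} delivers. The force term is controlled by pairing against the $C^{\frac12}(0,T;\vH^{-1}(D))$ regularity of $\vf$, giving a factor $|t-s|$ in the dual norm $\mV'$. The stochastic term is the one that fixes the rate: by the Burkholder--Davis--Gundy inequality its $q$-th moment is bounded by $\mE\bigl[(\int_s^t\|\vP_\mH\vB(\vu)\|^2\,d\tau)^{q/2}\bigr]$, and the linear growth bound \eqref{eq2.6b} combined with the uniform-in-time control of $\sup_t\|\vu\|$ from Lemma \ref{stability_pde} converts this into $C|t-s|^{q/2}$. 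Together these give the half-order scaling that underlies \eqref{eq2.20a}.

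The hard part is that the clean bounds above are naturally available only in the $\mH$-based (and, for the force, the $\mV'$-based) norm, not directly in the $H^1$-based norm $\mV$: since $\vB$ is assumed merely to map $\vL^2_{per}(D)\to\vL^2_{per}(D)$ in \eqref{eq2.6}, the same Burkholder--Davis--Gundy computation cannot be run in $\mV$, as that would require $\vB(\vu)\in\mV$. I would therefore establish the half-order estimate first in the weaker norm and then lift it to $\mV$ by interpolating against the uniform-in-time a priori bound $\sup_t\|\nab\vu(t)\|$ supplied by Lemma \ref{stability_pde}. The loss incurred in this interpolation is precisely what prevents reaching the endpoint exponent and forces the sub-optimal range $0<\gamma<\tfrac12$; the delicate bookkeeping is to carry the high moments through the interpolation so that the final constant in \eqref{eq2.20a} depends only on the data $\vu_0,\vf,\nu,T,D$ and not on the particular times $s,t$. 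Once \eqref{eq2.20a} holds for every $q\ge2$, it feeds directly into Kolmogorov's criterion (Theorem \ref{kolmogorov}) to yield the corresponding pathwise H\"older continuity of $\vu$ in $\mV$.
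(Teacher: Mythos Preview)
Your plan has a genuine gap at the lifting step. From the strong It\^o form you can indeed obtain
\[
\mE\bigl[\|\vu(t)-\vu(s)\|_{\vL^2}^q\bigr]\le C|t-s|^{q/2}
\]
together with $\mE\bigl[\sup_t\|\vu(t)\|_{\mV}^q\bigr]\le C$ from Lemma \ref{stability_pde}. But interpolating a H\"older-in-$\vL^2$ bound against a merely bounded-in-$\mV$ bound does \emph{not} produce any H\"older exponent in $\mV$: real interpolation of the pair $(\vL^2,\mV)$ only yields intermediate spaces $H^\theta$ with $\theta<1$, with exponent $(1-\theta)/2$, and $\theta=1$ is unreachable. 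If instead you try to interpolate against the $L^2$-in-time $\vH^2$ control of Lemma \ref{stability_pde}, you would need pointwise-in-time $\vH^2$ bounds (not available), and even granting those you would at best obtain $\gamma<\tfrac14$, not $\gamma<\tfrac12$. Likewise, the drift term $\int_s^t\vA\vu\,d\tau$ cannot be bounded directly in $\mV$ from the available regularity, since $\|\vA^{1/2}\!\int_s^t\vA\vu\|_{\vL^2}$ calls for $\vH^3$ control.

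The paper avoids this obstruction by working with the \emph{mild} formulation
\[
\vu(t)=e^{-t\vA}\vu_0+\int_0^t e^{-(t-\tau)\vA}\vP_\mH\vB(\vu(\tau))\,dW(\tau),
\]
and exploiting the analytic smoothing estimates $\|\vA^a e^{-\tau\vA}\|_{\mathcal{L}(\vL^2)}\le C\tau^{-a}$ and $\|\vA^{-b}(I-e^{-\tau\vA})\|_{\mathcal{L}(\vL^2)}\le C\tau^{b}$. Writing $\vu(t_1)-\vu(t_2)$ as a semigroup increment on $\vu_0$ plus a difference of stochastic convolutions, one factors the $\vA^{1/2}$ coming from the $\mV$-norm into these operator bounds, so the smoothing of $e^{-(t_2-\tau)\vA}$ supplies the missing spatial derivative on $\vP_\mH\vB(\vu)$ (which is only in $\vL^2$), while the $\vA^{-b}(I-e^{-(t_1-t_2)\vA})$ factor yields the time increment. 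This trade-off between the singularity of the convolution kernel and the H\"older gain is precisely what forces $\gamma<\tfrac12$, and it is the mechanism your interpolation idea is missing.
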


\begin{proof}
	Following \cite{CP12,Breit}, we have that the mild solution of \eqref{eq1.1} can be represented as follow:
	\begin{align}
		\vu(t) = e^{-t\vA} \vu_0 + \int_0^t e^{(t-s)\vA}\vP_{\mH} \vB(\vu(s))\, d W(s).
	\end{align}
	For $t_2 < t_1$, write $\vu(t_1) - \vu(t_2) = {\tt I + II}$ where
	\begin{align}
		{\tt I} &= \Bigl(e^{-t_1\vA} - e^{-t_2\vA}\Bigr)\vu_0, \\\nonumber
		{\tt II}&= \int_0^{t_1} e^{(t_1-s)\vA}\vP_{\mH} \vB(\vu(s))\, d\vW(s) - \int_0^{t_2} e^{(t_2-s)\vA}\vP_{\mH} \vB(\vu(s))\, d W(s).
	\end{align}
	By the standard estimates of semigroup theory, we have
	\begin{align*}
	\|\vA^a e^{-t\vA}\| \leq C t^{-a}, \qquad \|\vA^{-b}({\bf I} - e^{-t\vA})\| \leq C t^b.
	\end{align*}
	Thus,
	 \begin{align}\label{eq2.21}
	 	\|{\tt I}\|_{\mV} &= \|e^{-t_2\vA}(e^{-(t_1-t_2)\vA} - {\bf I})\vA^{\frac12}\vu_0\|_{\vL^2}\\\nonumber
	 	&\leq C(t_1-t_2)^{\gamma} \|\nab\vu_0\|_{\vL^2}. 
	 \end{align}
	 Therefore, $\displaystyle \mE[\|{\tt I}\|^q_{\mV}] \leq C(t_1-t_2)^{\gamma q}\mE\bigl[\bigl\|\vu_0\bigr\|^q_{\mV}\bigr]$.
	 
	 Next, we can write 
	 \begin{align}
	 	{\tt II} &= \int_0^{t_2} \bigl(e^{-(t_1-s)\vA} - e^{-(t_2-s)\vA}\bigr)\vB(\vu(s))\, dW(s) \\\nonumber
	 	&\qquad+ \int_{t_2}^{t_1} e^{-(t_1 - s)\vA}\vB(\vu(s))\, dW(s)  
	 	=: {\tt II_a + II_b}.
	 \end{align}
	 
	 By the Burkholder-Davis-Gundy inequality and the fact that $\|\cdot\|_{\mV} = \|\vA^{1/2}\cdot\|_{\vL^2}$, we obtain
	 \begin{align}\label{eq2.23}
	 	\bigl(\mE\bigl[\bigl\|{\tt II}_a\bigr\|^q_{\mV}\bigr]\bigr)^{1/q} &\leq C\biggl(\int_0^{t_2} \biggl(\mE\biggl[\bigl\|\bigl(e^{-(t_1-s)\vA} - e^{-(t_2-s)\vA}\bigr)\vB(\vu(s))\bigr\|^q_{\mV}\biggr]\biggr)^{2/q}\, ds\biggr)^{1/2} \\\nonumber
	 	&\leq C \biggl(\int_{0}^{t_2} \bigl\|\vA^{(1-\varepsilon)}e^{-(t_2-s)\vA}\bigr\|^2_{\mathcal{L}(\vL^2)}\\\nonumber
	 	&\qquad\times\bigl\|\vA^{-(1-\varepsilon)}\bigl(e^{-(t_1-t_2)\vA} - {\bf I}\bigr)\bigr\|^2_{\mathcal{L}(\vL^2)}\Bigl(\mE\bigl[\|\vu(s)\|^q_{\mV}\bigr]\Bigr)^{2/q}\, ds\biggr)^{1/2}\\\nonumber
	 	&\leq C(t_1-t_2)^{1-\varepsilon}\sup_{0\leq s \leq T}\Bigl(\mE\bigl[\|\vu(s)\|^q_{\mV}\bigr]\Bigr)^{1/q}\biggl(\int_{0}^{t_2}\frac{ds}{(t_2-s)^{2(1-\varepsilon)}} \biggr)^{1/2}\\\nonumber
	 	&\leq C(t_1-t_2)^{1-\varepsilon},
	 \end{align}
	 where $\frac12<\varepsilon <1$, and $\displaystyle\Bigl(\mE\bigl[\|\vu(s)\|^q_{\mV}\bigr]\Bigr)^{1/q} <C_{q}$ by Lemma \ref{stability_pde}.
	 
	 To estimate {\tt II}$_b$, we use Lemma \ref{lemma23} (ii) and then also apply Lemma \ref{stability_pde} to obtain:
	 \begin{align}\label{eq2.24}
	 	\mE\bigl[\bigl\|{\tt II_b}\bigr\|^q_{\mV}\bigr] &\leq C_q\int_{t_2}^{t_1} \mE\Bigl[\bigl\|e^{-(t_1-s)\vA}\vB(\vu(s))\bigr\|^q_{\mV}\Bigr]\, ds\\\nonumber
	 	&\leq C_q(t_1-t_2)\sup_{0\leq s \leq T}\mE\bigl[\|\vu(s)\|^q_{\mV}\bigr].
	 \end{align}
	 
	 Finally, combining \eqref{eq2.21}, \eqref{eq2.23} and \eqref{eq2.24} we obtain
	 \begin{align}
	 	\mE\bigl[\|\vu(t_1) - \vu(t_2)\|^q_{\mV}\bigr] \leq C(t_1 -t_2)^{\gamma q},
	 \end{align}
	 where $0< \gamma < \frac12$. The proof is complete.
\end{proof}

\begin{remark} 
	  We note that due to the obstruction of nonlinearity,  the estimate obtained in \cite{Breit} requires higher regularity of $\vu_0$ and $\vB \in \mathcal{L}(\vL^2_{per}, \vH^2_{per})$ to obtain the optimal order $\gamma$. On the other hand,  the estimate of \cite{CP12} is limited to the order $\frac{\gamma}{2}$ under the same assumptions as in Lemma \ref{lemma2.3} above.   
\end{remark}

\section{Semi-discretization in time}\label{section_semi} In this section, we consider the implicit Euler-Maruyama scheme for the time discretization of \eqref{eq2.8a}. 

\subsection{Formulation of the scheme and stability estimates}
We recall the Euler-Maruyama scheme for problem \eqref{eq1.1} in the following algorithm (cf. \cite{CHP12,Feng,Feng1}). Let $I_k: = \{t_n\}_{n=1}^M$ be a uniform mesh of the interval $[0,T]$ 
with the time step-size $k = \frac{T}{M}$. Note that $t_0 = 0$ and $t_M = T$.

\smallskip
\noindent
\textbf{Algorithm 1} 

Let $\vu^0 = \vu_0$ be a given $\mV$-valued random variable. Find the pair $\{\vu^{n+1}, p^{n+1}\} \in \mV\times L^2_{per}$ recursively such that $\mP$-a.s. 
\begin{subequations}\label{eulerscheme}
	\begin{align}
	\label{eulerscheme1}	\bigl(\vu^{n+1} - \vu^n, \pphi\bigr) + \nu k \bigl(\nab\vu^{n+1}, \nab\pphi\bigr) &- k\bigl(p^{n+1}, \div \pphi\bigr) \\\nonumber &= k\bigl(\vf^{n+1},\pphi\bigr) +\bigl(\vB(\vu^n)\Delta W_{n+1},\pphi\bigr),\\
	\label{eulerscheme2}	\bigl(\div \vu^{n+1},\psi\bigr) &=0 
	\end{align}
\end{subequations}
for all $\pphi \in \vH^1_{per}(D)$ and $\psi \in L^2_{per}(D)$. Where $\displaystyle\vf^{n+1} := \vf(t_{n+1})$.

The following stability estimates for the velocity approximation $\{\vu^n\}$ of Algorithm 1 were proved in 
in \cite[Lemma 3.1]{CP12}.

\begin{lemma}\label{stability_mean}
	Let $\vu_0 \in L^{2^q}(\Ome;\mV)$ for an integer $1 \leq q <\infty$ be given, such that $\mE\bigl[\|\vu_0\|^{2^q}_{\mV}\bigr] \leq C$. Then there exists a constant $C_{T,q} = C(T, q, \vu_0)$ such that the following estimations hold:
	\begin{enumerate}[\rm (i)]
		\item\qquad $\displaystyle\mE\biggl[\max_{1\leq n \leq M}\|\vu^n\|^{2^q}_{\mV} + \nu k\sum_{n=1}^M \|\vu^n\|^{2^q-2}_{\mV}\|{\bf A}\vu^n\|^2_{\vL^2}\biggr] \leq C_{T,q}$.
		\item\qquad $\displaystyle\mE\Biggl[\biggl(\sum_{n=1}^M \|\vu^{n} - \vu^{n-1}\|^2_{\mV}\biggr)^q + \biggl(\nu k\sum_{n=1}^M \|{\bf A}\vu^{n}\|^2_{\mV}\biggr)^q\Biggr] \leq C_{T,q}$.
	\end{enumerate}
\end{lemma}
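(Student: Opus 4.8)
The plan is to derive a one-step discrete energy relation in the $\mV$-norm by testing the momentum equation \eqref{eulerscheme1} with the Stokes operator applied to the \emph{new} iterate, $\pphi = \vA\vu^{n+1}$, and then to climb to the $2^q$-th moment by a convexity (power) argument combined with induction on $q$. First I would observe that since $\vA\vu^{n+1}\in\mH$ is divergence free, the pressure term drops, $k(p^{n+1},\div\vA\vu^{n+1})=0$; and since for periodic divergence-free fields $\vA\vu=-\Delta\vu$, integration by parts (no boundary terms) turns the viscous term into $\nu k\|\vA\vu^{n+1}\|^2_{\vL^2}$ and the increment term into $(\nab(\vu^{n+1}-\vu^n),\nab\vu^{n+1})$. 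The polarization identity $(a-b,a)=\tfrac12(\|a\|^2-\|b\|^2+\|a-b\|^2)$ then yields
\begin{align*}
&\tfrac12\bigl(\|\nab\vu^{n+1}\|^2-\|\nab\vu^n\|^2+\|\nab(\vu^{n+1}-\vu^n)\|^2\bigr)+\nu k\|\vA\vu^{n+1}\|^2_{\vL^2}\\
&\qquad= k\bigl(\vf^{n+1},\vA\vu^{n+1}\bigr)+\bigl(\vB(\vu^n)\Delta W_{n+1},\vA\vu^{n+1}\bigr).
\end{align*}

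To reach the $2^q$-th moment I would multiply this relation by the positive weight $\|\nab\vu^{n+1}\|^{2^q-2}$ and use the convexity inequality $a^m-b^m\le m\,a^{m-1}(a-b)$ with $a=\|\nab\vu^{n+1}\|^2$, $b=\|\nab\vu^n\|^2$, $m=2^{q-1}$. Summing over $n$ then telescopes into $\tfrac1m\|\nab\vu^M\|^{2^q}$ and produces \emph{precisely} the weighted dissipation $\nu k\sum_n\|\vu^{n+1}\|^{2^q-2}_{\mV}\|\vA\vu^{n+1}\|^2_{\vL^2}$ appearing in statement (i) (up to the harmless index shift, $\vu^0=\vu_0$ being controlled by hypothesis). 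The $\vf$-term is handled by Young's inequality, splitting off a piece that can be absorbed into the dissipation and a piece bounded by $\|\nab\vu^{n+1}\|^{2^q}$. The argument runs by induction on $q$, the base case $q=1$ being the plain second-moment energy estimate.

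The hard part will be the stochastic term, because the implicit scheme makes both the test function $\vA\vu^{n+1}$ and the weight $\|\nab\vu^{n+1}\|^{2^q-2}$ depend on $\Delta W_{n+1}$, so the It\^o contribution is \emph{not} a martingale increment. I would split $\vu^{n+1}=\vu^n+(\vu^{n+1}-\vu^n)$ in both the weight and the factor $\vA\vu^{n+1}$, isolating the genuinely predictable part $\|\nab\vu^n\|^{2^q-2}\bigl(\vB(\vu^n)\Delta W_{n+1},\vA\vu^n\bigr)$ — a martingale increment that vanishes in expectation and is later amenable to the Burkholder--Davis--Gundy inequality — plus correction terms. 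For the corrections I would substitute the increment $\vu^{n+1}-\vu^n$ from \eqref{eulerscheme1}, and estimate them using the moment bound $\mE[|\Delta W_{n+1}|^{2m}]\le C_m k^m$ from \eqref{mean_wiener}, the linear growth \eqref{eq2.6b} of $\vB$, and Young's inequality to absorb the top-order piece into the LHS dissipation. The essential point is that \eqref{mean_wiener} furnishes the extra powers of $k$ that make the leftover lower-order terms summable and controllable by the induction hypothesis.

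Finally, taking expectations and applying the discrete Gronwall inequality bounds $\mE[\|\nab\vu^m\|^{2^q}]$ together with the summed weighted dissipation uniformly in $m$; moving $\max_{1\le n\le M}$ inside the expectation and applying Burkholder--Davis--Gundy to the martingale part upgrades this to the $\max$-in-time estimate in (i). For (ii), I would raise the summed energy relation to the $q$-th power, take expectation, bound the $\vf$ and noise contributions via Lemma~\ref{lemma23}(ii) and Burkholder--Davis--Gundy together with the already-established part (i), and read off the bounds on $\bigl(\sum_n\|\vu^n-\vu^{n-1}\|^2_{\mV}\bigr)^q$ and $\bigl(\nu k\sum_n\|\vA\vu^n\|^2\bigr)^q$. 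Throughout, the delicate bookkeeping is tracking powers of $k$ in the stochastic corrections, which is exactly where \eqref{mean_wiener} and the Lipschitz/linear-growth assumptions \eqref{eq2.6} are indispensable.
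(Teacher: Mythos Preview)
The paper does not supply its own proof of this lemma; it simply cites \cite[Lemma~3.1]{CP12} and moves on. Your outline is essentially the standard argument used there (and mirrored in \cite{BCP12} for the fully discrete case, which the paper invokes for Lemma~\ref{stability_FEM}): test \eqref{eulerscheme1} with $\vA\vu^{n+1}$, use the polarization identity, climb to higher moments by multiplying with $\|\vu^{n+1}\|_{\mV}^{2^q-2}$ (equivalently, iterate the ``multiply by $\|\cdot\|^2$'' trick the paper itself employs in the proof of Theorem~\ref{theorem_semi}), isolate the martingale part of the noise term via the splitting $\vu^{n+1}=\vu^n+(\vu^{n+1}-\vu^n)$, control the correction pieces with \eqref{mean_wiener} and \eqref{eq2.6}, apply discrete Gronwall, and upgrade to $\max_n$ inside the expectation via Burkholder--Davis--Gundy. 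So your proposal is correct and aligned with the cited proof; there is nothing in the present paper to compare against beyond the reference.
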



Next, we want to derive some high moment stability estimates for the pressure approximation $\{p^n\}$ of Algorithm 1, which plays a crucial role in the error analysis of the full-discrete scheme later. 

\begin{lemma}\label{stability_pressure}
	Let $\{ ({\bf u}^{m+1}, p^{m+1})\}_n$ be generated by {\rm Algorithm 1}. Assume that $\vu_0 \in L^q(\Ome;\mV)$ for $1\leq q < \infty$. Then, there exists a constant $C>0 $ such that 
	\begin{enumerate}[\rm (i)]
		\item if $\vB: \vL^2\rightarrow \mV$, then 
	\begin{align}\label{stability_pressure1}
	\mE\biggl[\biggl(k\sum_{n=1}^M \|\nab p^{n}\|^2_{\vL^2}\biggr)^q\biggr] \leq C_{T,q};
	\end{align}
		\item if $\vB: \vL^2\rightarrow \vH^1_{per}$, then 
			\begin{align}\label{stability_pressure2}
		\mE\biggl[\biggl(k\sum_{n=1}^M \|\nab p^{n}\|^2_{\vL^2}\biggr)^q\biggr] \leq \frac{C_{T,q}}{k^q}.
		\end{align}
	\end{enumerate}
	
\end{lemma}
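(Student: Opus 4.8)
The plan is to solve for the pressure gradient explicitly by projecting the discrete momentum equation onto the space of gradients, and then to estimate the resulting expression using the independence of the Wiener increments from the past together with the velocity stability bounds of Lemma \ref{stability_mean}.

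First I would recover a pointwise-in-$n$ formula for $\nab p^{n+1}$. By Lemma \ref{stability_mean}(i) each iterate satisfies $\vu^n\in\vH^2_{per}(D)$ $\mP$-a.s., so, testing \eqref{eulerscheme1} against all $\pphi\in\vH^1_{per}(D)$ and integrating the viscous term by parts, the weak identity can be recast in the strong form
\begin{align*}
\vu^{n+1}-\vu^n-\nu k\,\Delta\vu^{n+1}+k\,\nab p^{n+1}=k\,\vf^{n+1}+\vB(\vu^n)\,\Delta W_{n+1}\qquad\text{in }\vL^2_{per}(D),
\end{align*}
which in particular forces $\nab p^{n+1}\in\vL^2_{per}(D)$. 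Applying the complementary Helmholtz projection ${\bf I}-\vP_{\mH}$ and using that $\vu^{n+1}-\vu^n\in\mH$ and that $\Delta\vu^{n+1}$ is divergence-free (since $\div\Delta\vu^{n+1}=\Delta\,\div\vu^{n+1}=0$), both velocity contributions are annihilated while $\nab p^{n+1}$ is left invariant. Writing the Helmholtz decomposition $\vB(\vu^n)=\pmb{\eta}^n+\nab\xi^n$ (cf. \eqref{eq2.8f}), this gives
\begin{align*}
\nab p^{n+1}=({\bf I}-\vP_{\mH})\vf^{n+1}+\frac1k\,\nab\xi^n\,\Delta W_{n+1},
\end{align*}
so that, since the projection is a contraction on $\vL^2_{per}(D)$ and $\|\nab\xi^n\|\le\|\vB(\vu^n)\|\le C(1+\|\vu^n\|)$ by \eqref{eq2.6b},
\begin{align*}
\|\nab p^{n+1}\|^2\le 2\,\|\vf^{n+1}\|^2+\frac{2}{k^2}\,|\Delta W_{n+1}|^2\,\|\nab\xi^n\|^2.
\end{align*}

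For part (i), the hypothesis $\vB:\vL^2\to\mV$ forces $\vB(\vu^n)$ to be divergence-free, hence $\nab\xi^n=0$ and the noise term disappears entirely; then $k\sum_{n=1}^M\|\nab p^n\|^2\le 2k\sum_{n=1}^M\|\vf^n\|^2$, and raising to the power $q$ and taking expectation yields $C_{T,q}$ by the (sufficiently high-moment) space-time $\vL^2$ integrability of $\vf$ from Definition \ref{def2.1}. For part (ii) the noise term survives, and the crux is to control $\mE\bigl[(\sum_{n=1}^M|\Delta W_n|^2\|\nab\xi^{n-1}\|^2)^q\bigr]$ without losing the sharp power of $k$. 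Here I would apply Minkowski's inequality in $L^q(\Omega)$ to pull the power inside the sum, and then exploit the independence of the increment $\Delta W_n$ from $\cF_{t_{n-1}}$ (which contains $\xi^{n-1}$): for each $n$,
\begin{align*}
\bigl(\mE\bigl[|\Delta W_n|^{2q}\|\nab\xi^{n-1}\|^{2q}\bigr]\bigr)^{1/q}=\bigl(\mE[|\Delta W_n|^{2q}]\bigr)^{1/q}\bigl(\mE[\|\nab\xi^{n-1}\|^{2q}]\bigr)^{1/q}\le C\,k,
\end{align*}
where the first factor uses \eqref{mean_wiener} with $m=q$ and the second uses \eqref{eq2.6b} together with Lemma \ref{stability_mean}(i) (uniformly in $n$). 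Summing the $M=T/k$ terms gives $\bigl\|\sum_n|\Delta W_n|^2\|\nab\xi^{n-1}\|^2\bigr\|_{L^q(\Omega)}\le CT$, i.e. $\mE[(\sum_n\cdots)^q]\le C_{T,q}$; combined with the prefactor $k\cdot\frac{1}{k^2}=\frac1k$, this produces exactly the $k^{-q}$ blow-up of \eqref{stability_pressure2}.

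The main obstacle is precisely this last step: a naive estimate that first extracts $\max_n\|\nab\xi^n\|^{2q}$ would both demand unnecessarily high moments of $\vu_0$ and, more seriously, fail to track the correct power of $k$. The Minkowski-plus-independence argument keeps the count of $k$-powers sharp, since each summand contributes $O(k)$ in the $L^q(\Omega)$-norm while there are $O(k^{-1})$ of them, and it also makes transparent why part (i) loses the $k^{-q}$ factor entirely: when $\vB$ is divergence-valued the stochastic forcing never enters the pressure.
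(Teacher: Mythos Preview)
Your proof is correct and follows essentially the same strategy as the paper: isolate $\nabla p^{n+1}$ from the momentum equation (you apply $I-\vP_{\mH}$, the paper tests against $\nabla p^{n+1}$, which amounts to the same thing), observe that in case (i) the noise term is orthogonal to gradients and hence drops out, and in case (ii) control the surviving noise sum via the independence of $\Delta W_n$ from $\cF_{t_{n-1}}$ together with the moment bound \eqref{mean_wiener} and Lemma \ref{stability_mean}. The only cosmetic difference is that for the noise sum you use Minkowski's inequality in $L^q(\Omega)$ to obtain $\bigl\|\sum_n|\Delta W_n|^2\|\nab\xi^{n-1}\|^2\bigr\|_{L^q(\Omega)}\le M\cdot Ck=CT$, whereas the paper uses the discrete H\"older inequality $(\sum_n a_n)^q\le M^{q-1}\sum_n a_n^q$ before taking expectation; both yield the identical bound $C_{T,q}$ and hence the same $k^{-q}$ factor after multiplying by $k\cdot k^{-2}=k^{-1}$.
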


\begin{proof}
	When $q = 1$, both \eqref{stability_pressure1}, \eqref{stability_pressure2} were already shown   \cite[Lemma 3.2]{CP12}. Thus, it remains to prove them for $q >1$. 
	
	\begin{enumerate}[\rm (i)]
		\item We first multiply the strong form of \eqref{eulerscheme1} by $\nab p^{n+1}$ and use the fact that since $\vB(\vu) \in \mV$, so  $\displaystyle \bigl(\vB(\vu^n)\Delta W_{n+1}, \nab p^{n+1}\bigr) = 0$ to conclude that
		\begin{align}\label{eq36}
		k\|\nab p^{n+1}\|^2_{\vL^2} &\leq C k\|\vf^{n+1}\|^2_{\vL^2}.
		\end{align}
		
		Next, taking the summation over the index $n$ followed by taking the $q$th power and expectation on both sides of \eqref{eq36} leads to the desired estimate. 
		  
		\item Let $\vB \in L^{\infty}(0,T;\vH^1_{per}(D))$, then $\displaystyle \bigl(\vB(\vu^n)\Delta W_{n+1}, \nab p^{n+1}\bigr) \neq 0$. Hence,
		\begin{align}\label{eq3.8}
		k\|\nab p^{n+1}\|^2_{\vL^2} \leq C k\|\vf^{n+1}\|^2_{\vL^2}+\frac{C}{k}\|\vB(\vu^n)\Delta W_{n+1}\|^2_{\vL^2}.
		\end{align}
		
		Taking the summation over the index $n$ followed by taking the $q$th power and expectation on both sides of \eqref{eq3.8}, we get 
		\begin{align}\label{eq3.9}
		\mE\biggl[\biggl(k\sum_{n=1}^M \|\nab p^{n}\|^2_{\vL^2}\biggr)^q\biggr] &\leq C_q\mE\biggl[\biggl(k\sum_{n=1}^M \|\vf^{n}\|^2_{\vL^2}\biggr)^q\biggr] 
		\\\nonumber
		&\qquad + \frac{C_q}{k^q} \mE\biggl[\biggl(\sum_{n=1}^M\|\vB(\vu^{n-1})\Delta W_{n}\|^2_{\vL^2}\biggr)^q\biggr].
		\end{align}
We now bound the last term on the right side of \eqref{eq3.9}. By the discrete H\"older inequality for summation and \eqref{eq2.6b}, we obtain
		\begin{align}\label{eq3.7}
		\mE\biggl[\biggl(\sum_{n=1}^M\|\vB(\vu^{n-1})\Delta W_{n}\|^2_{\vL^2}\biggr)^q\biggr] &\leq C_q\mE\biggl[\Bigl(\sum_{n=1}^M\|\vu^{n-1}\|^2_{\vL^2} |\Delta W_{n}|^2\Bigr)^q\biggr]\\\nonumber
		&\leq C_qM^{q-1}\mE\biggl[\sum_{n=1}^M\|\vu^{n-1}\|^{2q}_{\vL^2} |\Delta W_{n}|^{2q}\biggr].
		\end{align}
	\end{enumerate}
Using the tower property of the conditional expectation, the independence of the increments of the Wiener process and \eqref{mean_wiener}, we obtain
\begin{align}\label{eq38}
	\mE\bigl[\|\vu^{n-1}\|^{2q}_{\vL^2}|\Delta W_n|^{2q}\bigr] &\leq C_qk^q \mE\bigl[\|\vu^{n-1}\|^{2q}_{\vL^2}\bigr].
\end{align}

Substitute \eqref{eq3.7}, \eqref{eq38} into \eqref{eq3.9} we obtain
\begin{align}\label{eq3.10}
\mE\biggl[\biggl(k\sum_{n=1}^M \|\nab p^{n}\|^2_{\vL^2}\biggr)^q\biggr] &\leq C_q\mE\biggl[\biggl(k\sum_{n=1}^M \|\vf^{n}\|^2_{\vL^2}\biggr)^q\biggr] + \frac{C_q}{k^q} \mE\biggl[k\sum_{n=1}^M\|\vu^{n-1}\|^{2q}_{\vL^2}\biggr].
\end{align}

Finally, the proof is complete by using the assertion (i) of Lemma \ref{stability_mean}.
\end{proof}

\subsection{High moment and pathwise error estimates for the velocity approximation}
In this subsection, we present the first main result of this paper which establishes the optimal order 
high moment error estimates for the velocity approximation by Algorithm 1 and the sub-optimal order pathwise error estimate for the velocity approximation with the help of Theorem \ref{kolmogorov}.

\begin{theorem}\label{theorem_semi} 
	Let $\vu$ be the variational solution to \eqref{eq2.8a} and $\{\vu^{n}\}_{n=1}^M$ be generated by  Algorithm 1. Assume that $\vu_0 \in L^{2^q}(\Ome; \mV)$. Then there exists $C_1 = C_1(T,q,\vu_0,\vf)>0$ for any { integer} $1 \leq q < \infty$ and {real number} $0 < \gamma < \frac12$ such that
	\begin{align}\label{eq310}
		 \mE\bigl[\max_{1\leq n\leq M}\|\vu(t_n) - \vu^n\|^{2^q}_{\vL^2}\bigr] \leq C_1\, k^{2^{q} \gamma}.
	\end{align}
\end{theorem}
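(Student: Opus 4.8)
The plan is to argue by induction on $q$. The base case $q=1$ is the second-moment estimate $\mE[\max_{1\le n\le M}\|\ve^n\|^2]\le C k^{2\gamma}$, with $\ve^n:=\vu(t_n)-\vu^n$, which is exactly the energy argument below specialized to $P=1$ and is of the type established in \cite{CP12,Feng,Feng1}. Assuming the estimate at level $q$, I would establish it at level $q+1$ by raising the summed error energy identity to the power $P:=2^q$, so that the target exponent becomes $2P\gamma = 2^{q+1}\gamma$.

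First I would derive the error equation: subtracting \eqref{eulerscheme1} from the variational form \eqref{eq2.8a} integrated over $[t_n,t_{n+1}]$ and testing with the divergence-free function $\ve^{n+1}\in\mV$ (so the pressure drops out), the identity $(a-b,a)=\tfrac12(\|a\|^2-\|b\|^2+\|a-b\|^2)$ together with the splitting $\vu(s)-\vu^{n+1}=(\vu(s)-\vu(t_{n+1}))+\ve^{n+1}$ exposes the dissipation $\nu k\|\nab\ve^{n+1}\|^2$ and yields, after absorbing the cross terms proportional to $\|\nab\ve^{n+1}\|^2$ into the left-hand side,
\begin{align*}
\tfrac12\|\ve^{n+1}\|^2-\tfrac12\|\ve^n\|^2+\tfrac{\nu k}{4}\|\nab\ve^{n+1}\|^2 \le T_1^n+T_2^n+\tilde M^n+\tfrac12 J^n,
\end{align*}
where $T_1^n,T_2^n$ are the deterministic consistency terms coming from the time-regularity of $\vu$ and $\vf$, $\tilde M^n=(\zeta^n,\ve^n)$ with $\zeta^n:=\vP_{\mathbb H}\int_{t_n}^{t_{n+1}}(\vB(\vu(s))-\vB(\vu^n))\,dW(s)$ is a genuine martingale increment (since $\ve^n$ is $\cF_{t_n}$-measurable), and $J^n:=\|\zeta^n\|^2$ arises from applying Cauchy--Schwarz and Young to the non-adapted part $(\zeta^n,\ve^{n+1}-\ve^n)$. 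Summing in $n$ with $\ve^0=0$, raising to the $P$-th power, taking the maximum over $m\le M$ and expectation reduces the claim to bounding $\mE[T_1^{\,P}]$, $\mE[T_2^{\,P}]$, $\mE[\max_m|\sum_{n<m}\tilde M^n|^P]$ and $\mE[(\sum_n J^n)^P]$.

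For the consistency terms I would use Lemma \ref{lemma2.3}: rewriting $\sum_n\int_{t_n}^{t_{n+1}}\|\vu(s)-\vu(t_{n+1})\|_{\mV}^2\,ds$ as one integral and applying Jensen in time together with $\mE[\|\vu(s)-\vu(t_{n+1})\|_{\mV}^{2P}]\le C|s-t_{n+1}|^{2P\gamma}\le Ck^{2P\gamma}$ gives the order $k^{2P\gamma}$; the $\vf$-term is handled identically from $\vf\in C^{1/2}(0,T;\vH^{-1})$ and is of even higher order in $k$. The two stochastic contributions are the heart of the matter. For the martingale $\sum_{n<m}\tilde M^n$ I would apply the Burkholder--Davis--Gundy inequality, bound its quadratic variation by $\sum_n\|\ve^n\|^2\bigl(\int_{t_n}^{t_{n+1}}\|\vu(s)-\vu(t_n)\|^2\,ds+k\|\ve^n\|^2\bigr)$, and split by Young into an absorbed term $\epsilon\,\mE[\max_m\|\ve^m\|^{2P}]$, a consistency term of order $k^{2P\gamma}$, and a Gronwall term $Ck\sum_n\mE[\max_{j\le n}\|\ve^j\|^{2P}]$. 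For $\mE[(\sum_n J^n)^P]$ I would use that $\sum_{n<m}J^n=\|\Theta_m\|^2-2\sum_{n<m}(\Theta_n,\zeta^n)$ with $\Theta_m:=\sum_{n<m}\zeta^n$ itself a martingale: BDG controls $\mE[\max_m\|\Theta_m\|^{2P}]$ directly via $\mE[(\int_0^T\|\vB(\vu(s))-\vB(\vu^{n(s)})\|^2\,ds)^P]$, again splitting into an order-$k^{2P\gamma}$ part and a Gronwall part, while the cross-martingale $\sum(\Theta_n,\zeta^n)$ is reabsorbed via Young into $\tfrac12\mE[(\sum_n J^n)^P]+C\,\mE[\max_m\|\Theta_m\|^{2P}]$. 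The a priori finiteness needed for these absorptions is guaranteed by the stability estimates of Lemma \ref{stability_mean}. Collecting everything leaves $\mE[\max_{m\le\ell}\|\ve^m\|^{2P}]\le Ck^{2P\gamma}+Ck\sum_{n<\ell}\mE[\max_{j\le n}\|\ve^j\|^{2P}]$, and the discrete Gronwall inequality closes the induction.

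The main obstacle is precisely the high-moment control of the stochastic terms: a naive use of the discrete H\"older inequality on $(\sum_n J^n)^P$ loses a factor $M^{P-1}=(T/k)^{P-1}$ that destroys the rate, so the key idea is to recast the squared increments through the auxiliary martingale $\Theta_m$ and to use Young's inequality for self-absorption, with the induction from the lower moment ensuring both that all quantities in sight are finite and that the fluctuation terms carry enough powers of $k$. Keeping the maximum inside the expectation throughout, rather than estimating pointwise in $n$, is what makes the BDG inequality and the final Gronwall argument compatible and delivers the rate $k^{2^q\gamma}$.
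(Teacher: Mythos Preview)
Your proof is correct and follows a genuinely different route from the paper. The paper's argument is a pointwise bootstrap: starting from the basic energy identity for $\|\ve^{n+1}\|^2-\|\ve^n\|^2$, it multiplies by $\|\ve^{n+1}\|^2$ (and iterates) to obtain an inductive \emph{pathwise} inequality of the form \eqref{eq332}, then first proves the weaker estimate $\max_n\mE[\|\ve^n\|^{2^q}]\le Ck^{2^q\gamma}$ (maximum outside the expectation) and upgrades to the strong estimate via a second pass with BDG. Your argument instead sums the energy identity once, raises to the power $P=2^q$, and treats the two stochastic contributions by BDG directly: the genuinely novel ingredient is your handling of $\bigl(\sum_n J^n\bigr)^P$ through the telescoping identity $\sum_{n<m}J^n=\|\Theta_m\|^2-2\sum_{n<m}(\Theta_n,\zeta^n)$ and self-absorption, which circumvents the $M^{P-1}$ loss that a naive H\"older bound would incur.

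Each approach has its advantages. The paper's bootstrap yields an explicit $\mP$-a.s.\ recursion \eqref{eq332} that makes the dependence on $q$ transparent and is reusable in the later finite-element analysis; on the other hand it requires the two-step weak-then-strong argument and a separate induction. Your route is more streamlined---one pass with BDG and Gronwall, with the maximum kept inside the expectation throughout---and the ``induction on $q$'' is really only needed to guarantee finiteness for the absorptions, which (as you note) already follows from Lemma~\ref{stability_mean}. The paper's multiply-by-$\|\ve^{n+1}\|^2$ trick and your auxiliary-martingale trick are essentially two ways of avoiding the same obstruction; yours is closer in spirit to the standard high-moment SPDE estimates and arguably more transparent, while the paper's is more elementary in each individual step.
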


\begin{proof}
	When $q = 1$, the estimate was already proved in \cite{Feng,Feng1}. Thus, it remains to show \eqref{eq310} 
	for $q \geq 2$. We start with $q = 2$.
	
	Let $\ve^n =\vu(t_n) - \vu^n$. Integrating \eqref{eq2.10a} from $t_n$ to $t_{n+1}$ and choosing $\pphi \in \mV$, we obtain
	\begin{align}\label{eq_reform}
\bigl({\bf u}(t_{n+1}) - \vu(t_n),  \pphi \bigr) + \int_{t_n}^{t_{n+1}}  \bigl(\nab {\bf u}(s), \nab \pphi \bigr) \, ds 
&=\int_{t_n}^{t_{n+1}}\bigl(\vf(s),\pphi\bigr)\, ds \\\nonumber
& + {\int_{t_n}^{t_{n+1}}  \bigl( {\bf B}\bigl({\bf u}(s)\bigr), \pphi \bigr)\, d W(s)}.
\end{align}

Subtracting \eqref{eq_reform} from \eqref{eulerscheme1}, we obtain the following error equation for the velocity:
\begin{align}\label{eq3.4}
	\bigl(\ve^{n+1} - \ve^n, \pphi\bigr) + \nu k \bigl(\nab\ve^{n+1}, \nab\pphi\bigr) &= \nu\int_{t_n}^{t_{n+1}} \bigl(\nab(\vu(s) - \vu(t_{n+1})),\nab\pphi\bigr)\, ds\\\nonumber
	&+ \int_{t_n}^{t_{n+1}}\bigl(\vf(s) - \vf(t_{n+1}),\pphi\bigr)\, ds\\\nonumber
	&+ \int_{t_n}^{t_{n+1}} \bigl(\vB(\vu(s)) - \vB(\vu^n),\pphi\bigr)\, dW(s).
\end{align}
	
Choosing $\pphi = \ve^{n+1}$ in \eqref{eq3.4} and using the identity $2(a-b)a = a^2 - b^2 + (a-b)^2$,  then the left-hand side ({\tt LHS}) and right-hand side ({\tt RHS}) of \eqref{eq3.4} become
\begin{align}\label{eq3.5}
	{\tt LHS} = \frac{1}{2}\big[\|\ve^{n+1}\|^2_{\vL^2} - \|\ve^n\|^2_{\vL^2}\big] + \frac12 \|\ve^{n+1} - \ve^n\|^2_{\vL^2} + \nu k\|\nab\ve^{n+1}\|^2_{\vL^2}.
\end{align}
\begin{align}\label{eq3.6}
	{\tt RHS} &= \nu\int_{t_n}^{t_{n+1}} \bigl(\nab(\vu(s) - \vu(t_{n+1})),\nab\ve^{n+1}\bigr)\, ds\\\nonumber
	&\qquad+ \int_{t_n}^{t_{n+1}}\bigl(\vf(s) - \vf(t_{n+1}),\ve^{n+1}\bigr)\, ds\\\nonumber
	&\qquad+  \biggl(\int_{t_n}^{t_{n+1}}\bigl(\vB(\vu(s)) - \vB(\vu^n)\bigr)\, dW(s),\ve^{n+1} - \ve^n\biggr)\\\nonumber
	&\qquad+  \biggl(\int_{t_n}^{t_{n+1}}\bigl(\vB(\vu(s)) - \vB(\vu^n)\bigr)\, dW(s),\ve^n\biggr).
\end{align}

Next, multiplying  \eqref{eq3.5} and \eqref{eq3.6} by $\|\ve^{n+1}\|^2_{\vL^2}$ yields 	
\begin{align}\label{eq3.15}
	{\tt LHS} &= \frac14\bigl[\|\ve^{n+1}\|^4_{\vL^2} - \|\ve^n\|^4_{\vL^2}\bigr] + \frac14\bigl(\|\ve^{n+1}\|^2_{\vL^2} - \|\ve^{n}\|^2_{\vL^2}\bigr)^2\\\nonumber
	&\qquad +\frac12 \|\ve^{n+1} - \ve^n\|^2_{\vL^2}\|\ve^{n+1}\|^2_{\vL^2} + \nu k \|\nab\ve^{n+1}\|^2_{\vL^2}\|\ve^{n+1}\|^2_{\vL^2}.
\end{align}
\begin{align}\label{eq3.16}
		{\tt RHS} &= \nu\int_{t_n}^{t_{n+1}} \bigl(\nab(\vu(s) - \vu(t_{n+1})),\nab\ve^{n+1}\bigr)\, ds\|\ve^{n+1}\|^2_{\vL^2}\\\nonumber
		&\qquad+ \int_{t_n}^{t_{n+1}}\bigl(\vf(s) - \vf(t_{n+1}),\ve^{n+1}\bigr)\, ds\|\ve^{n+1}\|^2_{\vL^2}\\\nonumber
	&\qquad+  \biggl(\int_{t_n}^{t_{n+1}}\bigl(\vB(\vu(s)) - \vB(\vu^n)\bigr)\, d W(s),\ve^{n+1} - \ve^n\biggr)\|\ve^{n+1}\|^2_{\vL^2}\\\nonumber
	&\qquad+  \biggl(\int_{t_n}^{t_{n+1}}\bigl(\vB(\vu(s)) - \vB(\vu^n)\bigr)\, d W(s),\ve^n\biggr)\|\ve^{n+1}\|^2_{\vL^2}\\\nonumber
	& =: {\tt I + II + III + IV}.
\end{align}

Now, we estimate terms of {\tt I, II, III, IV} below.
\begin{align}\label{eq3.17}
	{\tt I} &\leq \nu\int_{t_n}^{t_{n+1}} \|\nab(\vu(t_{n+1}) - \vu(s))\|_{\vL^2}\|\nab\ve^{n+1}\|_{\vL^2}\|\ve^n\|_{\vL^2}\,ds\\\nonumber
	&\leq \nu\int_{t_n}^{t_{n+1}} \|\nab(\vu(t_{n+1}) - \vu(s))\|^2_{\vL^2}\|\ve^{n+1}\|^2_{\vL^2}\, ds \\\nonumber
	&\qquad\qquad+ \frac{\nu k }{4} \|\nab\ve^{n+1}\|^2_{\vL^2}\|\ve^{n+1}\|^2_{\vL^2}\\\nonumber
	&= \nu\int_{t_n}^{t_{n+1}} \|\nab(\vu(t_{n+1}) - \vu(s))\|^2_{\vL^2}\, ds\bigl(\|\ve^{n+1}\|^2_{\vL^2} - \|\ve^n\|^2_{\vL^2}\bigr) \\\nonumber
	&\qquad+\nu\int_{t_n}^{t_{n+1}} \|\nab(\vu(t_{n+1}) - \vu(s))\|^2_{\vL^2}\, ds \|\ve^n\|^2_{\vL^2} \\\nonumber
	&\qquad\qquad+ \frac{\nu k }{4} \|\nab\ve^{n+1}\|^2_{\vL^2}\|\ve^{n+1}\|^2_{\vL^2}\\\nonumber
	&\leq 8\biggl(\nu\int_{t_n}^{t_{n+1}}\|\nab(\vu(t_{n+1}) - \vu(s))\|^2_{\vL^2}\biggr)^2 + \frac{1}{32}\bigl(\|\ve^{n+1}\|^2_{\vL^2} - \|\ve^n\|^2_{\vL^2}\bigr)^2\\\nonumber
	&\qquad+ \frac{\nu^2}{4} \int_{t_n}^{t_{n+1}} \|\nab(\vu(t_{n+1}) - \vu(s))\|^4_{\vL^2}\, ds + k\|\ve^n\|^4_{\vL^2}\\\nonumber
	&\qquad\qquad+ \frac{\nu k }{4} \|\nab\ve^{n+1}\|^2_{\vL^2}\|\ve^{n+1}\|^2_{\vL^2}
\end{align}

By using \eqref{eq2.20a}, we obtain
\begin{align}
	\mE[{\tt I}] &\leq C\, k^{1+4\gamma} + \frac{1}{32}\mE\bigl[\bigl(\|\ve^{n+1}\|^2_{\vL^2} - \|\ve^n\|^2_{\vL^2}\bigr)^2\bigr] \\\nonumber
	&+ \frac{\nu k }{4} \mE\bigl[\|\nab\ve^{n+1}\|^2_{\vL^2}\|\ve^{n+1}\|^2_{\vL^2}\bigr]+ k\mE\bigl[\|\ve^n\|^4_{\vL^2}\bigr]
\end{align}

\begin{align}\label{eq3.19}
	{\tt II} &\leq \int_{t_n}^{t_{n+1}} \|\vf(s) - \vf(t_{n+1})\|_{\vH^{-1}}\|\nab\ve^{n+1}\|_{\vL^2}\|\ve^{n+1}\|^2_{\vL^2}\, ds\\\nonumber
	&\leq C\int_{t_n}^{t_{n+1}}\|\vf(s) - \vf(t_{n+1})\|^2_{\vH^{-1}}\|\ve^{n+1}\|^2_{\vL^2}\, ds + \frac{\nu k}{4}\|\nab\ve^{n+1}\|^2_{\vL^2}\|\ve^{n+1}\|^2_{\vL^2}\\\nonumber
	&\leq C\int_{t_n}^{t_{n+1}}\|\vf(s) - \vf(t_{n+1})\|^4_{\vH^{-1}}\, ds + \frac{1}{32}\bigl(\|\ve^{n+1}\|^2_{\vL^2} - \|\ve^{n}\|^2_{\vL^2}\bigr)^2 \\\nonumber
	&\qquad+C k\|\ve^n\|^4_{\vL^2} + \frac{\nu k}{4}\|\nab\ve^{n+1}\|^2_{\vL^2}\|\ve^{n+1}\|^2_{\vL^2}.
\end{align}

Since $\vf \in L^{2^q}(\Ome;C^{\frac12}(0,T; \vH^{-1}(D)))$ for $q = 2$, we have 
\begin{align}
	\mE[{\tt II}] &\leq Ck^{3} + \frac{1}{32}\mE\bigl[\bigl(\|\ve^{n+1}\|^2_{\vL^2} - \|\ve^{n}\|^2_{\vL^2}\bigr)^2\bigr] \\\nonumber
	&\qquad+C k\mE\bigl[\|\ve^n\|^4_{\vL^2}\bigr] + \frac{\nu k}{4}\mE\bigl[\|\nab\ve^{n+1}\|^2_{\vL^2}\|\ve^{n+1}\|^2_{\vL^2}\bigr].
\end{align}
\begin{align}\label{eq3.21}
	{\tt III} &= \biggl(\int_{t_n}^{t_{n+1}}\bigl(\vB(\vu(s)) - \vB(\vu^n)\bigr)\, d W(s),\ve^{n+1} - \ve^n\biggr)\|\ve^{n+1}\|^2_{\vL^2}\\\nonumber
	&\leq \Bigl\|\int_{t_n}^{t_{n+1}}\bigl(\vB(\vu(s)) -\vB(\vu^n)\bigr)\, dW(s)\Bigr\|^2_{\vL^2}\|\ve^{n+1}\|^2_{\vL^2} \\\nonumber
	&\qquad+ \frac14\|\ve^{n+1} - \ve^n\|^2_{\vL^2}\|\ve^{n+1}\|^2_{\vL^2}\\\nonumber
	&= \Bigl\|\int_{t_n}^{t_{n+1}}\bigl(\vB(\vu(s)) -\vB(\vu^n)\bigr)\, dW(s)\Bigr\|^2_{\vL^2}\bigl(\|\ve^{n+1}\|^2_{\vL^2} - \|\ve^n\|^2_{\vL^2}\bigr) \\\nonumber
	&\qquad+\Bigl\|\int_{t_n}^{t_{n+1}}\bigl(\vB(\vu(s)) -\vB(\vu^n)\bigr)\, dW(s)\Bigr\|^2_{\vL^2} \|\ve^n\|^2_{\vL^2}\\\nonumber
	&\qquad+ \frac14\|\ve^{n+1} - \ve^n\|^2_{\vL^2}\|\ve^{n+1}\|^2_{\vL^2}\\\nonumber
	&\leq 8\Bigl\|\int_{t_n}^{t_{n+1}}\bigl(\vB(\vu(s)) -\vB(\vu^n)\bigr)\, dW(s)\Bigr\|^4_{\vL^2} + \frac{1}{32}\bigl(\|\ve^{n+1}\|^2_{\vL^2} - \|\ve^{n}\|^2_{\vL^2}\bigr)^2 \\\nonumber
	&\qquad+\Bigl\|\int_{t_n}^{t_{n+1}}\bigl(\vB(\vu(s)) -\vB(\vu^n)\bigr)\, dW(s)\Bigr\|^2_{\vL^2} \|\ve^n\|^2_{\vL^2}\\\nonumber
	&\qquad+ \frac14\|\ve^{n+1} - \ve^n\|^2_{\vL^2}\|\ve^{n+1}\|^2_{\vL^2}.
\end{align}
\begin{align}\label{eq3.22}
	{\tt IV} &=\biggl(\int_{t_n}^{t_{n+1}}\bigl(\vB(\vu(s)) - \vB(\vu^n)\bigr)\, d W(s),\ve^{n}\biggr)\bigl(\|\ve^{n+1}\|^2_{\vL^2}-\|\ve^n\|^2_{\vL^2}\bigr)\\\nonumber
	&\qquad +\biggl(\int_{t_m}^{t_{m+1}}\bigl(\vB(\vu(s)) - \vB(\vu^n)\bigr)\, d W(s),\ve^n\biggr)\|\ve^{n}\|^2_{\vL^2}\\\nonumber
	&\leq 8\Bigl\|\int_{t_n}^{t_{n+1}}\bigl(\vB(\vu(s)) - \vB(\vu^n)\bigr)\, dW(s)\Bigr\|^2_{\vL^2}\|\ve^n\|^2_{\vL^2} \\\nonumber
	&\qquad+ \frac{1}{32}\bigl(\|\ve^{n+1}\|^2_{\vL^2} - \|\ve^n\|^2_{\vL^2}\bigr)^2 \\\nonumber
	&\qquad+ \biggl(\int_{t_n}^{t_{n+1}}\bigl(\vB(\vu(s)) - \vB(\vu^n)\bigr)\, d W(s),\ve^n\biggr)\|\ve^{n}\|^2_{\vL^2}.
\end{align}
We note that the last term on the right side of \eqref{eq3.22} has zero expected value because of the martingale property of the It\^o integrals.

Now, substituting the above estimates for terms {\tt I, II, III, IV} into {\tt RHS} in \eqref{eq3.16} 
and taking expectation on both {\tt LHS} and {\tt RHS}, followed by absorbing the like terms of {\tt LHS} 
in \eqref{eq3.15} into those of {\tt RHS} in \eqref{eq3.16}, we obtain 
\begin{align}\label{eq3.23}
	&\frac14\mE\bigl[\|\ve^{n+1}\|^4_{\vL^2} - \|\ve^n\|^4_{\vL^2}\bigr] +\frac14 \mE\bigl[\|\ve^{n+1} - \ve^n\|^2_{\vL^2}\|\ve^{n+1}\|^2_{\vL^2}\bigr] \\\nonumber
	&\qquad\qquad\qquad+ \frac{\nu k}{2}
	\mE\bigl[\|\nab\ve^{n+1}\|^2_{\vL^2}\|\ve^{n+1}\|^2_{\vL^2}\bigr]\\\nonumber
	&\leq C\,k^{1+4\gamma} + C k\mE\bigl[\|\ve^{n}\|^4_{\vL^2}\bigr] + Ck^3 \\\nonumber
	&\qquad+ C\mE\biggl[\Bigl\|\int_{t_n}^{t_{n+1}}\bigl(\vB(\vu(s)) - \vB(\vu^n)\bigr)\, dW(s)\Bigr\|^4_{\vL^2}\biggr] \\\nonumber
	&\qquad+ C\mE\biggl[\Bigl\|\int_{t_n}^{t_{n+1}}\bigl(\vB(\vu(s)) - \vB(\vu^n)\bigr)\, dW(s)\Bigr\|^2_{\vL^2}\|\ve^n\|^2_{\vL^2}\biggr]\\\nonumber
	&\leq Ck^{1+4\gamma} + Ck\mE\bigl[\|\ve^n\|^4_{\vL^2}\bigr] + {\tt V + VI},
\end{align}
where \begin{align*}
	{\tt V} &:= C\mE\biggl[\Bigl\|\int_{t_n}^{t_{n+1}}\bigl(\vB(\vu(s)) - \vB(\vu^n)\bigr)\, dW(s)\Bigr\|^4_{\vL^2}\biggr],\\\nonumber
	{\tt VI} &:=C\mE\biggl[\Bigl\|\int_{t_n}^{t_{n+1}}\bigl(\vB(\vu(s)) - \vB(\vu^n)\bigr)\, dW(s)\Bigr\|^2_{\vL^2}\|\ve^n\|^2_{\vL^2}\biggr].
\end{align*}
	 
	 To estimate {\tt V}, we first use \eqref{ito4} and then use \eqref{eq2.20a} to get 
	\begin{align}\label{eq3.24}
		{\tt V} &= C\mE\biggl[\Bigl\|\int_{t_n}^{t_{n+1}}\bigl(\vB(\vu(s)) - \vB(\vu^n)\bigr)\, dW(s)\Bigr\|^4_{\vL^2}\biggr]\\\nonumber
		&\leq C\mE\Bigl[\int_{t_n}^{t_{n+1}} \|\vB(\vu(s))-\vB(\vu^n)\|^4_{\vL^2}\, ds\Bigr]\\\nonumber
		&\leq C \int_{t_n}^{t_{n+1}}\mE\bigl[\|\vu(s) - \vu^n\|^4_{\vL^2}\bigr]\, ds\\\nonumber
		&\leq C \int_{t_n}^{t_{n+1}} \mE\bigl[\|\vu(s) - \vu(t_n)\|^4_{\vL^2}\bigr]\, ds + Ck\mE\bigl[\|\ve^n\|^4_{\vL^2}\bigr]\\\nonumber
		&\leq Ck^{1+4\gamma} + Ck\mE\bigl[\|\ve^n\|^4_{\vL^2}\bigr].
	\end{align}
	 To estimate {\tt VI}, we use the It\^o isometry given in \eqref{ito2} and \eqref{eq2.20a} to get 
	\begin{align}\label{eq3.25}
		{\tt VI} &=C\mE\biggl[\Bigl\|\int_{t_n}^{t_{n+1}}\bigl(\vB(\vu(s)) - \vB(\vu^n)\bigr)\, dW(s)\Bigr\|^2_{\vL^2}\|\ve^n\|^2_{\vL^2}\biggr]\\\nonumber
		&= C\mE\biggl[\Bigl\|\int_{t_n}^{t_{n+1}}\bigl(\vB(\vu(s)) - \vB(\vu^n)\bigr)\|\ve^n\|_{\vL^2}\, dW(s)\Bigr\|^2_{\vL^2}\biggr]\\\nonumber
		&= C\mE\Bigl[\int_{t_n}^{t_{n+1}}\|\vB(\vu(s)) - \vB(\vu^n)\|^2_{\vL^2}\|\ve^n\|^2_{\vL^2}\, ds\Bigr]\\\nonumber
		&\leq C\int_{t_n}^{t_{n+1}}\mE\bigl[\|\vu(s) - \vu^n\|^2_{\vL^2}\|\ve^n\|^2_{\vL^2}\bigr]\, ds\\\nonumber
		&\leq C\int_{t_n}^{t_{n+1}}\mE\bigl[\|\vu(s) - \vu(t_n)\|^2_{\vL^2}\|\ve^n\|^2_{\vL^2}\bigr]\, ds  + Ck\mE\bigl[\|\ve^n\|^4_{\vL^2}\bigr]\\\nonumber
		&\leq C\int_{t_n}^{t_{n+1}}\mE\bigl[\|\vu(s) - \vu(t_n)\|^4_{\vL^2}\bigr]\, ds  + Ck\mE\bigl[\|\ve^n\|^4_{\vL^2}\bigr]\\\nonumber
		&\leq Ck^{1+4\gamma} +  Ck\mE\bigl[\|\ve^n\|^4_{\vL^2}\bigr].
	\end{align}

	Bounding {\tt V, VI} by \eqref{eq3.24} and \eqref{eq3.25} in \eqref{eq3.23}, we obtain
	\begin{align}\label{eq3.26}
		&\frac14\mE\bigl[\|\ve^{n+1}\|^4_{\vL^2} - \|\ve^n\|^4_{\vL^2}\bigr] + \frac18\mE\bigl[\bigl(\|\ve^{n+1}\|^2_{\vL^2} - \|\ve^n\|^2_{\vL^2}\bigr)^2\bigr] \\\nonumber
		&\qquad+\frac14 \mE\bigl[\|\ve^{n+1} - \ve^n\|^2_{\vL^2}\|\ve^{n+1}\|^2_{\vL^2}\bigr] + \frac{\nu k}{2}\mE\bigl[\|\nab\ve^{n+1}\|^2_{\vL^2}\|\ve^{n+1}\|^2_{\vL^2}\bigr]\\\nonumber
		&\qquad\leq Ck^{1+4\gamma} + Ck\mE\bigl[\|\ve^n\|^4_{\vL^2}\bigr].
	\end{align}
	
	Next, lowering the index $n$ in \eqref{eq3.26} by $1$ and applying the summation $\sum_{n=1}^{\ell}$ for any $1\leq \ell \leq M$, we have
	\begin{align}\label{eq3.27}
		\mE\bigl[\|\ve^{\ell}\|^4_{\vL^2}\bigr] &+ \sum_{n=1}^{\ell}\mE\bigl[\|\ve^{n} - \ve^{n-1}\|^2_{\vL^2}\|\ve^{n}\|^2_{\vL^2}\bigr] + 2\nu k\sum_{n=1}^{\ell}\mE\bigl[\|\nab\ve^n\|^2_{\vL^2}\|\ve^n\|^2_{\vL^2}\bigr] \\\nonumber
		&\leq Ck^{4\gamma} + Ck\sum_{n=1}^{\ell} \mE\bigl[\|\ve^{n-1}\|^4_{\vL^2}\bigr] \\\nonumber
		&\leq Ck^{4\gamma} e^{Ct_{\ell}},
	\end{align}
	where we have used the discrete Gronwall inequality to get the last inequality.
	
	Taking maximum over all $1\leq \ell\leq M$ to \eqref{eq3.27}, we conclude that 
	\begin{align}\label{eq3.28}
		\max_{1\leq \ell\leq M}\mE\bigl[\|\ve^{\ell}\|^4_{\vL^2}\bigr] \leq C k^{4\gamma}.
	\end{align}
	
	Since the maximum is taken outside of $\mE[\cdot]$, hence, \eqref{eq3.28} is weaker
	than the desired estimate for $q=2$. To show the stronger estimate, we follow the technique of Lemma 3.1 proof \cite{BCP12} which uses the estimate \eqref{eq3.28} as a bridge to obtain the desired estimate.
	
	To the end, substituting \eqref{eq3.17}--\eqref{eq3.22} into {\tt RHS} in \eqref{eq3.16} and equating 
	it with {\tt LHS} in \eqref{eq3.15} (without taking expectation), we obtain
	\begin{align}\label{eq3.29}
		&\frac14\bigl[\|\ve^{n+1}\|^4_{\vL^2} - \|\ve^n\|^4_{\vL^2}\bigr] +  \frac18\bigl(\|\ve^{n+1}\|^2_{\vL^2} - \|\ve^n\|^2_{\vL^2}\bigr)^2 \\\nonumber
		&\qquad+\frac14 \|\ve^{n+1} - \ve^n\|^2_{\vL^2}\|\ve^{n+1}\|^2_{\vL^2} + \frac{\nu k}{2}\|\nab\ve^{n+1}\|^2_{\vL^2}\|\ve^{n+1}\|^2_{\vL^2} \\\nonumber
		 &\quad \leq Ck\|\ve^n\|^4_{\vL^2} +  C\int_{t_n}^{t_{n+1}} \|\nab(\vu(t_{n+1}) - \vu(s))\|^4_{\vL^2}\, ds\\\nonumber
		&\qquad +C\int_{t_n}^{t_{n+1}}\|\vf(s) - \vf(t_{n+1})\|^4_{\vH^{-1}}\, ds\\\nonumber
		&\qquad + C\Bigl\|\int_{t_n}^{t_{n+1}}\bigl(\vB(\vu(s)) -\vB(\vu^n)\bigr)\, dW(s)\Bigr\|^4_{\vL^2}\\\nonumber
		&\qquad +C\Bigl\|\int_{t_n}^{t_{n+1}}\bigl(\vB(\vu(s)) - \vB(\vu^n)\bigr)\, dW(s)\Bigr\|^2_{\vL^2}\|\ve^n\|^2_{\vL^2}\\\nonumber
		&\qquad + \biggl(\int_{t_n}^{t_{n+1}}\bigl(\vB(\vu(s)) - \vB(\vu^n)\bigr)\, d W(s),\ve^n\biggr)\|\ve^{n}\|^2_{\vL^2}.
	\end{align} 
	
Applying the summation operator $\sum_{n=1}^{\ell}$ followed by $\max_{1\leq \ell\leq M}$ and taking  expectation on both sides,  on noting that the last term on the right side of \eqref{eq3.29} would not 
vanish anymore (which is the main difference of this new process compared with the proof of \eqref{eq3.28}), 
and by using \eqref{eq3.28}, we have
	\begin{align}\label{eq3.30}
		&\mE\bigl[\max_{1\leq \ell\leq M}\|\ve^{\ell}\|^4_{\vL^2}\bigr] \\\nonumber
		&\leq C\mE\biggl[\max_{1\leq \ell\leq M}\sum_{n=1}^{\ell}\biggl(\int_{t_{n-1}}^{t_{n}}\bigl(\vB(\vu(s)) - \vB(\vu^{n-1})\bigr)\, d W(s),\ve^{n-1}\biggr)\|\ve^{n-1}\|^2_{\vL^2}\biggr]\\\nonumber
		&\quad + C\,k^{4\gamma}.
	\end{align}
To bound the first term on the right side of \eqref{eq3.30}, we appeal to Burkholder-Davis-Gundy inequality to obtain
	\begin{align}\label{eq3.31}
		&\mE\biggl[\max_{1\leq \ell\leq M}\sum_{n=1}^{\ell}\biggl(\int_{t_{n-1}}^{t_{n}}\bigl(\vB(\vu(s)) - \vB(\vu^{n-1})\bigr)\, d W(s),\ve^{n-1}\biggr)\|\ve^{n-1}\|^2_{\vL^2}\biggr]\\\nonumber
		&\qquad \leq \mE\biggl[\biggl(\sum_{n=1}^M\int_{t_{n-1}}^{t_n} \|\vB(\vu(s)) - \vB(\vu^{n-1})\|^2_{\vL^2}\|\ve^{n-1}\|^6_{\vL^2}\, ds\biggr)^{1/2}\biggr] \\\nonumber
		&\qquad\leq C\mE\biggl[\biggl(\sum_{n=1}^M\int_{t_{n-1}}^{t_n} \|\vu(s) - \vu^{n-1}\|^2_{\vL^2}\|\ve^{n-1}\|^6_{\vL^2}\, ds\biggr)^{1/2}\biggr] \\\nonumber
		&\qquad \leq C\mE\biggl[\max_{1\leq \ell\leq M}\|\ve^{\ell}\|^2_{\vL^2}\biggl(\sum_{n=1}^M\int_{t_{n-1}}^{t_n}\|\vu(s)-\vu^{n-1}\|^2_{\vL^2}\|\ve^{n-1}\|^2_{\vL^2}\, ds\biggr)^{1/2}\biggr]\\\nonumber
		&\qquad \leq \frac12 \mE\bigl[\max_{1\leq \ell\leq M}\|\ve^{\ell}\|^4_{\vL^2}\bigr] + C\mE\biggl[\sum_{n=1}^M\int_{t_{n-1}}^{t_n}\|\vu(s)-\vu^{n-1}\|^2_{\vL^2}\|\ve^{n-1}\|^2_{\vL^2}\, ds\biggr]\\\nonumber
		&\qquad \leq \frac12 \mE\bigl[\max_{1\leq \ell\leq M}\|\ve^{\ell}\|^4_{\vL^2}\bigr] + C\mE\biggl[\sum_{n=1}^M\int_{t_{n-1}}^{t_n}\|\vu(s)-\vu(t_{n-1})\|^2_{\vL^2}\|\ve^{n-1}\|^2_{\vL^2}\, ds\biggr]\\\nonumber
		&\qquad \qquad +C\mE\biggl[\sum_{n=1}^M\int_{t_{n-1}}^{t_n}\|\ve^{n-1}\|^4_{\vL^2}\, ds\biggr]
		\\\nonumber
		&\qquad \leq \frac12 \mE\bigl[\max_{1\leq \ell\leq M}\|\ve^{\ell}\|^4_{\vL^2}\bigr] 
		  +Ck\sum_{n=1}^M \mE\bigl[\|\ve^{n-1}\|^4_{\vL^2}\bigr]\\\nonumber
		&\qquad\qquad +C\sum_{n=1}^M\int_{t_{n-1}}^{t_n}\bigl(\mE\bigl[\|\vu(s)-\vu(t_{n-1})\|^4_{\vL^2}\bigr]\bigr)^{1/2}\bigl(\mE\bigl[\|\ve^{n-1}\|^4_{\vL^2}\bigr]\bigr)^{1/2}\, ds\\\nonumber
		&\qquad \leq  \frac12 \mE\bigl[\max_{1\leq \ell\leq M}\|\ve^{\ell}\|^4_{\vL^2}\bigr] + Ck^{4\gamma}.
	\end{align}
	Here, we have used  \eqref{eq2.20a} and \eqref{eq3.28} to obtain the last inequality of \eqref{eq3.31}. 

Combining \eqref{eq3.31} and \eqref{eq3.30} yields the desired estimate for the case $q = 2$.

To prove the general case $3\leq q<\infty$, for the sake of notation brevity but without loss of the generality, we let $f = 0$.
Our first task is to show the following inequality by induction for any $1\leq q < \infty$: there exists a constant $c_q > 0$ such that  holds $\mP$-a.s.
\begin{align}\label{eq332}
	\frac{1}{2^q}&\bigl[\|\ve^{n+1}\|^{2^q}_{\vL^2} - \|\ve^n\|^{2^q}_{\vL^2}\bigr] \\\nonumber
	&\leq c_q k\|\ve^n\|^{2^q}_{\vL^2} + c_q \int_{t_n}^{t_{n+1}}\|\nab(\vu(s) - \vu(t_{n+1}))\|^{2^q}_{\vL^2}\, ds \\\nonumber
	&+ c_q \sum_{j=1}^q \Bigl\|\int_{t_n}^{t_{n+1}}\bigl(\vB(\vu(s)) - \vB(\vu^n)\bigr)\, dW(s)\Bigr\|^{2^j}_{\vL^2}\|\ve^n\|^{2^q - 2^j}_{\vL^2} \\\nonumber
	&+ \Bigl(\int_{t_n}^{t_{n+1}}\bigl(\vB(\vu(s)) - \vB(\vu^n)\bigr)\, dW(s),\ve^n\Bigr)\|\ve^n\|^{2^q - 2}_{\vL^2}, 
\end{align}
which has been proved to hold for $q=2,3$. 

Suppose that \eqref{eq332} holds for any fixed integer $q (>3)$ and we want to show it also
holds for $q+1$. To the end, multiplying \eqref{eq332} by $\|\ve^{n+1}\|^{2^q}_{\vL^2}$ and use again the identity $2a(a - b) = a^2 - b^2 + (a-b)^2$ we obtain
\begin{align}\label{eq333}
	\frac{1}{2^{q+1}}&\bigl[\|\ve^{n+1}\|^{2^{q+1}}_{\vL^2} - \|\ve^n\|^{2^{q+1}}_{\vL^2}\bigr]  + \frac{1}{2^{q+1}}\bigl(\|\ve^{n+1}\|^{2^q}_{\vL^2} - \|\ve^n\|^{2^q}_{\vL^2}\bigr)^2\\\nonumber
&\leq c_q k\|\ve^n\|^{2^q}_{\vL^2}\|\ve^{n+1}\|^{2^q}_{\vL^2} + c_q \int_{t_n}^{t_{n+1}}\|\nab(\vu(s) - \vu(t_{n+1}))\|^{2^q}_{\vL^2}\, ds\|\ve^{n+1}\|^{2^q}_{\vL^2} \\\nonumber
&+ c_q \sum_{j=1}^q \Bigl\|\int_{t_n}^{t_{n+1}}\bigl(\vB(\vu(s)) - \vB(\vu^n)\bigr)\, dW(s)\Bigr\|^{2^j}_{\vL^2}\|\ve^n\|^{2^q - 2^j}_{\vL^2}\|\ve^{n+1}\|^{2^q}_{\vL^2} \\\nonumber
&+ \Bigl(\int_{t_n}^{t_{n+1}}\bigl(\vB(\vu(s)) - \vB(\vu^n)\bigr)\, dW(s),\ve^n\Bigr)\|\ve^n\|^{2^q - 2}_{\vL^2}\|\ve^{n+1}\|^{2^q}_{\vL^2}\\\nonumber
&:= {\tt I + II + III + IV}.
\end{align}
For some $\delta_1, \delta_2 >0$, we have
\begin{align}\label{eq334}
{\tt I} &= c_q k\|\ve^n\|^{2^q}_{\vL^2}\bigl(\|\ve^{n+1}\|^{2^q}_{\vL^2} - \|\ve^{n}\|^{2^q}_{\vL^2}\bigr) + c_qk\|\ve^n\|^{2^{q+1}}_{\vL^2}\\\nonumber
&\leq \frac{c^2_q k^2}{4\delta_1}\|\ve^n\|^{2^{q+1}}_{\vL^2} + \delta_1\bigl(\|\ve^{n+1}\|^{2^q}_{\vL^2} - \|\ve^n\|^{2^q}_{\vL^2}\bigr)^2 + c_qk\|\ve^n\|^{2^{q+1}}_{\vL^2}\\\nonumber
&= \Bigl(\frac{c_q^2k}{4\delta_1} + c_q\Bigr)k\|\ve^n\|^{2^{q+1}}_{\vL^2} + \delta_1\bigl(\|\ve^{n+1}\|^{2^q}_{\vL^2} - \|\ve^n\|^{2^q}_{\vL^2}\bigr)^2.
\end{align}
\begin{align}\label{eq335}
	{\tt II} &= c_q \int_{t_n}^{t_{n+1}}\|\nab(\vu(s) - \vu(t_{n+1}))\|^{2^q}_{\vL^2}\, ds\bigl(\|\ve^{n+1}\|^{2^q}_{\vL^2} - \|\ve^{n}\|^{2^q}_{\vL^2}\bigr) \\\nonumber
	&\qquad+ c_q \int_{t_n}^{t_{n+1}}\|\nab(\vu(s) - \vu(t_{n+1}))\|^{2^q}_{\vL^2}\, ds \|\ve^{n}\|^{2^q}_{\vL^2}\\\nonumber
	&\leq \frac{1}{4\delta_2}\Bigl(c_q\int_{t_n}^{t_{n+1}}\|\nab(\vu(s) - \vu(t_{n+1}))\|^{2^q}_{\vL^2}\, ds\Bigr)^2 + \delta_2\bigl(\|\ve^{n+1}\|^{2^q}_{\vL^2} - \|\ve^n\|^{2^q}_{\vL^2}\bigr)^2\\\nonumber
	&\qquad+ c_q\int_{t_n}^{t_{n+1}}\|\nab(\vu(s) - \vu(t_{n+1}))\|^{2^{q+1}}_{\vL^2}\, ds + c_q k \|\ve^n\|^{2^{q+1}}_{\vL^2}\\\nonumber
	&\leq \frac{c^2_q k}{4\delta_2} \int_{t_n}^{t_{n+1}}\|\nab(\vu(s) - \vu(t_{n+1}))\|^{2^{q+1}}_{\vL^2}\, ds + \delta_2\bigl(\|\ve^{n+1}\|^{2^q}_{\vL^2} - \|\ve^n\|^{2^q}_{\vL^2}\bigr)^2\\\nonumber
	&\qquad+ c_q\int_{t_n}^{t_{n+1}}\|\nab(\vu(s) - \vu(t_{n+1}))\|^{2^{q+1}}_{\vL^2}\, ds + c_q k \|\ve^n\|^{2^{q+1}}_{\vL^2}\\\nonumber
	&= \Bigl(\frac{c^2_q k}{4\delta_2} + c_q\Bigr) \int_{t_n}^{t_{n+1}}\|\nab(\vu(s) - \vu(t_{n+1}))\|^{2^{q+1}}_{\vL^2}\, ds \\\nonumber
	&\qquad+ \delta_2\bigl(\|\ve^{n+1}\|^{2^q}_{\vL^2} - \|\ve^n\|^{2^q}_{\vL^2}\bigr)^2 +  c_q k \|\ve^n\|^{2^{q+1}}_{\vL^2}.
\end{align}
For $\alpha_1, \cdots, \alpha_q > 0$ we have
\begin{align}\label{eq336}
	{\tt III} &= c_q \sum_{j=1}^q \Bigl\|\int_{t_n}^{t_{n+1}}\bigl(\vB(\vu(s)) - \vB(\vu^n)\bigr)\, dW(s)\Bigr\|^{2^j}_{\vL^2}\|\ve^n\|^{2^q - 2^j}_{\vL^2}\bigl(\|\ve^{n+1}\|^{2^q}_{\vL^2} - \|\ve^{n}\|^{2^q}_{\vL^2}\bigr) \\\nonumber
	&\qquad+c_q \sum_{j=1}^q \Bigl\|\int_{t_n}^{t_{n+1}}\bigl(\vB(\vu(s)) - \vB(\vu^n)\bigr)\, dW(s)\Bigr\|^{2^j}_{\vL^2}\|\ve^n\|^{2^q - 2^j}_{\vL^2}\|\ve^{n}\|^{2^q}_{\vL^2}\\\nonumber
	&\leq \sum_{j=1}^q \frac{c^2_q}{4\alpha_j} \Bigl\|\int_{t_n}^{t_{n+1}}\bigl(\vB(\vu(s)) - \vB(\vu^n)\bigr)\, dW(s)\Bigr\|^{2^{j+1}}_{\vL^2}\|\ve^n\|^{2^{q+1} - 2^{j+1}}_{\vL^2}\\\nonumber
	&\qquad+ \sum_{j=1}^q \alpha_j \bigl(\|\ve^{n+1}\|^{2^q}_{\vL^2} - \|\ve^{n}\|^{2^q}_{\vL^2}\bigr)^2\\\nonumber
	&\qquad+ c_q \sum_{j=1}^q \Bigl\|\int_{t_n}^{t_{n+1}}\bigl(\vB(\vu(s)) - \vB(\vu^n)\bigr)\, dW(s)\Bigr\|^{2^j}_{\vL^2}\|\ve^n\|^{2^{q+1} - 2^j}_{\vL^2}.
\end{align}
Similarly, for some $\delta_3 > 0$ we have
\begin{align}\label{eq337}
	{\tt IV} &= \Bigl(\int_{t_n}^{t_{n+1}}\bigl(\vB(\vu(s)) - \vB(\vu^n)\bigr)\, dW(s),\ve^n\Bigr)\|\ve^n\|^{2^q - 2}_{\vL^2}\bigl(\|\ve^{n+1}\|^{2^q}_{\vL^2} - \|\ve^{n}\|^{2^q}_{\vL^2}\bigr)\\\nonumber
	&\qquad + \Bigl(\int_{t_n}^{t_{n+1}}\bigl(\vB(\vu(s)) - \vB(\vu^n)\bigr)\, dW(s),\ve^n\Bigr)\|\ve^n\|^{2^q - 2}_{\vL^2}\|\ve^{n}\|^{2^q}_{\vL^2}\\\nonumber
	&\leq \frac{1}{4\delta_3}\Bigl\|\int_{t_n}^{t_{n+1}}\bigl(\vB(\vu(s)) - \vB(\vu^n)\bigr)\, dW(s)\Bigr\|^2_{\vL^2}\|\ve^n\|^2_{\vL^2}\|\ve^n\|^{2^{q+1} - 4}_{\vL^2} \\\nonumber
	&\qquad+ \delta_3\bigl(\|\ve^{n+1}\|^{2^q}_{\vL^2} - \|\ve^n\|^{2^q}_{\vL^2}\bigr)^2\\\nonumber
	&\qquad + \Bigl(\int_{t_n}^{t_{n+1}}\bigl(\vB(\vu(s)) - \vB(\vu^n)\bigr)\, dW(s),\ve^n\Bigr)\|\ve^n\|^{2^{q+1} - 2}_{\vL^2}\\\nonumber
	&= \frac{1}{4\delta_3}\Bigl\|\int_{t_n}^{t_{n+1}}\bigl(\vB(\vu(s)) - \vB(\vu^n)\bigr)\, dW(s)\Bigr\|^2_{\vL^2}\|\ve^n\|^{2^{q+1} - 2}_{\vL^2} \\\nonumber
	&\qquad+ \delta_3\bigl(\|\ve^{n+1}\|^{2^q}_{\vL^2} - \|\ve^n\|^{2^q}_{\vL^2}\bigr)^2\\\nonumber
	&\qquad + \Bigl(\int_{t_n}^{t_{n+1}}\bigl(\vB(\vu(s)) - \vB(\vu^n)\bigr)\, dW(s),\ve^n\Bigr)\|\ve^n\|^{2^{q+1} - 2}_{\vL^2}.
\end{align}
Substitute the estimates from \eqref{eq334}--\eqref{eq337} into \eqref{eq333} we obtain
\begin{align}\label{eq338}
	&\frac{1}{2^{q+1}}\bigl[\|\ve^{n+1}\|^{2^{q+1}}_{\vL^2} - \|\ve^n\|^{2^{q+1}}_{\vL^2}\bigr] \\\nonumber
	&\qquad\qquad+ \Bigl(\frac{1}{2^{q+1}} - \delta_1 -\delta_2 - \delta_3 - \alpha\Bigr)\bigl(\|\ve^{n+1}\|^{2^q}_{\vL^2} - \|\ve^n\|^{2^q}_{\vL^2}\bigr)^2\\\nonumber
	&\leq \Bigl(\frac{c_q^2k}{4\delta_1} + 2c_q\Bigr)k\|\ve^n\|^{2^{q+1}}_{\vL^2} + \Bigl(\frac{c^2_q k}{4\delta_2} + c_q\Bigr) \int_{t_n}^{t_{n+1}}\|\nab(\vu(s) - \vu(t_{n+1}))\|^{2^{q+1}}_{\vL^2}\, ds\\\nonumber
	&+ \sum_{j=1}^q \frac{c^2_q}{4\alpha_j} \Bigl\|\int_{t_n}^{t_{n+1}}\bigl(\vB(\vu(s)) - \vB(\vu^n)\bigr)\, dW(s)\Bigr\|^{2^{j+1}}_{\vL^2}\|\ve^n\|^{2^{q+1} - 2^{j+1}}_{\vL^2}\\\nonumber
	&+c_q \sum_{j=1}^q \Bigl\|\int_{t_n}^{t_{n+1}}\bigl(\vB(\vu(s)) - \vB(\vu^n)\bigr)\, dW(s)\Bigr\|^{2^j}_{\vL^2}\|\ve^n\|^{2^{q+1} - 2^j}_{\vL^2} \\\nonumber
	&+ \frac{1}{4\delta_3}\Bigl\|\int_{t_n}^{t_{n+1}}\bigl(\vB(\vu(s)) - \vB(\vu^n)\bigr)\, dW(s)\Bigr\|^2_{\vL^2}\|\ve^n\|^{2^{q+1} - 2}_{\vL^2} \\\nonumber
	&+\Bigl(\int_{t_n}^{t_{n+1}}\bigl(\vB(\vu(s)) - \vB(\vu^n)\bigr)\, dW(s),\ve^n\Bigr)\|\ve^n\|^{2^{q+1} - 2}_{\vL^2},
\end{align}
where $\displaystyle \alpha = \sum_{j=1}^q \alpha_j>0$. 

Now, we choose $\delta_1,\delta_2,\delta_3, \alpha>0$ such that $\frac{1}{2^{q+1}} - \delta_1 -\delta_2 - \delta_3 - \alpha >0$ so that the second term on the left side of \eqref{eq338} is positive and can be dropped at the end. Next, after rearranging terms on the right side, \eqref{eq338} infers that
\begin{align}\label{eq339}
&\frac{1}{2^{q+1}}\bigl[\|\ve^{n+1}\|^{2^{q+1}}_{\vL^2} - \|\ve^n\|^{2^{q+1}}_{\vL^2}\bigr] \\\nonumber
&\qquad\qquad+ \Bigl(\frac{1}{2^{q+1}} - \delta_1 -\delta_2 - \delta_3 - \alpha\Bigr)\bigl(\|\ve^{n+1}\|^{2^q}_{\vL^2} - \|\ve^n\|^{2^q}_{\vL^2}\bigr)^2\\\nonumber
&\leq \Bigl(\frac{c_q^2k}{4\delta_1} + 2c_q\Bigr)k\|\ve^n\|^{2^{q+1}}_{\vL^2} + \Bigl(\frac{c^2_q k}{4\delta_2} + c_q\Bigr) \int_{t_n}^{t_{n+1}}\|\nab(\vu(s) - \vu(t_{n+1}))\|^{2^{q+1}}_{\vL^2}\, ds\\\nonumber
&\qquad+ \max_{1\leq j \leq q} \frac{c^2_q}{4\alpha_j}\sum_{j=1}^q  \Bigl\|\int_{t_n}^{t_{n+1}}\bigl(\vB(\vu(s)) - \vB(\vu^n)\bigr)\, dW(s)\Bigr\|^{2^{j+1}}_{\vL^2}\|\ve^n\|^{2^{q+1} - 2^{j+1}}_{\vL^2}\\\nonumber
&\qquad+\Bigl(c_q + \frac{1}{4\delta_3}\Bigr) \sum_{j=1}^q \Bigl\|\int_{t_n}^{t_{n+1}}\bigl(\vB(\vu(s)) - \vB(\vu^n)\bigr)\, dW(s)\Bigr\|^{2^j}_{\vL^2}\|\ve^n\|^{2^{q+1} - 2^j}_{\vL^2} \\\nonumber
&\qquad+\Bigl(\int_{t_n}^{t_{n+1}}\bigl(\vB(\vu(s)) - \vB(\vu^n)\bigr)\, dW(s),\ve^n\Bigr)\|\ve^n\|^{2^{q+1} - 2}_{\vL^2}\\\nonumber
&\leq \Bigl(\frac{c_q^2k}{4\delta_1} + 2c_q\Bigr)k\|\ve^n\|^{2^{q+1}}_{\vL^2} + \Bigl(\frac{c^2_q k}{4\delta_2} + c_q\Bigr) \int_{t_n}^{t_{n+1}}\|\nab(\vu(s) - \vu(t_{n+1}))\|^{2^{q+1}}_{\vL^2}\, ds\\\nonumber
&\qquad+ \max_{1\leq j \leq q} \frac{c^2_q}{4\alpha_j}\sum_{j=1}^{q+1}  \Bigl\|\int_{t_n}^{t_{n+1}}\bigl(\vB(\vu(s)) - \vB(\vu^n)\bigr)\, dW(s)\Bigr\|^{2^{j}}_{\vL^2}\|\ve^n\|^{2^{q+1} - 2^{j}}_{\vL^2}\\\nonumber
&\qquad+\Bigl(c_q + \frac{1}{4\delta_3}\Bigr) \sum_{j=1}^{q+1} \Bigl\|\int_{t_n}^{t_{n+1}}\bigl(\vB(\vu(s)) - \vB(\vu^n)\bigr)\, dW(s)\Bigr\|^{2^j}_{\vL^2}\|\ve^n\|^{2^{q+1} - 2^j}_{\vL^2} \\\nonumber
&\qquad+\Bigl(\int_{t_n}^{t_{n+1}}\bigl(\vB(\vu(s)) - \vB(\vu^n)\bigr)\, dW(s),\ve^n\Bigr)\|\ve^n\|^{2^{q+1} - 2}_{\vL^2}\\\nonumber
&= \Bigl(\frac{c_q^2k}{4\delta_1} + 2c_q\Bigr)k\|\ve^n\|^{2^{q+1}}_{\vL^2} + \Bigl(\frac{c^2_q k}{4\delta_2} + c_q\Bigr) \int_{t_n}^{t_{n+1}}\|\nab(\vu(s) - \vu(t_{n+1}))\|^{2^{q+1}}_{\vL^2}\, ds\\\nonumber
&\qquad+\Bigl(c_q + \frac{1}{4\delta_3} + \max_{1\leq j \leq q} \frac{c^2_q}{4\alpha_j}\Bigr) \sum_{j=1}^{q+1} \Bigl\|\int_{t_n}^{t_{n+1}}\bigl(\vB(\vu(s)) - \vB(\vu^n)\bigr)\, dW(s)\Bigr\|^{2^j}_{\vL^2}\|\ve^n\|^{2^{q+1} - 2^j}_{\vL^2} \\\nonumber
&\qquad+\Bigl(\int_{t_n}^{t_{n+1}}\bigl(\vB(\vu(s)) - \vB(\vu^n)\bigr)\, dW(s),\ve^n\Bigr)\|\ve^n\|^{2^{q+1} - 2}_{\vL^2}\\\nonumber
&\leq c_{q+1}k\|\ve^n\|^{2^{q+1}}_{\vL^2} + c_{q+1} \int_{t_n}^{t_{n+1}}\|\nab(\vu(s) - \vu(t_{n+1}))\|^{2^{q+1}}_{\vL^2}\, ds\\\nonumber
&\qquad+c_{q+1} \sum_{j=1}^{q+1} \Bigl\|\int_{t_n}^{t_{n+1}}\bigl(\vB(\vu(s)) - \vB(\vu^n)\bigr)\, dW(s)\Bigr\|^{2^j}_{\vL^2}\|\ve^n\|^{2^{q+1} - 2^j}_{\vL^2} \\\nonumber
&\qquad+\Bigl(\int_{t_n}^{t_{n+1}}\bigl(\vB(\vu(s)) - \vB(\vu^n)\bigr)\, dW(s),\ve^n\Bigr)\|\ve^n\|^{2^{q+1} - 2}_{\vL^2},
\end{align}
where $$c_{q+1} = \max\Bigl\{\frac{c_q^2k}{4\delta_1} + 2c_q,\, \frac{c^2_q k}{4\delta_2} + c_q,\, c_q + \frac{1}{4\delta_3} + \max_{1\leq j \leq q} \frac{c^2_q}{4\alpha_j}\Bigr\}.$$ 
Hence, the proof of \eqref{eq332} is complete.

Next, we prove the statement of the theorem for the general case $3\leq q <\infty$, which will be carried out using the same technique as that in the proof of the 4th moment (i.e. $q=2$). Taking the expectation on \eqref{eq332} and using the martingale property of It\^o integrals and the H\"older continuity in Lemma \ref{lemma2.3}, we obtain
\begin{align}\label{eq340}
	\frac{1}{2^q}&\mE\bigl[\|\ve^{n+1}\|^{2^q}_{\vL^2} - \|\ve^n\|^{2^q}_{\vL^2}\bigr]\\\nonumber
	&\leq c_q k \mE\bigl[\|\ve^n\|^{2^q}_{\vL^2}\bigr] + c_q k^{1+2^q \gamma} \\\nonumber
	&\qquad+ c_q \sum_{j=1}^q \mE\Bigl[\Bigl\|\int_{t_n}^{t_{n+1}}\bigl(\vB(\vu(s)) - \vB(\vu^n)\bigr)\, dW(s)\Bigr\|^{2^j}_{\vL^2}\|\ve^n\|^{2^q - 2^j}_{\vL^2}\Bigr]\\\nonumber
	&\leq c_q k \mE\bigl[\|\ve^n\|^{2^q}_{\vL^2}\bigr] + c_q k^{1+2^q \gamma} \\\nonumber
	&\qquad+ c_q \sum_{j=1}^q C_j\mE\Bigl[\int_{t_n}^{t_{n+1}}\bigl\|\vB(\vu(s)) - \vB(\vu^n)\bigr\|^{2^j}_{\vL^2}\, ds\|\ve^n\|^{2^q - 2^j}_{\vL^2}\Bigr],
\end{align}
where the last inequality of \eqref{eq340} is obtained by using (ii) of Lemma \ref{lemma23}. The last term on the right side of \eqref{eq340} cab be bounded as follows
\begin{align}\label{eq341}
	&c_q \sum_{j=1}^qC_j \mE\Bigl[\int_{t_n}^{t_{n+1}}\bigl\|\vB(\vu(s)) - \vB(\vu^n)\bigr\|^{2^j}_{\vL^2}\, ds\|\ve^n\|^{2^q - 2^j}_{\vL^2}\Bigr] \\\nonumber
	&\leq c_q \sum_{j=1}^q C_j\mE\Bigl[\int_{t_n}^{t_{n+1}}\bigl\|\vB(\vu(s)) - \vB(\vu(t_n))\bigr\|^{2^j}_{\vL^2}\, ds\|\ve^n\|^{2^q - 2^j}_{\vL^2}\Bigr] \\\nonumber
	&\qquad+ c_q \sum_{j=1}^q C_j\mE\Bigl[\int_{t_n}^{t_{n+1}}\bigl\|\vB(\vu(t_n)) - \vB(\vu^n)\bigr\|^{2^j}_{\vL^2}\, ds\|\ve^n\|^{2^q - 2^j}_{\vL^2}\Bigr]\\\nonumber
	&\leq  c_q \sum_{j=1}^q C_j\mE\Bigl[\int_{t_n}^{t_{n+1}}\|\vu(s) - \vu(t_n)\|^{2^j}_{\vL^2}\, ds\|\ve^n\|^{2^q - 2^j}_{\vL^2}\Bigr] \\\nonumber
	&\qquad+ c_q \sum_{j=1}^q C_j\mE\Bigl[\int_{t_n}^{t_{n+1}}\|\ve^n\|^{2^j}_{\vL^2}\, ds\|\ve^n\|^{2^q - 2^j}_{\vL^2}\Bigr]\\\nonumber
	&=  c_q \sum_{j=1}^{q-1} C_j\mE\Bigl[\int_{t_n}^{t_{n+1}}\|\vu(s) - \vu(t_n)\|^{2^j}_{\vL^2}\, ds\|\ve^n\|^{2^q - 2^j}_{\vL^2}\Bigr] \\\nonumber
	&\qquad+ \tilde{c}_q \mE\Bigl[\int_{t_n}^{t_{n+1}} \|\vu(s) - \vu(t_n)\|^{2^q}_{\vL^2}\, ds\Bigr] + \tilde{c}_qk\mE\bigl[\|\ve^n\|^{2^q}_{\vL^2}\bigr]\\\nonumber
	&\leq c_q \sum_{j=1}^{q-1} C_j\mE\Bigl[\int_{t_n}^{t_{n+1}}\|\vu(s) - \vu(t_n)\|^{2^j}_{\vL^2}\, ds\|\ve^n\|^{2^q - 2^j}_{\vL^2}\Bigr] \\\nonumber 
	&\qquad+ \tilde{c}_q k^{1+2^q\gamma} + \tilde{c}_qk\mE\bigl[\|\ve^n\|^{2^q}_{\vL^2}\bigr].
\end{align}
In addition, for each $1 \leq j < q$, using Young's inequality with the conjugates $a = 2^{q-j}$ and $b =\frac{2^{q-j}}{2^{q-j} - 1}$ to the first term on the right side of \eqref{eq341}, we get
\begin{align}\label{eq342}
	 &c_q \sum_{j=1}^{q-1} C_j\mE\Bigl[\int_{t_n}^{t_{n+1}}\|\vu(s) - \vu(t_n)\|^{2^j}_{\vL^2}\, ds\|\ve^n\|^{2^q - 2^j}_{\vL^2}\Bigr] \\\nonumber
	 &\leq c_q\sum_{j=1}^{q-1}\frac{C_j}{a}\mE\Bigl[\int_{t_n}^{t_{n+1}}\|\vu(s) - \vu(t_n)\|^{a2^j}_{\vL^2}\, ds\Bigr] + c_q\sum_{j=1}^{q-1}\frac{C_j}{b}k\mE\bigl[\|\ve^n\|^{(2^q-2^j)b}_{\vL^2}\bigr]\\\nonumber
	 &\leq \tilde{c}_q k^{1+2^q\gamma} + \tilde{c}_q k\mE\bigl[\|\ve^q\|^{2^q}_{\vL^2}\bigr], 
\end{align}

Finally, substituting \eqref{eq342} to \eqref{eq341} and then combining it with \eqref{eq340} yield 
\begin{align}\label{eq343}
	\frac{1}{2^{q}}\mE\bigl[\|\ve^{n+1}\|^{2^q}_{\vL^2} - \|\ve^n\|^{2^q}_{\vL^2}\bigr] \leq \tilde{c}_q k\mE\bigl[\|\ve^n\|^{2^q}_{\vL^2}\bigr] + \tilde{c}_q k^{1+ 2^q\gamma}.
\end{align}
Summing \eqref{eq343} in $n$ and then using the discrete Gronwall inequality, we get
\begin{align}\label{eq344}
	\frac{1}{2^q}\mE\bigl[\|\ve^{\ell}\|^{2^q}_{\vL^2}\bigr] &\leq \tilde{c}_q k \sum_{n=1}^{\ell}\mE\bigl[\|\ve^{n-1}\|^{2^q}_{\vL^2}\bigr] + \tilde{c}_q C_{t_{\ell}} k^{2^q \gamma}\\\nonumber
	&\leq \tilde{c}_q C_{t_{\ell}} k^{2^q \gamma}\, e^{\tilde{c}_q t_{\ell}}.
\end{align}
Thus, 
\begin{align*}
\max_{1\leq\ell\leq M}
\mE\bigl[\|\ve^{\ell}\|^{2^q}_{\vL^2}\bigr] \leq Ck^{ 2^q\gamma}.
\end{align*}
Repeating the last part of the proof of the case $q=2$ we subsequently obtain
	\begin{align*}
		\mE\bigl[\max_{1\leq\ell\leq M}\|\ve^{\ell}\|^{2^q}_{\vL^2}\bigr] \leq Ck^{ 2^q\gamma}.
	\end{align*}
The proof is complete.
\end{proof}

\begin{corollary}\label{cor3.4} 
	Under the assumptions of Theorem \ref{theorem_semi}. For any real numbers $2\leq q <\infty$ and $0 < \gamma < \frac12$, there holds
	\begin{align}\label{eq3.32}
	\mE\bigl[\max_{1\leq n\leq M}\|\vu(t_n) - \vu^n\|^{q}_{\vL^2}\bigr] \leq C_1\, k^{\gamma {q}},
	\end{align}
	where $C_1 = C_1(T,q,\vu_0,\vf)$.
\end{corollary}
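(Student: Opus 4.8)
The plan is to deduce the corollary directly from Theorem~\ref{theorem_semi} by a simple moment-interpolation argument. Theorem~\ref{theorem_semi} establishes the estimate only for the dyadic exponents $2^q$ with $q\in\mathbb{N}$ (that is, for moment orders $2,4,8,\ldots$), whereas the corollary asserts the analogous bound with the \emph{same} rate $k^{\gamma q}$ for \emph{every} real exponent $q\in[2,\infty)$. The gap between these two statements is purely one of filling in the non-dyadic exponents, and this is achieved by the monotonicity of Bochner norms.

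First I would abbreviate $Y := \max_{1\leq n\leq M}\|\vu(t_n)-\vu^n\|_{\vL^2}$, which is a nonnegative random variable. The single tool needed is Lyapunov's inequality: for any $0 < r \leq s < \infty$ one has $\bigl(\mE[Y^r]\bigr)^{1/r} \leq \bigl(\mE[Y^s]\bigr)^{1/s}$, which follows from Jensen's inequality applied to the convex map $t\mapsto t^{s/r}$ (valid since $s/r\geq 1$). Given a real number $q\in[2,\infty)$, I would then choose the smallest integer $m\geq 1$ for which $2^m \geq q$, so that $q$ is dominated by a dyadic exponent covered by Theorem~\ref{theorem_semi}.

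Applying Lyapunov's inequality with $r=q$ and $s=2^m$, and then invoking the dyadic estimate \eqref{eq310} with the integer exponent $m$, yields
\begin{align*}
	\mE\bigl[Y^q\bigr] \leq \bigl(\mE\bigl[Y^{2^m}\bigr]\bigr)^{q/2^m} \leq \bigl(C_1\, k^{2^m\gamma}\bigr)^{q/2^m} = C_1^{q/2^m}\, k^{\gamma q},
\end{align*}
which is precisely the desired bound \eqref{eq3.32}, the constant $C_1^{q/2^m}$ still depending only on $T,q,\vu_0,\vf$ (through the choice of $m$).

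There is essentially no serious obstacle in this argument: the whole content of the corollary is the repackaging of a discrete (dyadic) family of moment estimates into a continuum of exponents, and the only point requiring even minimal care is to select the dominating dyadic exponent $2^m\geq q$ so that Lyapunov's inequality is applied in the correct direction, bounding the lower moment of order $q$ by the higher moment of order $2^m$. No further stochastic analysis or finite-element estimates are needed beyond Theorem~\ref{theorem_semi} itself.
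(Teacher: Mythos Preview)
Your proposal is correct and takes essentially the same approach as the paper, which simply states that the proof follows from H\"older's inequality and Theorem~\ref{theorem_semi}. Your Lyapunov/Jensen argument is precisely the instance of H\"older's inequality needed to interpolate from the dyadic moments $2^m$ down to an arbitrary real $q\in[2,\infty)$.
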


\begin{proof}
	The proof follows from using H\"older inequality and Theorem \ref{theorem_semi}.
\end{proof}

\begin{theorem}\label{theorem_semi_veolocityH1}
	Under the assumptions of Theorem \ref{theorem_semi}, there holds  for $2\leq q <\infty$ and $0 < \gamma < \frac12$ 
	\begin{align}\label{eq3.34}
		\mE\biggl[\biggl\|\nu k\sum_{n=1}^{M}\nab(\vu(t_n) - \vu^n)\biggr\|_{\vL^2}^q\biggr] \leq C_1 k^{\gamma q},
	\end{align}
	where $C_1 = C_1(T,q,\vu_0,\vf)$.
\end{theorem}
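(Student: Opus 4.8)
The plan is to convert the per-step error equation into a single summed identity and then test it with the time-averaged error itself. I start from \eqref{eq3.4}, which holds for every $\pphi\in\mV$ because the pressure contributions vanish on divergence-free test functions, and I sum it over $n=0,\dots,M-1$. Since $\ve^0=\vu(0)-\vu^0=\vu_0-\vu_0=0$, the difference term telescopes to $(\ve^M,\pphi)$ and the viscous term becomes $\nu k\sum_{n=1}^M(\nab\ve^n,\nab\pphi)$. I then set $\vvarphi:=k\sum_{n=1}^M\ve^n\in\mV$; since $\nab\vvarphi=k\sum_{n=1}^M\nab\ve^n$, the choice $\pphi=\vvarphi$ collapses the viscous term into $\nu\|\nab\vvarphi\|_{\vL^2}^2$, which is precisely the target quantity up to the factor $\nu^q$. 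This yields
\begin{align*}
\nu\|\nab\vvarphi\|_{\vL^2}^2 = -(\ve^M,\vvarphi) + \sum_{n=0}^{M-1}\Big[&\nu\!\int_{t_n}^{t_{n+1}}\!\!(\nab(\vu(s)-\vu(t_{n+1})),\nab\vvarphi)\,ds + \int_{t_n}^{t_{n+1}}\!\!(\vf(s)-\vf(t_{n+1}),\vvarphi)\,ds\\ &+ \Big(\int_{t_n}^{t_{n+1}}\!\!(\vB(\vu(s))-\vB(\vu^n))\,dW(s),\vvarphi\Big)\Big].
\end{align*}

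The hard part is the stochastic term, where the usual device of invoking the martingale property fails: the test function $\vvarphi$ depends on the entire trajectory and is therefore not $\cF_{t_n}$-adapted, so it cannot be moved inside the It\^o integral. My remedy is to sidestep the martingale structure entirely. Using bilinearity I collect the increments into one adapted integrand, writing the stochastic sum as $(\pmb{\Xi},\vvarphi)$ with $\pmb{\Xi}:=\int_0^T(\vB(\vu(s))-\vB(\vu^{n(s)}))\,dW(s)$, where $n(s)$ is the index satisfying $s\in[t_{n(s)},t_{n(s)+1})$. Then Cauchy--Schwarz together with the Poincar\'e inequality $\|\vvarphi\|_{\vL^2}\le C\|\nab\vvarphi\|_{\vL^2}$ (valid since all functions have vanishing mean) gives $(\pmb{\Xi},\vvarphi)\le\frac{\nu}{8}\|\nab\vvarphi\|_{\vL^2}^2 + C\|\pmb{\Xi}\|_{\vL^2}^2$. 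Because $\pmb{\Xi}$ is a \emph{genuine} It\^o integral with adapted integrand, I can apply Lemma \ref{lemma23}(ii) (or the isometry \eqref{ito2} when $q=2$), the Lipschitz bound \eqref{eq2.6a}, the splitting $\|\vu(s)-\vu^n\|\le\|\vu(s)-\vu(t_n)\|+\|\ve^n\|$, the H\"older continuity \eqref{eq2.20a}, and Corollary \ref{cor3.4} to obtain $\mE[\|\pmb{\Xi}\|_{\vL^2}^q]\le C\sum_{n}\int_{t_n}^{t_{n+1}}\mE[\|\vu(s)-\vu^n\|^q]\,ds\le C\,k^{\gamma q}$.

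The remaining terms are routine. The boundary term $(\ve^M,\vvarphi)$ and the two consistency integrals are each handled by Cauchy--Schwarz and Poincar\'e, absorbing a further multiple of $\frac{\nu}{8}\|\nab\vvarphi\|_{\vL^2}^2$ into the left-hand side and leaving, respectively, $C\|\ve^M\|_{\vL^2}^2$, $C\big(\sum_n\int_{t_n}^{t_{n+1}}\|\nab(\vu(s)-\vu(t_{n+1}))\|_{\vL^2}\,ds\big)^2$, and $C\big(\sum_n\int_{t_n}^{t_{n+1}}\|\vf(s)-\vf(t_{n+1})\|_{\vH^{-1}}\,ds\big)^2$. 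After absorbing, raising to the power $q/2$ and taking expectation, I would bound $\mE[\|\ve^M\|_{\vL^2}^q]\le Ck^{\gamma q}$ by Corollary \ref{cor3.4}; for the two consistency sums I would use the discrete Jensen inequality $\big(\sum_{n=0}^{M-1}a_n\big)^q\le M^{q-1}\sum_n a_n^q$ together with H\"older in the inner integral, and then the H\"older-continuity estimate \eqref{eq2.20a} for the gradient term and the $C^{1/2}$-regularity of $\vf$ for the forcing term. Each summand then carries a factor $k^{q+\gamma q}$ (resp.\ $k^{q+q/2}$), and the prefactor $M^q=(T/k)^q$ exactly cancels the surplus $k^q$, producing $Ck^{\gamma q}$ (resp.\ $Ck^{q/2}\le Ck^{\gamma q}$ since $\gamma<\tfrac12$). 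Collecting all contributions gives $\mE[\|\nab\vvarphi\|_{\vL^2}^q]\le Ck^{\gamma q}$, and multiplying by $\nu^q$ establishes \eqref{eq3.34}.
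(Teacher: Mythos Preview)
Your proof is correct and follows essentially the same strategy as the paper: sum the per-step error equation \eqref{eq3.4}, test with $\vvarphi=k\sum_{n=1}^M\ve^n$, absorb the $\|\nab\vvarphi\|_{\vL^2}^2$ contributions via Cauchy--Schwarz/Poincar\'e/Young, and control the stochastic term by recognizing it as a single It\^o integral with adapted integrand so that Lemma~\ref{lemma23}(ii) applies. Your explicit discussion of why adaptedness is an issue and how it is resolved, as well as the careful exponent bookkeeping for the consistency terms, are more detailed than the paper's presentation but amount to the same argument.
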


\begin{proof} For the sake of notational brevity, we set $\nu = 1$.
	Applying the summation operator $\sum_{n=1}^{M}$ to \eqref{eq3.4}, we obtain
	\begin{align}\label{eq3.35}
		\bigl(\ve^M, \pphi\bigr) &+ \biggl( k \sum_{n=1}^{M}\nab\ve^n,\nab\pphi\biggr) 
		=\biggl(\sum_{n=1}^{M}\int_{t_{n-1}}^{t_n}\nab\bigl(\vu(s) - \vu(t_n)\bigr)\, ds,\nab\pphi\biggr)\\\nonumber
		&\hskip 1.3in + \biggl(\sum_{n=1}^{M}\int_{t_{n-1}}^{t_n}\bigl(\vf(s) - \vf(t_n)\bigr)\, ds, \pphi\biggr)\\\nonumber
		&\hskip 1.3in +  \biggl(\sum_{n=1}^{M}\int_{t_{n-1}}^{t_n}\bigl(\vB(\vu(s)) - \vB(\vu^{n-1})\bigr)\, dW(s), \pphi\biggr).
	\end{align}
	Setting $\pphi = k \sum_{n=1}^{M}\ve^n$, and using Schwarz, Young, Poincar\'e inequalities, we obtain
	\begin{align}\label{eq3.36}
	\biggl\|k \sum_{n=1}^{M}\nab\ve^n\biggr\|^2_{\vL^2} &\leq C\|\ve^M\|^2_{\vL^2} + C\biggl\|\sum_{n=1}^{M}\int_{t_{n-1}}^{t_n}\nab\bigl(\vu(s) - \vu(t_n)\bigr)\, ds\biggr\|^2_{\vL^2}\\\nonumber
	&\qquad+ C\biggl\|\sum_{n=1}^{M}\int_{t_{n-1}}^{t_n}\bigl(\vf(s) - \vf(t_n)\bigr)\, ds\biggr\|^2_{\vH^{-1}} \\\nonumber
	&\qquad+ C\biggl\|\sum_{n=1}^{M}\int_{t_{n-1}}^{t_n}\bigl(\vB(\vu(s)) - \vB(\vu^{n-1})\bigr)\, dW(s)\biggr\|^2_{\vL^2}.
	\end{align}
	Taking the $\frac{q}{2}$-power followed by expectation on both sides of \eqref{eq3.36}, we get 
		\begin{align}\label{eq3.37}
	\mE\biggl[\biggl\|k \sum_{n=1}^{M}\nab\ve^n\biggr\|^q_{\vL^2}\biggr] &\leq C_q\mE\bigl[\|\ve^M\|^q_{\vL^2}\bigr] \\\nonumber
	&\qquad+ C_q\mE\biggl[\biggl\|\sum_{n=1}^{M}\int_{t_{n-1}}^{t_n}\nab\bigl(\vu(s) - \vu(t_n)\bigr)\, ds\biggr\|^q_{\vL^2}\biggr]\\\nonumber
	&\qquad+ C_q\mE\biggl[\biggl\|\sum_{n=1}^{M}\int_{t_{n-1}}^{t_n}\bigl(\vf(s) - \vf(t_n)\bigr)\, ds\biggr\|^q_{\vH^{-1}}\biggr] \\\nonumber
	&\qquad+ C_q\mE\biggl[\biggl\|\sum_{n=1}^{M}\int_{t_{n-1}}^{t_n}\bigl(\vB(\vu(s)) - \vB(\vu^{n-1})\bigr)\, dW(s)\biggr\|^q_{\vL^2}\biggr]\\\nonumber
	&=:{\tt I + II + III + IV}.
	\end{align}
	
	By using \eqref{eq310}, \eqref{eq2.20a}, and the assumption on $\vf$, we get
	\begin{align}
	{\tt I + II + III} \leq C_q k^{\gamma q}.
	\end{align}
To bound {\tt IV}, by \eqref{ito4} we have
	\begin{align}\label{eq3.39}
		{\tt IV} &\leq C_q\mE\biggl[\sum_{n=1}^M\int_{t_{n-1}}^{t_n}\|\vB(\vu(s)) - \vB(\vu^{n-1})\|^q_{\vL^2}\, ds\biggr]\\\nonumber
		&\leq C_q \mE\biggl[\sum_{n=1}^M\int_{t_{n-1}}^{t_n}\|\vu(s) - \vu(t_{n-1})\|^q_{\vL^2}\, ds\biggr] + C_q\mE\biggl[\sum_{n=1}^M\int_{t_{n-1}}^{t_n}\|\ve^{n-1}\|^q_{\vL^2}\, ds\biggr]\\\nonumber
		&\leq C_q k^{\gamma q}.
	\end{align}
Here, we have used \eqref{eq2.20a} and \eqref{eq310} to obtain the last inequality of \eqref{eq3.39}.   The proof is complete.
\end{proof}

\begin{remark}
	 The second-moment (i.e., $p=2$) error estimate in the $\vH^1$-norm  was obtained for the velocity approximation in \cite{Feng,Feng1}. Theorem \ref{theorem_semi_veolocityH1} proves a weak convergence of the high moments of the error in $\vH^1$-norm. 
	 The difficulty of obtaining the strong convergence of the high moments
	 of the error in $\vH^1$-norm is explained below. After setting $\pphi = \ve^{n+1}$ in \eqref{eq3.4}, using the binomial formula and summing over all $0\leq n < M$, we obtain a similar inequality as that in \eqref{eq3.37} but in strong form, namely,
\begin{align}\label{eq3.40}
\mE\bigl[\|\ve^M\|^q_{\vL^2}\bigr] &+\mE\biggl[\biggl(k \sum_{n=1}^{M}\|\nab\ve^n\|^2_{\vL^2}\biggr)^{q/2}\biggr] \\\nonumber
&\leq  
	 C_q\mE\biggl[\biggl(\sum_{n=1}^{M}\biggl\|\int_{t_{n-1}}^{t_n}\nab\bigl(\vu(s) - \vu(t_n)\bigr)\, ds\biggr\|^2_{\vL^2}\biggr)^{q/2}\biggr]\\\nonumber
	&\qquad+ C_q\mE\biggl[\biggl(\sum_{n=1}^{M}\biggl\|\int_{t_{n-1}}^{t_n}\bigl(\vf(s) - \vf(t_n)\bigr)\, ds\biggr\|^2_{\vH^{-1}}\biggr)^{q/2}\biggr] \\\nonumber
	&\qquad + C_q\mE\biggl[\biggl(\sum_{n=1}^{M}\biggl\|\int_{t_{n-1}}^{t_n}\bigl(\vB(\vu(s)) - \vB(\vu^{n-1})\bigr)\, dW(s)\biggr\|^2_{\vL^2}\biggr)^{q/2}\biggr].
	\end{align}
It is unclear how to bound the noise term on the right-hand side of \eqref{eq3.40}. 
\end{remark}

Finally, we are ready to state our first pathwise error estimate for the velocity approximation,
such an estimate has not been obtained before in the literature. 

\begin{theorem}
	 Assume that the assumptions of Theorem \ref{theorem_semi} hold. Let $2 < q < \infty$ and $0<\gamma < \frac{1}{2}$ such that $\gamma - \frac{1}{q} >0$. Then, for $0 < \gamma_1 < \gamma - \frac{1}{q}$, there exists a random variable $K_1 = K_1(\omega;C_1)$ with $\mE\bigl[|K_1|^{q}\bigr] <\infty$ such that there holds $\mP$-a.s.
	\begin{align}\label{eq3.41}
		\max_{1\leq n \leq M}\|\vu(t_n) - \vu^n\|_{\vL^2}  + \biggl\|\nu k\sum_{n=1}^M\nab\bigl(\vu(t_n) - \vu^n\bigr)\biggr\|_{\vL^2} \leq K_1 k^{\gamma_1}.
	\end{align}
\end{theorem}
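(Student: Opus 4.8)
The plan is to deduce the pathwise bound directly from the high-moment estimates already in hand, via the Kolmogorov-type mechanism that trades a factor $k^{1/q}$ in the convergence rate for an almost-sure bound with an $L^q$-integrable random constant. No new stochastic estimate is needed; the work is in assembling the pieces and tracking exponents.

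First I would package the two high-moment estimates into a single one. Writing
\[
E_k := \max_{1\le n\le M}\|\vu(t_n)-\vu^n\|_{\vL^2} + \Bigl\|\nu k\sum_{n=1}^M \nab(\vu(t_n)-\vu^n)\Bigr\|_{\vL^2}, \qquad k = T/M,
\]
Corollary \ref{cor3.4} at exponent $q$ controls the first term and Theorem \ref{theorem_semi_veolocityH1} controls the second, each by $C_1 k^{\gamma q}$. Combining them with the elementary inequality $(a+b)^q\le 2^{q-1}(a^q+b^q)$ and absorbing constants yields
\[
\mE\bigl[E_k^q\bigr] \le C_1\,k^{\gamma q}, \qquad k = T/M,\ M\in\mathbb{N}.
\]

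Next, following the dyadic-summation argument underlying the proof of Kolmogorov's theorem (Theorem \ref{kolmogorov}), I would take as candidate the supremum of the normalized errors over all admissible step sizes,
\[
K_1 := \sup_{M\ge 1}\frac{E_{T/M}}{(T/M)^{\gamma_1}},
\]
so that $E_k \le K_1 k^{\gamma_1}$ holds $\mP$-a.s.\ for every $k = T/M$ by construction, with a single $K_1$ independent of $M$; this is exactly the form asserted in \eqref{eq3.41}. It then remains to verify $K_1\in L^q(\Ome)$, and this is where the hypotheses enter. Bounding the supremum by the sum (all terms are nonnegative) and inserting the combined moment estimate gives
\[
\mE\bigl[|K_1|^q\bigr] \le \sum_{M\ge 1}\frac{\mE[E_{T/M}^q]}{(T/M)^{\gamma_1 q}} \le C_1\,T^{(\gamma-\gamma_1)q}\sum_{M\ge 1} M^{-(\gamma-\gamma_1)q}.
\]
The series converges precisely when $(\gamma-\gamma_1)q>1$, i.e.\ $\gamma_1 < \gamma - \tfrac1q$, which is exactly the admissible range in the statement and which presupposes $\gamma-\tfrac1q>0$. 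Hence $\mE[|K_1|^q]<\infty$, completing the argument.

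The only genuinely delicate point, and thus the main thing to get right, is the bookkeeping of exponents: the almost-sure rate $\gamma_1$ must lie strictly below $\gamma-1/q$ so that the series is summable, which is the quantitative manifestation of the ``$1/q$ loss'' inherent in passing from a moment bound to a pathwise bound. Everything upstream, namely the H\"older continuity of $\vu$ in Lemma \ref{lemma2.3}, the velocity moment estimate of Theorem \ref{theorem_semi} and Corollary \ref{cor3.4}, and the time-averaged $\vH^1$ estimate of Theorem \ref{theorem_semi_veolocityH1}, has already done the heavy lifting, so the remaining step is purely the Kolmogorov-type summation.
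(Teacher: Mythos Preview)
Your proposal is correct and follows the same approach as the paper, which simply cites Corollary \ref{cor3.4}, Theorem \ref{theorem_semi_veolocityH1}, and the Kolmogorov criterion (Theorem \ref{kolmogorov}) without further detail. In fact, your explicit construction of $K_1$ as a supremum over all admissible step sizes and the series-summability verification of $\mE[|K_1|^q]<\infty$ spells out precisely the Kolmogorov-type mechanism the paper only gestures at, and makes transparent why the constraint $\gamma_1<\gamma-\tfrac1q$ is needed.
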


\begin{proof}
	\eqref{eq3.41} is an immediate consequence of Corollary \ref{cor3.4}, Theorem \ref{theorem_semi_veolocityH1} and Kolmogorov Criteria, Theorem \ref{kolmogorov}.
\end{proof}

\subsection{High moment and pathwise error estimates for the pressure approximation}
In this subsection we derive  high moment and pathwise error estimates for the pressure approximation generated by Algorithm 1.  Once again, the pathwise error estimate is obtained by using the Kolmogorov Criteria, Theorem \ref{kolmogorov} and the high moment error estimates. 

\begin{theorem}\label{theorem_semi_pressure}
	Let $P(t)$ be the pressure process defined in Theorem \ref{thm 2.2} and $\{p^n\}^M_{n=1}$ be the pressure approximation generated by Algorithm 1. Assume that $\vu_0 \in L^{q}(\Ome; \mV)$. Then, for real number $0 < \gamma < \frac12$ and any integer $2\leq q <\infty$, there exists a positive constant $C_2 = C_2(C_1,\beta_0)$ such that for all $1\leq \ell \leq M$
	\begin{align}
		\mE\biggl[\biggl\| P(t_{\ell}) - k\sum_{n=1}^{\ell} p^n\biggr\|^q_{L^2}\biggr] \leq C_2 k^{\gamma q}.
	\end{align}
\end{theorem}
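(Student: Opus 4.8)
The plan is to turn the estimate for the (time-averaged) pressure error into a supremum over test functions through the inf--sup condition of the periodic Stokes pair, reducing everything to quantities already controlled by the velocity estimates of the previous subsection. First I would sum the momentum equation \eqref{eulerscheme1} over $n=1,\dots,\ell$ (using $\vu^0=\vu_0$), subtract the result from the integrated mixed formulation \eqref{eq2.10a} taken at $t=t_\ell$, and isolate the pressure error. Writing $\ve^\ell:=\vu(t_\ell)-\vu^\ell$, $\hP:=k\sum_{n=1}^\ell p^n$, and letting $n(s)$ denote the index with $s\in[t_{n(s)-1},t_{n(s)})$, this produces, $\mP$-a.s.\ for every $\pphi\in\vH^1_{per}(D)$,
\begin{align*}
\bigl(\div\pphi,\,P(t_\ell)-\hP\bigr)
&=\bigl(\ve^\ell,\pphi\bigr)
+\Bigl(\nab\bigl[\nu k\textstyle\sum_{n=1}^\ell\ve^n\bigr],\nab\pphi\Bigr)
+\nu\sum_{n=1}^\ell\int_{t_{n-1}}^{t_n}\bigl(\nab(\vu(s)-\vu(t_n)),\nab\pphi\bigr)\,ds\\
&\quad-\sum_{n=1}^\ell\int_{t_{n-1}}^{t_n}\bigl(\vf(s)-\vf(t_n),\pphi\bigr)\,ds
-\sum_{n=1}^\ell\int_{t_{n-1}}^{t_n}\bigl(\vB(\vu(s))-\vB(\vu^{n-1}),\pphi\bigr)\,dW(s).
\end{align*}
By the inf--sup (LBB) condition there is $\beta_0>0$, independent of $k$, such that $\beta_0\|P(t_\ell)-\hP\|_{L^2}\le\sup_{\pphi\neq 0}(\div\pphi,\,P(t_\ell)-\hP)/\|\pphi\|_{\vH^1_{per}}$ holds pathwise; hence it suffices to bound each of the five terms on the right, divided by $\|\pphi\|_{\vH^1_{per}}$, by a random variable independent of $\pphi$, and then take the $q$th moment.

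The first four terms are deterministic in $\pphi$ and are dispatched by Cauchy--Schwarz together with $\|\pphi\|_{\vL^2},\|\nab\pphi\|_{\vL^2}\le\|\pphi\|_{\vH^1_{per}}$, giving the $\pphi$-free bounds $\|\ve^\ell\|_{\vL^2}$, $\|\nu k\sum_{n=1}^\ell\nab\ve^n\|_{\vL^2}$, $\sum_{n=1}^\ell\int_{t_{n-1}}^{t_n}\|\nab(\vu(s)-\vu(t_n))\|_{\vL^2}\,ds$, and $\sum_{n=1}^\ell\int_{t_{n-1}}^{t_n}\|\vf(s)-\vf(t_n)\|_{\vH^{-1}}\,ds$, up to constants. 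Raising to the $q$th power and taking expectations, the first is $\le Ck^{\gamma q}$ by Corollary \ref{cor3.4}; the second by Theorem \ref{theorem_semi_veolocityH1} (whose proof applies verbatim with terminal index $\ell$ in place of $M$, uniformly in $\ell$); the third by Jensen's inequality in the time integral followed by the H\"older-continuity estimate \eqref{eq2.20a}; and the fourth by the assumed $L^q(\Ome;C^{1/2}(0,T;\vH^{-1}(D)))$-regularity of $\vf$, which yields the faster rate $k^{q/2}\le Ck^{\gamma q}$ since $0<\gamma<\tfrac12$ and $k\le 1$.

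The main obstacle is the last, stochastic term, since the test function $\pphi$ appears simultaneously inside the It\^o integral and inside the supremum of the inf--sup bound, so It\^o's isometry cannot be applied to the ratio as it stands. The key device is the stochastic Fubini theorem: because $\pphi$ is deterministic and $W$ is $\mathbb R$-valued, setting ${\bf G}(s):=\vB(\vu(s))-\vB(\vu^{n(s)-1})$ one has
\begin{align*}
\sum_{n=1}^\ell\int_{t_{n-1}}^{t_n}\bigl(\vB(\vu(s))-\vB(\vu^{n-1}),\pphi\bigr)\,dW(s)
=\Bigl(\int_0^{t_\ell}{\bf G}(s)\,dW(s),\,\pphi\Bigr),
\end{align*}
which moves $\pphi$ outside the stochastic integral. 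This term divided by $\|\pphi\|_{\vH^1_{per}}$ is then bounded, via Cauchy--Schwarz, by the $\pphi$-free quantity $\|\int_0^{t_\ell}{\bf G}(s)\,dW(s)\|_{\vL^2}$, whose $q$th moment is estimated by the It\^o inequality \eqref{ito4}, then by the Lipschitz bound \eqref{eq2.6a} and the splitting $\vu(s)-\vu^{n(s)-1}=(\vu(s)-\vu(t_{n(s)-1}))+\ve^{n(s)-1}$, to obtain
\begin{align*}
\mE\Bigl[\Bigl\|\int_0^{t_\ell}{\bf G}(s)\,dW(s)\Bigr\|_{\vL^2}^q\Bigr]
\le C\int_0^{t_\ell}\mE\bigl[\|\vu(s)-\vu(t_{n(s)-1})\|_{\vL^2}^q+\|\ve^{n(s)-1}\|_{\vL^2}^q\bigr]\,ds
\le Ck^{\gamma q},
\end{align*}
by \eqref{eq2.20a} and \eqref{eq310}. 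Collecting the five bounds, raising the inf--sup inequality to the $q$th power, and dividing by $\beta_0^q$ yields $\mE[\|P(t_\ell)-\hP\|_{L^2}^q]\le C_2 k^{\gamma q}$ with $C_2=C_2(C_1,\beta_0)$, as claimed.
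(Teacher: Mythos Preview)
Your proof is correct and follows essentially the same route as the paper: subtract the summed discrete scheme from the integrated mixed formulation at $t_\ell$, apply the inf--sup condition pathwise, and bound the resulting terms by Corollary~\ref{cor3.4}, Theorem~\ref{theorem_semi_veolocityH1}, the H\"older continuity \eqref{eq2.20a}, the regularity of $\vf$, and \eqref{ito4} for the stochastic piece. The only cosmetic differences are that you split the gradient term $\nab(\vu(s)-\vu^n)=\nab(\vu(s)-\vu(t_n))+\nab\ve^n$ before applying the inf--sup (the paper does it afterwards), and you explicitly name the ``stochastic Fubini'' step $(\int_0^{t_\ell}{\bf G}(s)\,dW(s),\pphi)=\int_0^{t_\ell}({\bf G}(s),\pphi)\,dW(s)$ that the paper uses tacitly when passing from \eqref{eq3.46} to \eqref{eq3.47}.
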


\begin{proof} The proof is based on the well-known inf-sup condition associated with the 
	Stokes problem. First, let us recall the inf-sup condition at the differential level, it
	says that there exists $\beta_0 >0$ such that
	\begin{align}\label{inf-sup}
		\sup_{\pphi \in \vH^1_{per}(D)}\frac{\bigl(w, \div \pphi\bigr)}{\|\nab\pphi\|_{\vL^2}} \geq \beta_0 \|w\|_{L^2} \qquad\forall w\in L^2_{per}(D).
	\end{align}
	
Now, integrating \eqref{eq2.10a} in $t$ from $0$ to $t_{\ell}$ for $1\leq \ell \leq M$, we obtain
\begin{align}\label{eq3.44}
&\bigl({\bf u}(t_{\ell}),  \pphi \bigr) + \nu\int_0^{t_{\ell}}  \bigl(\nab {\bf u}(s), \nab \pphi \bigr) \, ds
- \bigl(  \div \pphi, P(t_{\ell}) \bigr) \\\nonumber
&\quad =({\bf u}_0, \pphi) + \int_0^{t_{\ell}} \big(\vf(s), \pphi\big) \, ds 
+  {\int_0^{t_{\ell}}  \bigl( {\bf B}\bigl({\bf u}(s)\bigr), \pphi \bigr)\, d\vW(s)}  \quad \forall \pphi\in \vH^1_{per}(D),
\end{align}	
and applying $\sum_{n=1}^{\ell}$ to \eqref{eulerscheme1}, we get
\begin{align}\label{eq3.45}
	\bigl(\vu^{\ell}, \pphi\bigr) &+ \nu k\sum_{n=1}^{\ell} \bigl(\nab\vu^n,\nab\pphi\bigr) - k\sum_{n=1}^{\ell}\bigl(p^n,\div\pphi\bigr) \\\nonumber
	&=\bigl(\vu^0,\pphi\bigr) + k\sum_{n=1}^{\ell}\bigl(\vf^{n},\pphi\bigr) + \sum_{n=1}^{\ell}\bigl(\vB(\vu^{n-1})\Delta W_n,\pphi\bigr)\qquad\forall \pphi\in \vH^1_{per}(D).
\end{align}
	
Let $E_P^{m} := P(t_m) - k\sum_{n=1}^mp^n$ and recall that $\ve^m := \vu(t_m) - \vu^m$ from
the proof of Theorem \ref{theorem_semi}. Subtracting \eqref{eq3.44} from \eqref{eq3.45} yields 
	\begin{align}\label{eq3.46}
		\bigl(E_P^{\ell}, \div\pphi\bigr) &= \bigl(\ve^{\ell},\pphi\bigr) + \nu\Bigl(\sum_{n=1}^{\ell}\int_{t_{n-1}}^{t_n}\nab\bigl(\vu(s) - \vu^n\bigr)\, ds, \nab\pphi\Bigr) \\\nonumber
		&\qquad-\Bigl(\sum_{n=1}^{\ell}\int_{t_{n-1}}^{t_n}\bigl(\vf(s)-\vf^n\bigr)\, ds, \pphi\Bigr) \\\nonumber
		&\qquad- \Bigl(\sum_{n=1}^{\ell}\int_{t_{n-1}}^{t_n}\bigl(\vB(\vu(s)) - \vB(\vu^{n-1})\bigr)\, dW(s), \pphi\Bigr).
	\end{align}
Applying Schwarz and Poincare's inequality to the right side of \eqref{eq3.46}, we obtain
	\begin{align}\label{eq3.47}
		\frac{\bigl(E_P^{\ell}, \div\pphi\bigr)}{\|\nab\pphi\|_{\vL^2}} &\leq C\|\ve^{\ell}\|_{\vL^2} + \nu\biggl\|\sum_{n=1}^{\ell}\int_{t_{n-1}}^{t_n}\nab\bigl(\vu(s) - \vu^n\bigr)\, ds\biggr\|_{\vL^2} \\\nonumber
		&\qquad+\biggl\|\sum_{n=1}^{\ell}\int_{t_{n-1}}^{t_n}\bigl(\vf(s)-\vf^n\bigr)\, ds\biggr\|_{\vH^{-1}} \\\nonumber
		&\qquad+ C\biggl\|\sum_{n=1}^{\ell}\int_{t_{n-1}}^{t_n}\bigl(\vB(\vu(s)) - \vB(\vu^{n-1})\bigr)\, dW(s)\biggr\|_{\vL^2}.
	\end{align}
Then, it follows from applying \eqref{inf-sup} to the left-hand side of \eqref{eq3.47} that
    \begin{align}\label{eq3.48}
	\beta_0\|E_P^{\ell}\|_{L^2} &\leq C\|\ve^{\ell}\|_{\vL^2} + \nu\biggl\|\sum_{n=1}^{\ell}\int_{t_{n-1}}^{t_n}\nab\bigl(\vu(s) - \vu^n\bigr)\, ds\biggr\|_{\vL^2} \\\nonumber
	&\qquad+\biggl\|\sum_{n=1}^{\ell}\int_{t_{n-1}}^{t_n}\bigl(\vf(s)-\vf^n\bigr)\, ds\biggr\|_{\vH^{-1}} \\\nonumber
	&\qquad+ C\biggl\|\sum_{n=1}^{\ell}\int_{t_{n-1}}^{t_n}\bigl(\vB(\vu(s)) - \vB(\vu^{n-1})\bigr)\, dW(s)\biggr\|_{\vL^2}.
	\end{align}
	
Next, taking the $q$th power followed by taking expectation on both sides of \eqref{eq3.48} yields 
	\begin{align}\label{eq3.49}
	\beta^q_0\mE\bigl[\|E_P^{\ell}\|^q_{L^2}\bigr] &\leq C_q\mE\bigl[\|\ve^{\ell}\|^q_{\vL^2}\bigr] + C_q\mE\biggl[\biggl\|\sum_{n=1}^{\ell}\int_{t_{n-1}}^{t_n}\nab\bigl(\vu(s) - \vu^n\bigr)\, ds\biggr\|^q_{\vL^2}\biggr] \\\nonumber
	&\qquad+\mE\biggl[\biggl\|\sum_{n=1}^{\ell}\int_{t_{n-1}}^{t_n}\bigl(\vf(s)-\vf^n\bigr)\, ds\biggr\|^q_{\vH^{-1}}\biggr] \\\nonumber
	&\qquad+ C_q\mE\biggl[\biggl\|\sum_{n=1}^{\ell}\int_{t_{n-1}}^{t_n}\bigl(\vB(\vu(s)) - \vB(\vu^{n-1})\bigr)\, dW(s)\biggr\|^q_{\vL^2}\biggr]\\\nonumber
	&=:{\tt a+ b+ c+d}.
	\end{align}
	
	We now estimate four terms on the right-side of \eqref{eq3.49}. Using the estimates 
	of Corollary \ref{cor3.4}, \eqref{eq3.34}, \eqref{eq2.20a} 
	and the assumption $\vf \in L^q(\Ome;C^{\frac12}(0,T;\vH^{-1}(D)))$,  we obtain
	\begin{align*}
		{\tt a+ b+c} &\leq C_q\mE\bigl[\|\ve^{\ell}\|^q_{\vL^2}\bigr] + C_q\mE\biggl[\sum_{n=1}^{\ell}\int_{t_{n-1}}^{t_n}\|\nab\bigl(\vu(s) - \vu(t_n)\bigr)\|^q_{\vL^2}\, ds\biggr] \\\nonumber
		&\qquad +C_q\mE\biggl[\Bigl\|k\sum_{n=1}^{\ell}\nab\ve^{n}\, ds\Bigr\|^q_{\vL^2}\biggr] + C_q\mE\biggl[\sum_{n=1}^{\ell}\int_{t_{n-1}}^{t_n}\bigl\|\vf(s)-\vf^n\bigr\|^q_{\vH^{-1}}\, ds\biggr] \\\nonumber
		&\leq C k^{\gamma q}.
	\end{align*}
	
	Finally, to estimate term {\tt d}, using \eqref{ito4}, Corollary \ref{cor3.4} and \eqref{eq2.20a}, we get 
	\begin{align*}
		{\tt d} &\leq C_q\mE\biggl[\sum_{n=1}^{\ell}\int_{t_{n-1}}^{t_n}\|\vB(\vu(s)) - \vB(\vu^{n-1})\|^q_{\vL^2}\, ds\biggr]\\\nonumber
		&\leq C_q\mE\biggl[\sum_{n=1}^{\ell}\int_{t_{n-1}}^{t_n}\|\vu(s) - \vu(t_{n-1})\|^q_{\vL^2}\, ds\biggr] + C_q\mE\biggl[\sum_{n=1}^{\ell}\int_{t_{n-1}}^{t_n}\|\ve^{n-1}\|^q_{\vL^2}\, ds\biggr]\\\nonumber
		&\leq Ck^{\gamma q}.
	\end{align*}
The desired estimate follows from substituting the above estimates for terms {\tt a,b,c,d} into \eqref{eq3.49} and dividing the inequality by $\beta_0^q$. The proof is complete.
\end{proof}

Next, we state the pathwise error estimate for the pressure approximation. For the best of our knowledge, this is the first pathwise convergence result for  the pressure approximation.

\begin{theorem}
	Assume the assumptions of Theorem \ref{theorem_semi_pressure} hold. Let $2 < q < \infty$ and $0 < \gamma < \frac12$ such that $\gamma - \frac{1}{q} > 0$. Then, for $0 < \gamma_1 < \gamma - \frac{1}{q}$, there exists a random variable $K_1 = K_1(\omega;C_2)$ with $\mE\bigl[|K_1|^{q}\bigr] <\infty$ such that for all $1\leq \ell \leq M$, there holds $\mP$-a.s.
	\begin{align}\label{eq3.52}
	\biggl\| P(t_{\ell}) - k\sum_{n=1}^{\ell} p^n\biggr\|_{L^2} \leq K_1 k^{\gamma_1}.
	\end{align}
\end{theorem}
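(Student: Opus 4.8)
The plan is to obtain \eqref{eq3.52} from the high moment bound of Theorem \ref{theorem_semi_pressure} by exactly the moment-to-pathwise conversion used for the velocity estimate \eqref{eq3.41}, now applied to the time-averaged pressure error $E_P^{\ell}:=P(t_{\ell})-k\sum_{n=1}^{\ell}p^n$. Writing $k=k_M=T/M$ to display the mesh-refinement index $M$, Theorem \ref{theorem_semi_pressure} furnishes, uniformly in $\ell$ and $M$,
\begin{align*}
\mE\bigl[\|E_P^{\ell}\|_{L^2}^q\bigr]\leq C_2\,k_M^{\gamma q}.
\end{align*}
Viewing $\{E_P^{\ell}\}_{\ell}$ as the nodal values of the $L^2_{per}(D)$-valued process $t\mapsto P(t)-\int_0^t\widehat p_k(s)\,ds$, with $\widehat p_k$ the piecewise-constant interpolant of $\{p^n\}$, places us in the setting of Theorem \ref{kolmogorov}, with separable Banach space $E=L^2_{per}(D)$ and exponent $\beta=q$.

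The decisive step is turning the rate $k_M^{\gamma q}$ in the $q$th moment into a pathwise rate. Fix $\gamma_1$ with $\gamma_1<\gamma-\frac1q$. For fixed $\ell$ and every threshold $\lambda>0$, Chebyshev's inequality gives
\begin{align*}
\mP\bigl(\|E_P^{\ell}\|_{L^2}>\lambda\,k_M^{\gamma_1}\bigr)\leq \frac{\mE\bigl[\|E_P^{\ell}\|_{L^2}^q\bigr]}{\lambda^q\,k_M^{\gamma_1 q}}\leq \frac{C_2}{\lambda^q}\,k_M^{(\gamma-\gamma_1)q},
\end{align*}
and the hypotheses $\gamma-\frac1q>0$ and $\gamma_1<\gamma-\frac1q$ give $(\gamma-\gamma_1)q>1$, whence $\sum_{M}k_M^{(\gamma-\gamma_1)q}<\infty$. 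Equivalently, $K_1:=\bigl(\sum_{M}k_M^{-\gamma_1 q}\|E_P^{\ell}\|_{L^2}^q\bigr)^{1/q}$ satisfies $\mE[|K_1|^q]\leq C_2\sum_{M}k_M^{(\gamma-\gamma_1)q}<\infty$ and, termwise, $\|E_P^{\ell}\|_{L^2}\leq K_1\,k_M^{\gamma_1}$ for all $M$, $\mP$-a.s. The loss of exactly $\frac1q$ in the exponent, seen both in the constraint $\gamma_1<\gamma-\frac1q$ and in the requirement $\mE[|K_1|^q]<\infty$, is the single-exponent analogue of the H\"older regularity produced by Theorem \ref{kolmogorov}.

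The main obstacle, and what separates this from the velocity case, is the node index $\ell$. In Corollary \ref{cor3.4} the maximum over $n$ already sits inside the expectation, whereas Theorem \ref{theorem_semi_pressure} controls only each fixed $\ell$ in the time-averaged norm, with no max-in-time moment bound available for the pressure. I would therefore run the summability argument above for each fixed $\ell$ and invoke that a countable union over $(\ell,M)$ of null sets is again null, so that \eqref{eq3.52} holds simultaneously $\mP$-a.s. Making the random constant $K_1$ genuinely independent of $\ell$ while preserving $\mE[|K_1|^q]<\infty$ is the delicate bookkeeping: summing the $\ell$-range back into the series consumes an extra power of $k_M$ and would narrow the admissible window for $\gamma_1$, so securing a single $\ell$-uniform $K_1$ under the stated hypotheses is where the real care is needed.
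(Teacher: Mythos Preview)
Your approach---converting the $q$th-moment bound of Theorem~\ref{theorem_semi_pressure} into a pathwise bound via a Kolmogorov/Borel--Cantelli argument with the expected loss of $\tfrac1q$ in the exponent---is exactly the paper's method, whose entire proof is the single sentence ``follows immediately from an application of Theorem~\ref{kolmogorov} based on the high moment error estimates of Theorem~\ref{theorem_semi_pressure}.'' Your careful flagging of the $\ell$-uniformity issue (Theorem~\ref{theorem_semi_pressure} controls each fixed $\ell$ but has no $\max_\ell$ inside the expectation, unlike Corollary~\ref{cor3.4} for the velocity) actually goes beyond what the paper's one-liner engages with.
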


\begin{proof}
		The assertion follows immediately from an application of Theorem \ref{kolmogorov} based on the high moment error estimates of Theorem \ref{theorem_semi_pressure}.
\end{proof}

 
\section{Fully discrete mixed finite element discretization}\label{section_fullydiscrete} In this section, we formulate and 
analyze the spatial approximations of Algorithm 1 by using the mixed finite element method.

\subsection{Formulation of the mixed finite element method}
 Let $\mathcal{T}_h$ be a quasi-uniform mesh of the domain $D \subset \mathbb{R}^2$ with mesh size $h > 0$. We introduce the following finite element spaces:
\begin{align*}
\mH_h &= \bigl\{\vv_h \in {\bf C}(\overline{D}) \cap \vH^1_{per}(D);\,\vv_h \in [\mathcal{P}_i(K)]^2\qquad\forall K \in \mathcal{T}_h\bigr\},\\
L_h &= \bigl\{\psi_h \in C(\overline{D})\cap \in L^2_{per};\, \psi_h \in \mathcal{P}_j(K)\qquad\forall K \in \mathcal{T}_h\bigr\},
\end{align*}  
where $\mathcal{P}_i(K)$ denotes the space of all polynomials on $K$ of degree at most $i$. It is well-known that the mixed finite element space pair $\mH_h$ and $L_h$ must satisfies the Ladyzhenskaja-Babuska-Brezzi (LBB)  (or inf-sup condition) which is now quoted:  there exists $\beta_1>0$ such that
\begin{align}\label{inf-sup_discrete}
\sup_{\pphi_h \in \mH_h} \frac{\bigl(\div \pphi_h,\psi_h\bigr)}{\|\nab\pphi_h\|_{\vL^2}} \geq \beta_1\|\psi_h\|_{L^2}\qquad\forall \psi_h\in L_h,
\end{align}
where the constant $\beta_1$ is independent of $h$ (and $k$). 

\smallskip
\noindent
\textbf{Algorithm 2} 

Let $\vu_h^0$ be a given $\mH_h$-valued random variable. Find $\displaystyle \bigl(\vu_h^{n+1},p_h^{n+1}\bigr) \in \mH_h\times L_h$ such that $\mP$-a.s.
\begin{align}
\label{eq4.5}	\bigl(\vu_h^{n+1} &- \vu_h^n,\pphi_h\bigr) + \nu k \bigl(\nab \vu_h^{n+1},\nab\pphi_h\bigr) - k\bigl(p_h^{n+1},\div \pphi_h\bigr) \\\nonumber
&\qquad\qquad\qquad= k\bigl(\vf^{n+1},\pphi_h\bigr) + \bigl(\vB(\vu^n_h)\Delta W_{n+1},\pphi_h\bigr),\\
&\bigl(\div\vu_h^n,\psi_h\bigr) =0,
\end{align}
for all $\pphi_h \in \mH_h$ and $\psi_h\in L_h$.

\smallskip

Below we only consider the Taylor-Hood mixed finite element pair $\mH_h\times L_h$ (cf. \cite{Brezzi})  which takes $i=2$ and $j=1$ and is known to satisfy \eqref{inf-sup_discrete}. For the other LBB-stable mixed finite element spaces, the error analysis  is similar.

Next, we define $\mV_h\subset \mH_h$ as the following space of discretely divergent-free vector fields:
\begin{align*}
\mV_h = \Bigl\{\pphi_h \in \mH_h;\, \bigl(\div \pphi_h, q_h\bigr) =0\qquad\forall q_h \in L_h\Bigr\}.
\end{align*}

We notice that in general, $\mV_h$ is not a subspace of $\mV$.

Denote $\vQ_h: \vL^2_{per} \rightarrow \mV_h$ as the $L^2$-orthogonal projection, which satisfies 
\begin{align}
\bigl(\vv - \vQ_h\vv, \pphi_h\bigr) = 0\qquad\forall \pphi_h\in \mV_h.
\end{align}

In addition, we recall the following well-known interpolation estimates for the Taylor-Hood element:
\begin{align}
\|\vv - \vQ_h\vv\|_{\vL^2} + h\|\nab(\vv - \vQ_h\vv)\|_{\vL^2} &\leq C h^2\|\vA\vv\|_{\vL^2}\qquad\forall \vv\in \mV\cap\vH^2(D),\\
\|\vv - \vQ_h\vv\|_{\vL^2} &\leq Ch\|\nab\vv\|_{\vL^2}\qquad\forall \vv\in \mV\cap\vH^1(D).
\end{align} 

We also let $P_h: L^2_{per} \rightarrow L_h$ denote the $L^2$-orthogonal projection defined by   
\begin{align}
\bigl(\psi - P_h\psi, q_h\bigr) = 0\qquad\forall q_h \in L_h.
\end{align}
It is well-known that there holds 
\begin{align}
\|\psi - P_h\psi\|_{L^2} \leq Ch\|\nab \psi\|_{\vL^2}\qquad\forall \psi \in L^2_{per}(D)\cap H^1(D).
\end{align}

For the sake of notation brevity, in the rest of this section, we set $\vf = 0$.

We conclude this subsection by stating the following stability estimates for $\{\vu_h^n\}$ which were proved in \cite[Lemma 3.1]{BCP12}. 

\begin{lemma}\label{stability_FEM}
	Let $1 \leq q < \infty$ and $\vu_h^0 \in L^{2^q}(\Ome;\mH_h)$ satisfying  $ \mE\bigl[\|\vu_h^0\|^{2^q}_{\vL^2}\bigr] \leq C$. Then, there exists a pair $\bigl\{\vu_h^{n},p^{n}_h\bigr\}_{n=1}^M \subset L^{2^q}(\Ome; \mH_h\times L_h)$ that solves Algorithm 2 and satisfies
	\begin{enumerate}[(i)]
		\item\qquad $\displaystyle \mE\biggl[\max_{1\leq n \leq M}\|\vu_h^n\|^{2^q}_{\vL^2} + \nu k\sum_{n=1}^M \|\vu_h^n\|^{2^{q-1}}_{\vL^2}\|\nab\vu_h^n\|^2_{\vL^2}\biggr] \leq C_{T,q},$
		\item\qquad $\displaystyle \mE\biggl[\biggl(k\sum_{n=1}^M \|\nab\vu_h^n\|^2_{\vL^2}\biggr)^{2^{q-1}}\,\biggr] \leq C_{T,q},$
	\end{enumerate}
	where $C_{T,q} = C_{T,q}(D_T,q,\vu_h^0)$.
\end{lemma}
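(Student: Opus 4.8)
The plan is to transplant the bootstrap machinery already built for the semi-discrete velocity error in Theorem~\ref{theorem_semi} to the discrete solution $\vu_h^n$ itself, with the linear growth bound \eqref{eq2.6b} playing the role previously played by the H\"older-continuity estimate. Before the energy argument I would settle well-posedness: for a fixed, $\cF_{t_n}$-measurable $\vu_h^n$, the system \eqref{eq4.5} is linear in the unknown pair $(\vu_h^{n+1},p_h^{n+1})$, and since $\mH_h\times L_h$ satisfies the LBB condition \eqref{inf-sup_discrete}, the discrete Babu\v{s}ka--Brezzi theory furnishes a unique solution at each step, which is $\cF_{t_{n+1}}$-measurable because it depends measurably on the data $\vu_h^n$ and $\Delta W_{n+1}$. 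Hence the recursion is well defined and $\{\vu_h^n\}$ is adapted.

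For the base case (second moment, $q=1$), I would test \eqref{eq4.5} with $\pphi_h=\vu_h^{n+1}\in\mV_h$; since $p_h^{n+1}\in L_h$ the pressure term $k(p_h^{n+1},\div\vu_h^{n+1})$ vanishes by the definition of $\mV_h$, and the identity $2(a-b,a)=\|a\|^2-\|b\|^2+\|a-b\|^2$ yields
\begin{align*}
\tfrac12\bigl(\|\vu_h^{n+1}\|^2_{\vL^2}-\|\vu_h^n\|^2_{\vL^2}\bigr)&+\tfrac12\|\vu_h^{n+1}-\vu_h^n\|^2_{\vL^2}+\nu k\|\nab\vu_h^{n+1}\|^2_{\vL^2}\\
&=\bigl(\vB(\vu_h^n)\Delta W_{n+1},\vu_h^{n+1}\bigr).
\end{align*}
Writing $\vu_h^{n+1}=\vu_h^n+(\vu_h^{n+1}-\vu_h^n)$ on the right, the increment part is dominated by $\tfrac14\|\vu_h^{n+1}-\vu_h^n\|^2_{\vL^2}+\|\vB(\vu_h^n)\Delta W_{n+1}\|^2_{\vL^2}$, the first piece being absorbed on the left, whereas $(\vB(\vu_h^n)\Delta W_{n+1},\vu_h^n)$ is a martingale increment with vanishing conditional expectation. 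Taking expectations, using $\mE[|\Delta W_{n+1}|^2\mid\cF_{t_n}]=k$ with \eqref{eq2.6b}, summing in $n$, and applying the discrete Gronwall inequality yields the second-moment bound; the maximum inside the expectation in (i) is then recovered by applying the Burkholder--Davis--Gundy inequality to the discrete martingale $\sum_j\bigl(\vB(\vu_h^{j-1})\Delta W_j,\vu_h^{j-1}\bigr)$.

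For the high moment $2^q$ with $q\ge2$ I would argue by induction precisely along the lines of \eqref{eq332}--\eqref{eq344}: from the base energy identity I would derive an a.s.\ per-step inequality for $\tfrac{1}{2^q}\bigl[\|\vu_h^{n+1}\|^{2^q}_{\vL^2}-\|\vu_h^n\|^{2^q}_{\vL^2}\bigr]$ whose right side collects a $k\|\vu_h^n\|^{2^q}_{\vL^2}$ term, a sum of stochastic-integral terms $\|\vB(\vu_h^n)\Delta W_{n+1}\|^{2^j}_{\vL^2}\|\vu_h^n\|^{2^q-2^j}_{\vL^2}$, and one genuine martingale term $(\vB(\vu_h^n)\Delta W_{n+1},\vu_h^n)\|\vu_h^n\|^{2^q-2}_{\vL^2}$. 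The step from $2^q$ to $2^{q+1}$ is executed by multiplying this inequality by $\|\vu_h^{n+1}\|^{2^q}_{\vL^2}$ and reusing the identity $2a(a-b)=a^2-b^2+(a-b)^2$; the resulting squared difference $(\|\vu_h^{n+1}\|^{2^q}_{\vL^2}-\|\vu_h^n\|^{2^q}_{\vL^2})^2$ on the left absorbs the Young's-inequality remainders. Taking expectations, the martingale terms vanish after I split $\|\vu_h^{n+1}\|^{2^q}_{\vL^2}=\|\vu_h^n\|^{2^q}_{\vL^2}+(\|\vu_h^{n+1}\|^{2^q}_{\vL^2}-\|\vu_h^n\|^{2^q}_{\vL^2})$ so that the genuinely $\cF_{t_n}$-measurable weight survives; the stochastic-integral terms are controlled by \eqref{ito4}, \eqref{ito2}, \eqref{mean_wiener} and \eqref{eq2.6b}, and discrete Gronwall closes the recursion. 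The maximum in (i) is again handled by Burkholder--Davis--Gundy as in the last step of the proof of Theorem~\ref{theorem_semi}.

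For part (ii) I would start from the summed base energy identity, which gives pathwise $\nu k\sum_{n=1}^{\ell}\|\nab\vu_h^n\|^2_{\vL^2}\le C\|\vu_h^0\|^2_{\vL^2}+C\sum_{n=1}^{\ell}\|\vB(\vu_h^{n-1})\Delta W_n\|^2_{\vL^2}+C\bigl|\sum_{n=1}^{\ell}(\vB(\vu_h^{n-1})\Delta W_n,\vu_h^{n-1})\bigr|$ after the same splitting and absorption; raising to the power $2^{q-1}$, taking expectation, and estimating the two noise contributions by the discrete H\"older inequality together with \eqref{mean_wiener}, \eqref{eq2.6b} and the Burkholder--Davis--Gundy inequality (exactly as in the proof of Lemma~\ref{stability_pressure}) reduces everything to the bound (i). The main obstacle throughout is the high-moment noise term: once the energy identity is multiplied by a large power $\|\vu_h^{n+1}\|^{2^q}_{\vL^2}$, the It\^o increment is weighted by an $\cF_{t_{n+1}}$-measurable factor and is no longer mean-zero, so the split-and-absorb device producing the extra $(\|\vu_h^{n+1}\|^{2^q}_{\vL^2}-\|\vu_h^n\|^{2^q}_{\vL^2})^2$ dissipation on the left, and the concluding Burkholder--Davis--Gundy step for the maximum, are where the real care is required.
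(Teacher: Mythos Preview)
The paper does not give its own proof of this lemma; it simply cites \cite[Lemma~3.1]{BCP12}. Your sketch is a faithful reconstruction of exactly the argument in that reference: test \eqref{eq4.5} with $\vu_h^{n+1}\in\mV_h$ so the pressure drops, use $2(a-b,a)=\|a\|^2-\|b\|^2+\|a-b\|^2$, split the noise into a martingale piece and an increment piece absorbed on the left, and then bootstrap to $2^q$ by multiplying the per-step inequality by $\|\vu_h^{n+1}\|^{2^q}_{\vL^2}$ and iterating, with Burkholder--Davis--Gundy at the end to pull the maximum inside the expectation. This is also the same machinery the paper itself deploys in the proof of Theorem~\ref{theorem_semi} for the error, so your plan is both correct and in line with the intended argument.

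One small remark on part~(ii): in \cite{BCP12} the proof of the $2^{q-1}$-th moment of $k\sum_n\|\nab\vu_h^n\|^2_{\vL^2}$ is not done by raising the summed energy identity to a power as you propose, but rather by a separate induction that multiplies the per-step identity by $\bigl(k\sum_{j\le n}\|\nab\vu_h^j\|^2_{\vL^2}\bigr)^{2^{q-1}-1}$, which avoids having to estimate the $2^{q-1}$-th power of the absolute value of the discrete martingale directly. Your route also works once you invoke Burkholder--Davis--Gundy on that martingale and feed in part~(i), so the difference is cosmetic; their version just keeps the bookkeeping closer to the $q=1$ case.
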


\subsection{High moment and pathwise error estimates for the fully discrete velocity approximation}
The goal of this subsection is to establish high  moment and pathwise error estimates for the fully discrete velocity approximation generated by Algorithm 2.

\begin{theorem}\label{theorem_fully} 
Let $2\leq q < \infty$ and $\vu_h^0 = \vQ_h \vu_0$.  Assume that $\vu_0 \in L^{q}(\Ome;\mV)$. Let $\{(\vu^n, p^n)\}$ and $\{\vu^n_h,p^n_h\}$ be the velocity and pressure approximations generated by Algorithm 1 and Algorithm 2, respectively. Then there holds
\begin{align}
\mE\Bigl[\max_{1\leq n \leq M}\|\vu^n - \vu_h^n\|^{q}_{\vL^2}\Bigr] &+ \mE\biggl[\Bigl(\nu k \sum_{n=1}^M\|\nab(\vu^n - \vu_h^n)\|^2_{\vL^2}\Bigr)^{q/2}\biggr]\\\nonumber
 &\leq C_3\biggl(h^{q} + h^{q}\mE\Bigl[\Bigl(k\sum_{n=1}^M\|\nab p^n\|^2_{\vL^2}\Bigr)^{q/2}\Bigr]\,\biggr),
\end{align}
where $C_3 = C_3(T,q,\vu_0,\vf)>0$ is independent of $k$ and $h$.
\end{theorem}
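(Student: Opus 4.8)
The plan is to run a discrete energy argument that compares the two \emph{discrete} solutions $\{\vu^n\}$ and $\{\vu_h^n\}$ directly, using the $L^2$-projection $\vQ_h$ onto $\mV_h$ to split the spatial error. Write $\ve_h^n := \vu^n - \vu_h^n = \rho^n + \theta^n$ with $\rho^n := \vu^n - \vQ_h\vu^n$ and $\theta^n := \vQ_h\vu^n - \vu_h^n \in \mV_h$; since $\vu_h^0 = \vQ_h\vu_0$ we have $\theta^0 = 0$. Restricting the semi-discrete scheme \eqref{eulerscheme1} to test functions in $\mH_h$ and subtracting \eqref{eq4.5}, then testing the difference with $\pphi_h = \theta^{n+1}\in\mV_h$, three simplifications occur: the projection orthogonality gives $(\rho^{n+1}-\rho^n,\theta^{n+1})=0$; because $\theta^{n+1}\in\mV_h$ we have $(p_h^{n+1},\div\theta^{n+1})=0$ and $(p^{n+1},\div\theta^{n+1}) = (p^{n+1}-P_h p^{n+1},\div\theta^{n+1})$, so the pressure contribution becomes a pure projection error; and the identity $2(a-b,a)=\|a\|^2-\|b\|^2+\|a-b\|^2$ produces the telescoping $L^2$ term together with a $\|\theta^{n+1}-\theta^n\|^2_{\vL^2}$ term.

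First I would bound the deterministic contributions. Cauchy--Schwarz and Young's inequality absorb three factors $\tfrac{\nu k}{4}\|\nab\theta^{n+1}\|^2_{\vL^2}$, while the interpolation estimate $\|\nab\rho^{n+1}\|_{\vL^2}\le Ch\|\vA\vu^{n+1}\|_{\vL^2}$ and the pressure bound $\|p^{n+1}-P_hp^{n+1}\|_{L^2}\le Ch\|\nab p^{n+1}\|_{\vL^2}$ turn these into the source terms $Ckh^2\|\vA\vu^{n+1}\|^2_{\vL^2}$ and $Ckh^2\|\nab p^{n+1}\|^2_{\vL^2}$. For the noise I would split $\theta^{n+1}=\theta^n+(\theta^{n+1}-\theta^n)$: the increment part is handled by Young (its $\|\theta^{n+1}-\theta^n\|^2_{\vL^2}$ piece being absorbed on the left), and the $\theta^n$ part is a martingale increment. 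Using the Lipschitz bound $\|\vB(\vu^n)-\vB(\vu_h^n)\|_{\vL^2}\le C\|\ve_h^n\|_{\vL^2}$, this yields the per-step inequality
\begin{align*}
\tfrac12\bigl(\|\theta^{n+1}\|^2_{\vL^2}-\|\theta^n\|^2_{\vL^2}\bigr) &+ \tfrac14\|\theta^{n+1}-\theta^n\|^2_{\vL^2} + \tfrac{\nu k}{2}\|\nab\theta^{n+1}\|^2_{\vL^2} \\
&\le Ck\|\ve_h^n\|^2_{\vL^2} + Ckh^2\bigl(\|\vA\vu^{n+1}\|^2_{\vL^2}+\|\nab p^{n+1}\|^2_{\vL^2}\bigr) \\
&\quad + \bigl\|\bigl(\vB(\vu^n)-\vB(\vu_h^n)\bigr)\Delta W_{n+1}\bigr\|^2_{\vL^2} + \Bigl(\bigl(\vB(\vu^n)-\vB(\vu_h^n)\bigr)\Delta W_{n+1},\,\theta^n\Bigr).
\end{align*}

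Next I would sum over $n$ (using $\theta^0=0$), raise the result to the power $q/2$, and take the maximum over $\ell$ and expectation. The deterministic sources are controlled by the stability estimates of Lemma \ref{stability_mean}, namely $\mE[(k\sum_n\|\vA\vu^n\|^2_{\vL^2})^{q/2}]\le C$ and $\mE[\max_n\|\nab\vu^n\|^q_{\vL^2}]\le C$, so together with the interpolation bound $\|\rho^n\|_{\vL^2}\le Ch\|\nab\vu^n\|_{\vL^2}$ they contribute precisely the $Ch^q$ and $Ch^q\,\mE[(k\sum_n\|\nab p^n\|^2_{\vL^2})^{q/2}]$ terms of the asserted estimate, while the coercive term $\mE[(\nu k\sum_n\|\nab\theta^n\|^2_{\vL^2})^{q/2}]$ is kept on the left throughout.

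The hard part will be the high-moment treatment of the two stochastic contributions, which is where $q>2$ genuinely differs from the second-moment case. For the martingale sum $M_\ell := \sum_{n=1}^\ell(\bigl(\vB(\vu^{n-1})-\vB(\vu_h^{n-1})\bigr)\Delta W_n,\theta^{n-1})$ I would invoke the discrete Burkholder--Davis--Gundy inequality, bound its predictable quadratic variation by $Ck\sum_n\|\ve_h^{n-1}\|^2_{\vL^2}\|\theta^{n-1}\|^2_{\vL^2}$, factor out $\max_n\|\theta^{n-1}\|_{\vL^2}$, and use Young's inequality to split off a small multiple $\epsilon\,\mE[\max_\ell\|\theta^\ell\|^q_{\vL^2}]$ (absorbed on the left) plus a Gronwall-type term $C\,\mE[(k\sum_n\|\ve_h^{n-1}\|^2_{\vL^2})^{q/2}]$. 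For the term $\sum_n\|(\vB(\vu^{n-1})-\vB(\vu_h^{n-1}))\Delta W_n\|^2_{\vL^2}$ I would use the discrete H\"older inequality, the independence of the increments, and the Wiener moment bound \eqref{mean_wiener} ($\mE[|\Delta W_n|^q]\le Ck^{q/2}$), exactly as in the proof of Lemma \ref{stability_pressure}, reducing it to $Ck\sum_n\mE[\|\ve_h^{n-1}\|^q_{\vL^2}]$. Splitting $\|\ve_h^{n-1}\|_{\vL^2}\le\|\rho^{n-1}\|_{\vL^2}+\|\theta^{n-1}\|_{\vL^2}$ routes the $\rho$-part into the $O(h^q)$ contribution and leaves a term bounded by $Ck\sum_n\mE[\max_{m\le n}\|\theta^m\|^q_{\vL^2}]$. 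A final application of the discrete Gronwall inequality closes the estimate for $\theta$, and the triangle inequality together with the projection bounds converts it into the claimed bound for $\ve_h^n = \vu^n-\vu_h^n$.
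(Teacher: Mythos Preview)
Your proposal is correct and follows essentially the same route as the paper: the paper also tests the error equation with $\vQ_h\vE^{n+1}$ (which is exactly your $\theta^{n+1}$, since $\vu_h^{n+1}\in\mV_h$), splits the noise into the martingale part $(\cdot,\vQ_h\vE^n)$ and the increment part $(\cdot,\vQ_h(\vE^{n+1}-\vE^n))$, sums, raises to the $q/2$-power, and then handles the squared-noise sum via discrete H\"older plus \eqref{mean_wiener}. The only cosmetic difference is that for the martingale term the paper again uses discrete H\"older and Wiener moments (as in \eqref{eq3.7}--\eqref{eq38}) rather than your ``factor out $\max_n\|\theta^n\|$ and apply Young'' step, but both arguments lead to the same Gronwall structure.
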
	

\begin{proof}
	Let $\vE^n = \vu^n - \vu^n_h$ for $0 \leq n \leq M-1$. Subtracting \eqref{eulerscheme1} from  \eqref{eq4.5} we obtain the following error equation: 
	\begin{align}\label{eq410}
		\bigl(\vE^{n+1} - \vE^n,\pphi_h\bigr) &+ \nu k\bigl(\nab\vE^{n+1},\nab\pphi_h\bigr) - k \bigl(p^{n+1}-p^{n+1}_h,\div \pphi_h\bigr) \\\nonumber
		&= \bigl(\bigl(\vB(\vu^n) - \vB(\vu^n_h)\bigr)\Delta W_{n+1},\pphi_h\bigr)\qquad \forall \pphi_h \in \mH_h.
	\end{align}
 
	Since $\pphi_h = \vQ_h \vE^{n+1} = \vE^{n+1} - (\vu^{n+1} - \vQ_h\vu^{n+1}) \in \mV_h$, then $\bigl(p_h^{n+1}, \div \vQ_h\vE^{n+1}\bigr) = 0$. Thus, \eqref{eq410} becomes
	\begin{align}\label{eq411}
	\bigl(\vE^{n+1} &- \vE^n,\vQ_h\vE^{n+1}\bigr) + \nu k\|\nab\vE^{n+1}\|^2 \\\nonumber
	 &= \nu k \bigl(\nab\vE^{n+1}, \nab(\vu^{n+1} - \vQ_h\vu^{n+1})\bigr)-k \bigl(p^{n+1},\div \vQ_h\vE^{n+1}\bigr) \\\nonumber
	&\qquad+ \bigl(\bigl(\vB(\vu^n) - \vB(\vu^n_h)\bigr)\Delta W_{n+1},\vQ_h\vE^{n+1}\bigr).
	\end{align}
	
	Using the orthogonality of the $\vL^2$-projection and the binomial $2(a,a-b) = \|a\|^2 - \|b\|^2 + \|a-b\|^2$, the left side of \eqref{eq411} can written as follows:
	\begin{align}\label{eq412}
		{\tt LHS} = &\frac12\bigl[\|\vQ_h\vE^{n+1}\|^2_{\vL^2} - \|\vQ_h\vE^n\|^2_{\vL^2}\bigr] \\\nonumber
		&\qquad+ \frac12\|\vQ_h(\vE^{n+1} - \vE^n)\|^2_{\vL^2} + \nu k \|\nab\vE^{n+1}\|^2_{\vL^2} 
		=: {\tt RHS}, 
	\end{align}
and 
	\begin{align}\label{eq413}
		{\tt RHS} &= \nu k \bigl(\nab\vE^{n+1}, \nab(\vu^{n+1} - \vQ_h\vu^{n+1})\bigr) + k \bigl(p^{n+1},\div \vQ_h\vE^{n+1}\bigr) \\\nonumber
		&\qquad+ \bigl(\bigl(\vB(\vu^n) - \vB(\vu^n_h)\bigr)\Delta W_{n+1},\vQ_h\vE^{n+1}\bigr)\\\nonumber
		&=\nu k \bigl(\nab\vE^{n+1}, \nab(\vu^{n+1} - \vQ_h\vu^{n+1})\bigr)+k \bigl(p^{n+1},\div \vQ_h\vE^{n+1}\bigr) \\\nonumber
		&\qquad+ \bigl(\bigl(\vB(\vu^n) - \vB(\vu^n_h)\bigr)\Delta W_{n+1},\vQ_h\vE^{n+1} - \vQ_h\vE^n\bigr) \\\nonumber
		&\qquad+  \bigl(\bigl(\vB(\vu^n) - \vB(\vu^n_h)\bigr)\Delta W_{n+1},\vQ_h\vE^n\bigr)\\\nonumber
		&= {\tt I + II + III + IV}.
	\end{align} 
	
Using Cauchy-Schwarz's inequality and Young's inequality, we obtain
	\begin{align}\label{eq414}
		{\tt I} &\leq \frac{\nu k}{4}\|\nab\vE^{n+1}\|^2_{\vL^2} + \nu k \|\nab(\vu^{n+1} - \vQ_h\vu^{n+1})\|^2_{\vL^2},\\\nonumber
		&\leq \frac{\nu k}{4}\|\nab\vE^{n+1}\|^2_{\vL^2} + Ck h^2\|\vA \vu^{n+1}\|_{\vL^2}
	\end{align}
	where the first term on the right side of \eqref{eq414} will be absorbed to the left side of \eqref{eq411} later. In addition,
	\begin{align}\label{eq415}
		{\tt III} \leq \|(\vB(\vu^n) - \vB(\vu^n_h))\Delta W_{n+1}\|^2_{\vL^2} + \frac14\|\vQ_h(\vE^{n+1} - \vE^n)\|^2_{\vL^2}.
	\end{align}
	Moreover, using the fact that $\bigl(P_h p^{n+1},\div\vQ_h\vE^{n+1}\bigr) = 0$, we have
	\begin{align}\label{eq416}
	{\tt II} &= k\bigl(p^{n+1},\div \vQ_h\vE^{n+1}\bigr)\\\nonumber
	& = k\bigl(p^{n+1} - P_hp^{n+1},\div \vQ_h\vE^{n+1}\bigr)\\\nonumber
	&\leq \frac{\nu k}{4}\|\nab\vE^{n+1}\|^2_{\vL^2} + Ck\|p^{n+1} - P_h p^{n+1}\|^2_{L^2}\\\nonumber
	&\leq \frac{\nu k}{4}\|\nab\vE^{n+1}\|^2_{\vL^2} + Ckh^2\|\nab p^{n+1}\|^2_{\vL^2}.
	\end{align}

Substituting \eqref{eq412}--\eqref{eq416} into \eqref{eq411} yields 
	\begin{align}\label{eq417}
		&\frac12\bigl[\|\vQ_h\vE^{n+1}\|^2_{\vL^2} - \|\vQ_h\vE^n\|^2_{\vL^2}\bigr] + \frac14\|\vQ_h(\vE^{n+1} - \vE^n)\|^2_{\vL^2} + \frac{\nu k}{2} \|\nab\vE^{n+1}\|^2_{\vL^2}\\\nonumber
		&\leq Ckh^2\|\vA\vu^{n+1}\|^2_{\vL^2} + Ckh^2\|\nab p^{n+1}\|^2_{\vL^2} + \|(\vB(\vu^n) - \vB(\vu^n_h))\Delta W_{n+1}\|^2_{\vL^2} \\\nonumber
		&\qquad+ \bigl(\bigl(\vB(\vu^n) - \vB(\vu^n_h)\bigr)\Delta W_{n+1},\vQ_h\vE^n\bigr).
	\end{align}
Lowering one index of \eqref{eq417} and applying the summation operator $\sum_{n=1}^{\ell}$ for $1 \leq \ell \leq M$, we get
	\begin{align}\label{eq418}
		\|\vQ_h\vE^{\ell}\|^2_{\vL^2} &+ \nu k\sum_{n=1}^{\ell}\|\nab\vE^{n}\|^2_{\vL^2} \leq Ch^2 k\sum_{n=1}^{\ell}\|\vA\vu^{n}\|^2_{\vL^2} + Ch^2 k\sum_{n=1}^{\ell}\|\nab p^n\|^2_{\vL^2}\\\nonumber
		&\hskip 1.0in +2\sum_{n=1}^{\ell} \|(\vB(\vu^{n-1}) - \vB(\vu^{n-1}_h))\Delta W_{n}\|^2_{\vL^2}\\\nonumber
		&\hskip 1.0in + 2\biggl|\sum_{n=1}^{\ell}\bigl(\bigl(\vB(\vu^{n-1}) - \vB(\vu^{n-1}_h)\bigr)\Delta W_{n},\vQ_h\vE^{n-1}\bigr)\biggr|.
	\end{align}
	
	Next, taking maximum over all $1\leq \ell \leq M$ and followed by taking the $\frac{q}{2}$-power for any $2 \leq q < \infty$ and the expectation to \eqref{eq418}, we obtain
	\begin{align}\label{eq419}
	&\mE\bigl[\max_{1\leq \ell\leq M}\|\vQ_h\vE^{\ell}\|^{q}_{\vL^2}\bigr] + \mE\biggl[\Bigl(\nu k \sum_{n=1}^M\|\nab\vE^n\|^2_{\vL^2}\Bigr)^{q/2}\biggr]\\\nonumber
	 &\leq C_qh^{q} \mE\biggl[\Bigl(k\sum_{n=1}^{M}\|\vA\vu^{n}\|^2_{\vL^2}\Bigr)^{q/2}\biggr] + C_qh^{q} \mE\biggl[\Bigl(k\sum_{n=1}^{M}\|\nab p^n\|^2_{\vL^2}\Bigr)^{q/2}\biggr]\\\nonumber
	&\qquad+C_q\mE\biggl[\Bigl(\sum_{n=1}^{M} \|(\vB(\vu^{n-1}) - \vB(\vu^{n-1}_h))\Delta W_{n}\|^2_{\vL^2}\Bigr)^{q/2}\biggr]\\\nonumber
	&\qquad+ C_q\mE\biggl[\max_{1\leq \ell\leq M}\biggl|\sum_{n=1}^{\ell}\bigl(\bigl(\vB(\vu^{n-1}) - \vB(\vu^{n-1}_h)\bigr)\Delta W_{n},\vQ_h\vE^{n-1}\bigr)\biggr|^{q/2}\biggr].
	\end{align}
	
	We can use stability estimate (ii) in Lemma \ref{stability_mean} to control the first term on the right-hand side of \eqref{eq419} and Lemma \ref{stability_pressure} to bound the second term. Hence,  it remains to bound the last two terms on the right side of \eqref{eq419}. 
	Proceeding similarly as in \eqref{eq3.7} and \eqref{eq38}, we obtain
	\begin{align}\label{eq4.20}
		&\mE\biggl[\Bigl(\sum_{n=1}^{M} \|(\vB(\vu^{n-1}) - \vB(\vu^{n-1}_h))\Delta W_{n}\|^2_{\vL^2}\Bigr)^{q/2}\biggr]\\\nonumber
		 &\qquad = \mE\biggl[\Bigl(\sum_{n=1}^{M} \|\vB(\vu^{n-1}) - \vB(\vu^{n-1}_h)\|^2_{\vL^2}|\Delta W_{n}|^2\Bigr)^{q/2}\biggr]\\\nonumber
		 &\qquad \leq C_q\mE\biggl[\Bigl(\sum_{n=1}^{M} \|\vE^{n-1}\|^2_{\vL^2}|\Delta W_{n}|^2\Bigr)^{q/2}\biggr]\\\nonumber		
		 &\qquad \leq C_q M^{q/2-1}\mE\biggl[\sum_{n=1}^M \|\vE^{n-1}\|^{q}_{\vL^2}|\Delta W_{n+1}|^{q}\biggr]\\\nonumber
		 &\qquad\leq C_q M^{q/2-1} k^{q/2} \sum_{n=1}^M\mE\bigl[\|\vE^{n-1}\|^{q}_{\vL^2}\bigr]\\\nonumber
		 &\qquad\leq C_q k\sum_{n=1}^M \mE\bigl[\|\vQ_h\vE^{n-1}\|^{q}_{\vL^2}\bigr] + C_q k\sum_{n=1}^M\mE\bigl[\|\vu^{n-1} - \vQ_h\vu^{n-1}\|^{q}_{\vL^2}\bigr]\\\nonumber
		 &\qquad\leq C_q k\sum_{n=1}^M \mE\bigl[\|\vQ_h\vE^{n-1}\|^{q}_{\vL^2}\bigr] + C_q h^{q}k\sum_{n=1}^M\mE\bigl[\|\nab\vu^{n-1}\|^{q}_{\vL^2}\bigr].
	\end{align}
	
	To bound the second term on the right-hand side of \eqref{eq419}, we use the Burkholder-Davis-Gundy  and H\"older inequalities to obtain
	\begin{align}\label{eq4.21}
		&\mE\biggl[\max_{1\leq \ell\leq M}\biggl|\sum_{n=1}^{\ell}\bigl(\bigl(\vB(\vu^{n-1}) - \vB(\vu^{n-1}_h)\bigr)\Delta W_{n},\vQ_h\vE^{n-1}\bigr)\biggr|^{q/2}\biggr]\\\nonumber
		&\leq C_q\mE\biggl[\Bigl(\sum_{n=1}^M\|\vB(\vu^{n-1}) - \vB(\vu^{n-1}_h)\|^2_{\vL^2}|\Delta W_n|^2 \|\vQ_h\vE^{n-1}\|^2_{\vL^2}\Bigr)^{q/4}\biggr]\\\nonumber
		&\leq C_q \mE\biggl[\Bigl(\sum_{n=1}^M\|\vE^{n-1}\|^2_{\vL^2}\|\vQ_h\vE^{n-1}\|^2_{\vL^2}|\Delta W_n|^2\Bigr)^{q/4}\biggr]\\\nonumber
		&\leq C_q M^{q/4-1} k^{q/4}\sum_{n=1}^M \mE\bigl[\|\vE^{n-1}\|^{q/2}_{\vL^2}\|\vQ_h\vE^{n-1}\|^{q/2}_{\vL^2}\bigr]\\\nonumber
		&\leq C_qk\sum_{n=1}^M \mE\bigl[\|\vQ_h\vE^{n-1}\|^{q}_{\vL^2}\bigr] \\\nonumber
		&\qquad+ C_q k \sum_{n=1}^M \mE\bigl[\|\vu^{n-1} - \vQ_h\vu^{n-1}\|^{q/2}_{\vL^2}\|\vQ_h\vE^{n-1}\|^{q/2}_{\vL^2}\bigr] \\\nonumber
		&\leq C_qk\sum_{n=1}^M \mE\bigl[\|\vQ_h\vE^{n-1}\|^{q}_{\vL^2}\bigr] + C_q k \sum_{n=1}^M \mE\bigl[\|\vu^{n-1} - \vQ_h\vu^{n-1}\|^{q}_{\vL^2}\bigr] \\\nonumber
			&\leq C_qk\sum_{n=1}^M \mE\bigl[\|\vQ_h\vE^{n-1}\|^{q}_{\vL^2}\bigr] + C_q h^{q} k \sum_{n=1}^M \mE\bigl[\|\nab\vu^{n-1}\|^{q}_{\vL^2}\bigr].
	\end{align}
	
	Substituting \eqref{eq4.20} and \eqref{eq4.21} to the right-hand side of \eqref{eq419} yields
	\begin{align}\label{eq4.22}
	\mE\bigl[\max_{1\leq \ell\leq M}&\|\vQ_h\vE^{\ell}\|^{q}_{\vL^2}\bigr] + \mE\biggl[\Bigl(\nu k \sum_{n=1}^M\|\nab\vE^n\|^2_{\vL^2}\Bigr)^{q/2}\biggr]\\\nonumber
	 &\leq C_qh^{q} \mE\biggl[\Bigl(k\sum_{n=1}^{M}\|\vA\vu^{n}\|^2_{\vL^2}\Bigr)^{q/2}\biggr] + C_qh^{q} \mE\biggl[\Bigl(k\sum_{n=1}^{M}\|\nab p^n\|^2_{\vL^2}\Bigr)^{q/2}\biggr]\\\nonumber
	&\qquad+C_q h^{q} k\sum_{n=1}^M \mE\bigl[\|\nab\vu^{n-1}\|^{q}_{\vL^2}\bigr] + C_q k\sum_{n=1}^M\mE\bigl[\|\vQ_h\vE^{n-1}\|^{q}_{\vL^2}\bigr]\\\nonumber
	&\leq C_q h^{q} + C_q h^{q} \mE\biggl[\Bigl(k\sum_{n=1}^{M}\|\nab p^n\|^2_{\vL^2}\Bigr)^{q/2}\biggr]\\\nonumber
	&\qquad+  C_q k\sum_{n=1}^{M-1}\mE\bigl[\max_{1\leq \ell\leq n}\|\vQ_h\vE^{\ell}\|^{q}_{\vL^2}\bigr]\\\nonumber
	&\leq \biggl(C_q h^{q} + C_q h^{q} \mE\biggl[\Bigl(k\sum_{n=1}^{M}\|\nab p^n\|^2_{\vL^2}\Bigr)^{q/2}\biggr]\biggr) e^{C_qT},
	\end{align}
	where the discrete Gronwall inequality is used to obtain the last inequality. 
	
	The proof is now completed by using the triangular inequality $\|\vE^{n}\|_{\vL^2} \leq \|\vQ_h\vE^n\|_{\vL^2} + \|\vu^n - \vQ_h\vu^n\|_{\vL^2}$.
\end{proof}

We conclude this subsection by stating a pathwise error estimate for the velocity approximation by Algorithm 2 which is a direct corollary of the Kolmogorov Criteria (cf. Theorem \ref{kolmogorov}) and 
the high moment error estimates of Theorem \ref{theorem_fully}.

\begin{theorem}\label{theorem_pathwise_fully}
	Assume that the assumptions of Theorem \ref{theorem_fully} hold. Let $2\leq q <\infty$ and $0 < \gamma_2 < 1 - \frac{1}{q}$. Then, there exists a random variable $K_2 = K_2(\omega;C_3)$ with $\mE\bigl[|K_2|^q\bigr] < \infty$ such that there holds $\mP$-a.s.
	\begin{align}
	\max_{1\leq n \leq M}\|\vu^n - \vu^n_h\|_{\vL^2} + \Bigl(\nu k \sum_{n=1}^M\|\nab(\vu^n - \vu_h^n)\|^2_{\vL^2}\Bigr)^{1/2}  \leq K_2\biggl( h^{\gamma_2} + \Bigl(\frac{h}{\sqrt{k}}\Bigr)^{\gamma_2}\biggr),
	\end{align}
\end{theorem}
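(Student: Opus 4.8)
The plan is to read off an explicit convergence rate in the two discretization parameters $h$ and $k$ from the $q$th moment bound of Theorem \ref{theorem_fully}, and then to promote that moment bound to an almost-sure bound via the summability mechanism underlying the Kolmogorov Criteria of Theorem \ref{kolmogorov}. All the analytically hard work is already contained in Theorem \ref{theorem_fully} and Lemma \ref{stability_pressure}; what remains is essentially bookkeeping, exactly as in the two pathwise corollaries of the previous subsection.

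First I would make the right-hand side of Theorem \ref{theorem_fully} fully explicit. In the worst case $\vB:\vL^2\to\vH^1_{per}$, assertion (ii) of Lemma \ref{stability_pressure} (applied with exponent $q/2$ in place of $q$) gives $\mE\bigl[\bigl(k\sum_{n=1}^M\|\nab p^n\|^2_{\vL^2}\bigr)^{q/2}\bigr]\le C\,k^{-q/2}$, so Theorem \ref{theorem_fully} produces a bound of order $h^q+(h/\sqrt k)^q$; in the benign case $\vB:\vL^2\to\mV$, assertion (i) removes the second term and leaves $Ch^q$. Writing $Z_{h,k}:=\max_{1\le n\le M}\|\vE^n\|_{\vL^2}+\bigl(\nu k\sum_{n=1}^M\|\nab\vE^n\|^2_{\vL^2}\bigr)^{1/2}$ with $\vE^n=\vu^n-\vu^n_h$, and using $(a+b)^q\le 2^{q-1}(a^q+b^q)$ to dominate the moment of the sum $Z_{h,k}$ by the two moments already controlled in Theorem \ref{theorem_fully}, I obtain the single clean estimate
\begin{align*}
\mE\bigl[Z_{h,k}^q\bigr]\le C_3\bigl(h^q+(h/\sqrt k)^q\bigr).
\end{align*}

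Next I would invoke the Kolmogorov-type argument. Setting $\rho:=h+h/\sqrt k$ one has $\mE[Z_{h,k}^q]\le C\rho^q$, and for $0<\gamma_2<1$ the elementary inequality $\rho^{\gamma_2}\le h^{\gamma_2}+(h/\sqrt k)^{\gamma_2}$ shows it suffices to control $Z_{h,k}/\rho^{\gamma_2}$. Defining $K_2$ as the supremum of $Z_{h,k}/\rho^{\gamma_2}$ over the countable refinement family and estimating its $q$th moment by the sum $\sum \mE[Z_{h,k}^q]/\rho^{\gamma_2 q}\le C\sum\rho^{(1-\gamma_2)q}$, the series converges precisely because the discretization index $M=T/k$ ranges over $\mathbb{N}$ and $\sum_M M^{-(1-\gamma_2)q}<\infty$ iff $(1-\gamma_2)q>1$; this is exactly the threshold $\gamma_2<1-\tfrac1q$ appearing in the statement, and it is the origin of the $\tfrac1q$ loss relative to the moment rate. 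This yields a random variable $K_2$ with $\mE[|K_2|^q]<\infty$ satisfying $Z_{h,k}\le K_2\rho^{\gamma_2}\le K_2\bigl(h^{\gamma_2}+(h/\sqrt k)^{\gamma_2}\bigr)$ $\mP$-a.s., which is the claim.

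The step I expect to require the most care is the organization of the summability estimate in the two parameters $h$ and $k$ simultaneously: unlike the purely temporal estimates of the previous subsection, here the random prefactor must absorb both the spatial rate $h^{\gamma_2}$ and the CFL-type ratio $(h/\sqrt k)^{\gamma_2}$, so one must sum over a joint refinement sequence along which $\rho\to 0$ fast enough for $\sum\rho^{(1-\gamma_2)q}$ to converge. The factor $(h/\sqrt k)^{\gamma_2}$ itself is not an artifact of the argument but reflects the worst-case pressure stability of Lemma \ref{stability_pressure}(ii): when $\vB$ fails to map into the discretely divergence-free regime the pressure gradient blows up like $k^{-1/2}$ in this norm, so the method converges only conditionally, with the full rate $h^{\gamma_2}$ recovered in the benign case $\vB:\vL^2\to\mV$ where the second term drops out.
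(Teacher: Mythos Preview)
Your proposal is correct and follows the same approach as the paper, which gives no detailed argument at all: the paper simply states that the result is ``a direct corollary of the Kolmogorov Criteria (cf.~Theorem \ref{kolmogorov}) and the high moment error estimates of Theorem \ref{theorem_fully}.'' Your account is considerably more explicit than this one-line proof---in particular, your use of Lemma \ref{stability_pressure}(ii) to turn the pressure term into the factor $(h/\sqrt{k})^{q}$, and your identification of the threshold $\gamma_2<1-\tfrac1q$ as the summability condition $\sum \rho^{(1-\gamma_2)q}<\infty$, are exactly the mechanisms the paper leaves implicit, and your flagging of the joint $(h,k)$ refinement as the point requiring care is well placed (the paper does not address it either).
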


\subsection{High moment and pathwise error estimates for the fully discrete pressure approximation}
  In this subsection, we establish high moment and pathwise error estimates for the pressure approximation generated by Algorithm 2.  
  
\begin{theorem}\label{theorem_fully_pressure} 
	Let $2\leq q <\infty$, under the assumptions of Theorem \ref{theorem_fully}, there holds
	\begin{align}\label{eq4.24}
		\mE\biggl[\biggl\|k\sum_{n=1}^M\big(p^n - p^n_h\big)\biggr\|^{q}_{L^2}\biggr] \leq C_4\biggl(h^{q} + h^{q}\mE\Bigl[\Bigl(k\sum_{n=1}^M\|\nab p^n\|^2_{\vL^2}\Bigr)^{q/2}\Bigr]\,\biggr),
	\end{align}
	where $C_4 = C_4(\beta_1, C_3)$ and independent of $k, h$.
\end{theorem}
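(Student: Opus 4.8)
The plan is to mirror the argument of Theorem \ref{theorem_semi_pressure}, replacing the continuous inf-sup condition by its discrete counterpart \eqref{inf-sup_discrete} and feeding in the fully discrete velocity estimates of Theorem \ref{theorem_fully} in place of the semi-discrete ones. First I would apply the summation operator $\sum_{n=0}^{M-1}$ to the error equation \eqref{eq410}; the difference terms telescope to $(\vE^M - \vE^0,\pphi_h)$, where $\vE^0 = \vu_0 - \vQ_h\vu_0$, so that for every $\pphi_h\in\mH_h$ one obtains an identity expressing $\bigl(\div\pphi_h,\, k\sum_{n=1}^M(p^n - p^n_h)\bigr)$ in terms of $(\vE^M-\vE^0,\pphi_h)$, the time-averaged gradient $\nu k\sum_{n=1}^M(\nab\vE^n,\nab\pphi_h)$, and the discrete stochastic sum $\sum_{n=1}^M\bigl((\vB(\vu^{n-1})-\vB(\vu^{n-1}_h))\Delta W_n,\pphi_h\bigr)$.

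Because \eqref{inf-sup_discrete} only controls functions in $L_h$, I would split the time-averaged pressure error $\Psi := k\sum_{n=1}^M(p^n - p^n_h) = (\Psi - P_h\Psi) + P_h\Psi$, noting $P_h\Psi = k\sum_{n=1}^M(P_h p^n - p^n_h)\in L_h$ since $p^n_h\in L_h$. The projection part $\Psi - P_h\Psi = k\sum_{n=1}^M(p^n - P_h p^n)$ is treated directly: by the interpolation bound $\|p^n - P_h p^n\|_{L^2}\leq Ch\|\nab p^n\|_{\vL^2}$ and the discrete Cauchy--Schwarz inequality, $\|\Psi - P_h\Psi\|_{L^2}\leq Ch\,(k\sum_{n=1}^M\|\nab p^n\|^2_{\vL^2})^{1/2}$, which after raising to the $q$th power and taking expectation contributes exactly the second term on the right-hand side of \eqref{eq4.24}. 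For $P_h\Psi$ I would invoke \eqref{inf-sup_discrete}, write $(\div\pphi_h, P_h\Psi) = (\div\pphi_h,\Psi) - (\div\pphi_h,\Psi - P_h\Psi)$, substitute the summed identity into the first inner product, and bound the second using $\|\div\pphi_h\|_{\vL^2}\leq C\|\nab\pphi_h\|_{\vL^2}$ together with the projection estimate just obtained.

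Dividing by $\|\nab\pphi_h\|_{\vL^2}$, taking the supremum over $\pphi_h\in\mH_h$, and using \eqref{inf-sup_discrete} and the Poincar\'e inequality $\|\pphi_h\|_{\vL^2}\leq C\|\nab\pphi_h\|_{\vL^2}$ then yields
\begin{align*}
\beta_1\|P_h\Psi\|_{L^2} &\leq C\|\vE^M\|_{\vL^2} + C\|\vE^0\|_{\vL^2} + \nu\Bigl\|k\sum_{n=1}^M\nab\vE^n\Bigr\|_{\vL^2} \\
&\quad + C\Bigl\|\sum_{n=1}^M(\vB(\vu^{n-1}) - \vB(\vu^{n-1}_h))\Delta W_n\Bigr\|_{\vL^2} + Ch\Bigl(k\sum_{n=1}^M\|\nab p^n\|^2_{\vL^2}\Bigr)^{1/2}.
\end{align*}
Raising to the $q$th power and taking expectation, the deterministic terms are controlled one by one: $\mE[\|\vE^M\|^q_{\vL^2}]$ and $\mE[\|k\sum_{n=1}^M\nab\vE^n\|^q_{\vL^2}]$ (the latter via Cauchy--Schwarz in time against the bound $\mE[(\nu k\sum_{n=1}^M\|\nab\vE^n\|^2_{\vL^2})^{q/2}]$) follow from Theorem \ref{theorem_fully}; $\mE[\|\vE^0\|^q_{\vL^2}]\leq Ch^q\mE[\|\nab\vu_0\|^q_{\vL^2}]\leq Ch^q$ by the interpolation estimate for $\vQ_h$; and the final term reproduces the right-hand side of \eqref{eq4.24}.

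I expect the stochastic term to be the main point requiring care. I would view $\sum_{n=1}^M(\vB(\vu^{n-1}) - \vB(\vu^{n-1}_h))\Delta W_n$ as the It\^o integral $\int_0^{T}\Phi(s)\,dW(s)$ of the adapted piecewise-constant integrand $\Phi(s) = \vB(\vu^{n-1}) - \vB(\vu^{n-1}_h)$ on $[t_{n-1},t_n)$, apply the moment inequality \eqref{ito4}, and then use the Lipschitz property \eqref{eq2.6a} to get $\mE[\|\int_0^T\Phi\,dW\|^q_{\vL^2}]\leq C_q k\sum_{n=1}^M\mE[\|\vE^{n-1}\|^q_{\vL^2}]\leq CT\,\mE[\max_{1\leq n\leq M}\|\vE^n\|^q_{\vL^2}]$, which is again absorbed by Theorem \ref{theorem_fully}. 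Collecting all bounds, using $\|\Psi\|_{L^2}\leq\|\Psi - P_h\Psi\|_{L^2} + \|P_h\Psi\|_{L^2}$, and dividing by $\beta_1^q$ gives the claimed estimate with $C_4$ depending only on $\beta_1$ and $C_3$.
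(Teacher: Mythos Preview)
Your proposal is correct and follows the paper's overall strategy: sum the error equation \eqref{eq410} over $n$, invoke the discrete inf-sup condition, and control the resulting velocity and noise terms via Theorem \ref{theorem_fully}. You depart from the paper in two technical respects, both of which make your argument cleaner. First, you split $\Psi = (\Psi - P_h\Psi) + P_h\Psi$ and apply \eqref{inf-sup_discrete} only to the $L_h$-component $P_h\Psi$; the paper applies \eqref{inf-sup_discrete} directly to $k\sum_{n=1}^M(p^n - p_h^n)$ without addressing that $p^n\notin L_h$, so your treatment is the more careful one and produces the projection contribution $h^{q}\mE\bigl[(k\sum_n\|\nab p^n\|^2_{\vL^2})^{q/2}\bigr]$ explicitly. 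Second, for the stochastic term you keep $\bigl\|\sum_{n}(\vB(\vu^{n-1})-\vB(\vu_h^{n-1}))\Delta W_n\bigr\|_{\vL^2}$ intact and bound its $q$th moment via the It\^o inequality \eqref{ito4}, whereas the paper first passes to $\sum_{n}\|(\vB(\vu^{n-1})-\vB(\vu_h^{n-1}))\Delta W_n\|_{\vL^2}$ by the triangle inequality and then appeals to the discrete-H\"older argument of \eqref{eq3.7}--\eqref{eq38}; your route avoids the extra powers of $M$ that the triangle-inequality path introduces and reaches the bound more directly.
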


\begin{proof}
	The proof of \eqref{eq4.24} mimics that of Theorem \ref{theorem_semi_pressure}. 
	Let $\E_p^n = p^n - p^n_h$ and $\vE^n = \vu^n - \vu^n_h$ be the same as in Theorem \ref{theorem_fully}. Applying the summation operator $\sum_{n=1}^{M}$ to the pressure error 
	equation, we obtain
	\begin{align}\label{eq4.25}
		\Bigl(k\sum_{n=1}^M\E_p^n, \div\pphi_h\Bigr) &= \bigl(\vE^M - \vE^0,\pphi_h\bigr) + \nu k \sum_{n=1}^M\bigl(\nab\vE^n, \nab\pphi_h\bigr) \\\nonumber
		&\qquad- \sum_{n=1}^M\bigl((\vB(\vu^{n-1}) - \vB(\vu^{n-1}_h))\Delta W_n, \pphi_h\bigr).
	\end{align}
Using Schwarz inequality, we get
	\begin{align}\label{eq4.26}
		\frac{\Bigl(k\sum_{n=1}^M\E_p^n, \div\pphi_h\Bigr)}{\|\nab\pphi_h\|_{\vL^2}} &\leq C\bigl(\|\vE^M\|_{\vL^2} + \|\vE^0\|_{\vL^2}\bigr) + \nu k\sum_{n=1}^M\|\nab\vE^n\|_{\vL^2} \\\nonumber
		&\qquad+ C\sum_{n=1}^M\|\bigl(\vB(\vu^{n-1}) - \vB(\vu^{n-1}_h)\bigr)\Delta W_n\|_{\vL^2}.
	\end{align}
	
	Next, applying the discrete inf-sup condition \eqref{inf-sup_discrete} on the left-hand side yields
	\begin{align}\label{eq4.27}
	\beta_1\Bigl\|k\sum_{n=1}^M\E_p^n\Bigr\|_{L^2} &\leq C\bigl(\|\vE^M\|_{\vL^2} + \|\vE^0\|_{\vL^2}\bigr) + \nu k\sum_{n=1}^M\|\nab\vE^n\|_{\vL^2} \\\nonumber
	&\qquad+ C\sum_{n=1}^M\|\bigl(\vB(\vu^{n-1}) - \vB(\vu^{n-1}_h)\bigr)\Delta W_n\|_{\vL^2}.
	\end{align}
	Taking the $q$-power to \eqref{eq4.27} followed by taking the expectations, we obtain
	\begin{align}\label{eq4.28}
	\beta_1^{q}\mE\biggl[\Bigl\|k\sum_{n=1}^M\E_p^n\Bigr\|^{q}_{L^2}\biggr] &\leq C_q\mE\Bigl[\|\vE^M\|^{q}_{\vL^2} + \|\vE^0\|^{q}_{\vL^2}\Bigr] \\ \nonumber
	&\quad  + C_q\mE\biggl[\biggl(\nu k\sum_{n=1}^M\|\nab\vE^n\|^2_{\vL^2}\biggr)^{q/2}\biggr] \\\nonumber
	&\quad + C_q\mE\biggl[\biggl(\sum_{n=1}^M\|\bigl(\vB(\vu^{n-1}) - \vB(\vu^{n-1}_h)\bigr)\Delta W_n\|_{\vL^2}\biggr)^{q}\biggr].
	\end{align}
	
	The first three terms on the right side of \eqref{eq4.28} can be controlled by Theorem \ref{theorem_fully}, and the noise term can be bounded similarly as in \eqref{eq3.7} and \eqref{eq38} and using Theorem \ref{theorem_fully}. In summary, we obtain 
	\begin{align}
		\mE\biggl[\Bigl\|k\sum_{n=1}^M\E_p^n\Bigr\|^{q}_{L^2}\biggr] &\leq \beta_1^{q} C_3\biggl(h^{q} + h^{q}\mE\Bigl[\Bigl(k\sum_{n=1}^M\|\nab p^n\|^2_{\vL^2}\Bigr)^{q/2}\Bigr]\,\biggr).
	\end{align}
	Hence, the proof is complete.	
\end{proof}

An immediate consequence of the above high moment error estimates is the following pathwise error estimate for the pressure approximation $\{p_h^n\}$, its proof follows from an application of Theorem \ref{kolmogorov}.  

\begin{theorem}\label{cor_4.5}
	Assume that the assumptions of Theorem \ref{theorem_fully_pressure} hold. Let $2\leq q <\infty$ and $0 < \gamma_2 < 1 - \frac{1}{q}$. Then, there exists a random variable $K_2 = K_2(\omega;C_4)$ with $\mE\bigl[|K_2|^q\bigr] < \infty$ such that there holds $\mP$-a.s.
	\begin{align}
	\Bigl\|k\sum_{n=1}^M\bigl(p^n - p^n_h\bigr)\Bigr\|_{L^2} \leq K_2\biggl( h^{\gamma_2} + \Bigl(\frac{h}{\sqrt{k}}\Bigr)^{\gamma_2}\biggr).
	\end{align}
\end{theorem}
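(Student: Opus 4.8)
The plan is to treat Theorem \ref{cor_4.5} as a short corollary of the high-moment bound of Theorem \ref{theorem_fully_pressure}, exactly parallel to the pathwise velocity estimate of Theorem \ref{theorem_pathwise_fully}: one upgrades a $q$th-moment estimate to an almost sure estimate at the price of $\tfrac1q$ in the exponent. First I would render the right-hand side of \eqref{eq4.24} explicit in $h$ and $k$. Theorem \ref{theorem_fully_pressure} gives
\[
\mE\Bigl[\bigl\|k\textstyle\sum_{n=1}^M(p^n-p^n_h)\bigr\|_{L^2}^{q}\Bigr] \le C_4\Bigl(h^{q} + h^{q}\,\mE\bigl[\bigl(k\textstyle\sum_{n=1}^M\|\nab p^n\|_{\vL^2}^2\bigr)^{q/2}\bigr]\Bigr),
\]
and applying Lemma \ref{stability_pressure}(ii) (the regime $\vB:\vL^2\to\vH^1_{per}$) with $q/2$ in the role of the integer exponent bounds the pressure-stability factor by $C\,k^{-q/2}$. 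Hence
\[
\Bigl(\mE\Bigl[\bigl\|k\textstyle\sum_{n=1}^M(p^n-p^n_h)\bigr\|_{L^2}^{q}\Bigr]\Bigr)^{1/q} \le C\Bigl(h + \tfrac{h}{\sqrt k}\Bigr) =: C\rho,
\]
a clean $L^q(\Ome)$ bound of order one in the composite small parameter $\rho := h + h/\sqrt k$. (Under the stronger hypothesis $\vB:\vL^2\to\mV$ of Lemma \ref{stability_pressure}(i) the second term drops and only the $h^{\gamma_2}$ contribution survives; retaining both matches Theorem \ref{theorem_pathwise_fully} and covers the general case.)

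Second, I would perform the moment-to-pathwise conversion encoded in the Kolmogorov criterion, Theorem \ref{kolmogorov}. Viewing $\rho$ as the continuity variable and the error as (the increment of) a process in $\rho$, the order-one $L^q$ estimate above is precisely the hypothesis $\mE[\|\vX(t)-\vX(s)\|^{\beta}] \le C|t-s|^{1+\alpha}$ with $\beta = q$ and $1+\alpha = q$, i.e. $\alpha = q-1$. Theorem \ref{kolmogorov} then yields almost sure H\"older regularity of exponent $\gamma_2 < \alpha/\beta = 1 - \tfrac1q$, together with a random H\"older constant $K_2 = K_2(\omega;C_4)$ satisfying $\mE[|K_2|^{q}]<\infty$. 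Using $(a+b)^{\gamma_2}\le a^{\gamma_2}+b^{\gamma_2}$ for $\gamma_2\in(0,1)$ gives, $\mP$-a.s.,
\[
\bigl\|k\textstyle\sum_{n=1}^M(p^n-p^n_h)\bigr\|_{L^2} \le K_2\,\rho^{\gamma_2} \le K_2\bigl(h^{\gamma_2}+(h/\sqrt k)^{\gamma_2}\bigr),
\]
which is the asserted estimate. Equivalently, one may run the Chebyshev--Borel--Cantelli argument over the sequence $M\in\mathbb{N}$ (with $k=T/M$): setting $K_2=\bigl(\sum_M(\rho^{-\gamma_2}\|k\sum_n(p^n-p^n_h)\|_{L^2})^{q}\bigr)^{1/q}$, the condition $\gamma_2<1-\tfrac1q$ is exactly what forces $\mE[K_2^{q}]=\sum_M\rho^{-\gamma_2 q}\mE[\|\cdots\|^{q}]\le C\sum_M\rho^{(1-\gamma_2)q}<\infty$ while $\|k\sum_n(p^n-p^n_h)\|_{L^2}\le K_2\rho^{\gamma_2}$ holds termwise.

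The step I expect to require the most care is reconciling Theorem \ref{kolmogorov}, which is stated for a genuinely time-continuous process $\vX(t)$, with the present quantity $k\sum_{n=1}^M(p^n-p^n_h)$, which carries no free time index (it is summed to the terminal level $M$) and instead forms a family parametrized by the discretization. One must therefore either construct an auxiliary process, continuous in the refinement parameter $\rho$, whose increments reproduce the errors and inherit the order-one $L^q$ bound, or else invoke the equivalent Borel--Cantelli formulation above; both are what ``an application of Theorem \ref{kolmogorov}'' abbreviates, and the latter makes the finiteness $\mE[|K_2|^q]<\infty$ transparent. A secondary point to monitor is the two-parameter nature of the bound: since the controlling quantity is $h/\sqrt k$ rather than $h$ alone, the random constant $K_2$ must be uniform over the joint $(h,k)$ refinement, and a genuine decay rate is obtained only in the regime $k\gtrsim h^2$, outside of which $h/\sqrt k$ need not tend to zero. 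All the analytic content is already supplied by Theorem \ref{theorem_fully_pressure} and Lemma \ref{stability_pressure}, so no further estimates are needed.
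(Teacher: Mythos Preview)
Your proposal is correct and follows essentially the same approach as the paper: the paper's proof consists of the single remark that the result ``follows from an application of Theorem \ref{kolmogorov}'' to the high-moment bound of Theorem \ref{theorem_fully_pressure} (combined with Lemma \ref{stability_pressure}), and you carry out precisely this program, additionally making explicit the Borel--Cantelli reformulation needed to accommodate the fact that the error is indexed by the discretization parameter rather than by a continuous time variable. Your caveats about the two-parameter regime $k\gtrsim h^2$ and the construction of $K_2$ are well taken and go beyond what the paper spells out.
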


We conclude this section by stating the global error estimates for our fully discrete numerical 
solution generated by Algorithm 2 by combining the above temporal and spatial error estimates. 

\begin{theorem} 
	Let $2 \leq q <\infty$ and $0< \gamma <\frac12$, under the assumptions of Theorem \ref{theorem_semi} and Theorem \ref{theorem_fully}, there exists a constant $C = C(D_T, \vu_0, q, \vf)>0$ such that
	\begin{align}\label{eq4.31}
		\mE\biggl[\max_{1\leq n \leq M}\|\vu(t_n) - \vu^n_h\|^q_{\vL^2} &+ \biggl\|\nu k \sum_{n=1}^{M}\nab\bigl(\vu(t_n) - \vu^n_h\bigr)\biggr\|^q_{\vL^2}\biggr] \\\nonumber
		&\leq C\biggl(k^{\gamma q} + h^q + \Bigl(\frac{h}{\sqrt{k}}\Bigr)^{q}\biggr).
	\end{align}
	In addition, let $2<q<\infty$ and $0<\gamma < \frac12$ such that $\gamma - \frac{1}{q}>0$ and $1 - \frac{1}{q}>0$. Then, for any $0<\gamma_1 < \gamma - \frac{1}{q}$ and $0<\gamma_2 < 1 - \frac{1}{q}$, there exists a random variable $K$ with $\mE[|K|^q] < \infty$ such that there holds $\mP-a.s.$
	\begin{align}\label{eq5.2}
	\max_{1\leq n \leq M}\|\vu(t_n) - \vu^n_h\|_{\vL^2} &+ \biggl\|\nu k \sum_{n=1}^{M}\nab\bigl(\vu(t_n) - \vu^n_h\bigr)\biggr\|_{\vL^2} \\\nonumber
	&\leq K\biggl(k^{\gamma_1} + h^{\gamma_2} + \Bigl(\frac{h}{\sqrt{k}}\Bigr)^{\gamma_2}\biggr). 
	\end{align}
\end{theorem}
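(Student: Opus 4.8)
The plan is to obtain both \eqref{eq4.31} and \eqref{eq5.2} by splitting the total error through the intermediate semidiscrete solution,
$$\vu(t_n) - \vu^n_h = \bigl(\vu(t_n) - \vu^n\bigr) + \bigl(\vu^n - \vu^n_h\bigr),$$
and then combining the temporal estimates of Section \ref{section_semi} with the spatial estimates of this section via the triangle inequality. For the high-moment bound \eqref{eq4.31}, the $\vL^2$ part is immediate: the maximum of $\|\vu(t_n)-\vu^n\|_{\vL^2}$ is handled by Corollary \ref{cor3.4} (yielding $k^{\gamma q}$) and the maximum of $\|\vu^n-\vu^n_h\|_{\vL^2}$ by Theorem \ref{theorem_fully}. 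For the time-averaged $\vH^1$ part I would treat the two summands separately: the temporal summand $\|\nu k\sum_n\nab(\vu(t_n)-\vu^n)\|_{\vL^2}$ is controlled directly in its weak (averaged) form by Theorem \ref{theorem_semi_veolocityH1}.

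The one nonroutine conversion occurs for the spatial summand, since Theorem \ref{theorem_fully} delivers the \emph{strong} norm $\bigl(\nu k\sum_n\|\nab\vE^n\|^2_{\vL^2}\bigr)^{q/2}$ whereas \eqref{eq4.31} asks for the \emph{weak}, time-averaged norm $\|\nu k\sum_n\nab\vE^n\|^q_{\vL^2}$, where $\vE^n=\vu^n-\vu^n_h$. I would bridge these by the Cauchy--Schwarz inequality in the summation index, using $kM=T$:
$$\Bigl\|\nu k\sum_{n=1}^M\nab\vE^n\Bigr\|^2_{\vL^2} \leq \nu^2 k^2 M\sum_{n=1}^M\|\nab\vE^n\|^2_{\vL^2} = \nu T\Bigl(\nu k\sum_{n=1}^M\|\nab\vE^n\|^2_{\vL^2}\Bigr),$$
so that, after raising to the power $q/2$ and taking expectation, the weak norm is dominated (up to the harmless constant $(\nu T)^{q/2}$) by the strong norm already estimated in Theorem \ref{theorem_fully}. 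It then remains to dispose of the pressure-stability factor $\mE\bigl[(k\sum_n\|\nab p^n\|^2_{\vL^2})^{q/2}\bigr]$ appearing on the right-hand side of Theorem \ref{theorem_fully}. Here I would invoke Lemma \ref{stability_pressure}(ii), which gives $\mE\bigl[(k\sum_n\|\nab p^n\|^2_{\vL^2})^{q/2}\bigr]\leq C\,k^{-q/2}$ for the general (non-solenoidal) multiplicative noise; multiplying by the prefactor $h^q$ converts the term $h^q\,\mE[\cdots]$ precisely into $(h/\sqrt{k})^q$. Collecting the temporal contribution $k^{\gamma q}$, the spatial contribution $h^q$, and this last term produces the asserted bound $k^{\gamma q}+h^q+(h/\sqrt{k})^q$. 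I note that the $(h/\sqrt{k})^q$ term is an artifact of the worst-case noise in Lemma \ref{stability_pressure}(ii); for divergence-free noise $\vB:\vL^2\to\mV$ one could use part (i) and drop it.

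Finally, the pathwise estimate \eqref{eq5.2} follows from the same decomposition applied pathwise. The temporal summand is bounded by the pathwise velocity estimate \eqref{eq3.41} (which is already stated in the averaged $\vH^1$ norm, matching \eqref{eq5.2}), and the spatial summand by Theorem \ref{theorem_pathwise_fully}, after the same Cauchy--Schwarz passage from the strong to the weak $\vH^1$ norm. Setting $K := C(K_1+K_2)$ with the two random constants supplied by those results, and noting $\mE[|K|^q]<\infty$ since $\mE[|K_1|^q],\mE[|K_2|^q]<\infty$, yields \eqref{eq5.2}. The only point requiring care throughout is ensuring that the strong-to-weak norm conversion introduces no adverse $k$-dependence, which the identity $k^2M=kT$ above confirms; everything else is a bookkeeping combination of previously established estimates, so I do not anticipate a genuine obstacle beyond that conversion and the correct bookkeeping of the pressure factor.
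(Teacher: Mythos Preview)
Your proposal is correct and follows exactly the approach the paper intends: the paper states this theorem without proof, prefacing it only with the remark that the global estimates are obtained ``by combining the above temporal and spatial error estimates,'' which is precisely your triangle-inequality splitting through the semidiscrete solution $\vu^n$. Your handling of the strong-to-weak $\vH^1$ norm conversion via Cauchy--Schwarz in the time index and your use of Lemma~\ref{stability_pressure}(ii) to produce the $(h/\sqrt{k})^q$ factor are the right bookkeeping steps, and your side remark that the solenoidal case (i) would eliminate this term anticipates exactly the improvement exploited in Section~\ref{section_optimal}.
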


\begin{theorem} 
	Let $2 \leq q <\infty$ and $0< \gamma <\frac12$. Under the assumptions of Theorem \ref{theorem_semi_pressure} and Theorem \ref{theorem_fully_pressure}, there exists a constant $C = C(D_T, \vu_0, q, \vf,\beta_0,\beta_1)>0$ such that for $1 \leq \ell \leq M$ 
	\begin{align}\label{eq4.33}
		\mE\biggl[\biggl\|P(t_{\ell}) - k\sum_{n=1}^{\ell} p^n_h\biggr\|^q_{L^2}\biggr] \leq C\biggl(k^{\gamma q} + h^q + \Bigl(\frac{h}{\sqrt{k}}\Bigr)^q\biggr),
	\end{align}
	In addition, let $2<q<\infty$ and $0<\gamma < \frac12$ such that $\gamma - \frac{1}{q}>0$ and $1 - \frac{1}{q}>0$. Then, for any $0<\gamma_1 < \gamma - \frac{1}{q}$ and $0<\gamma_2 < 1 - \frac{1}{q}$, there exists a random variable $K$ with $\mE[|K|^q] < \infty$ such that there holds $\mP-a.s.$
	\begin{align}\label{eq4.34}
\biggl\|P(t_{\ell}) - k\sum_{n=1}^{\ell} p^n_h\biggr\|_{L^2} \leq K\biggl(k^{\gamma_1} + h^{\gamma_2} + \Bigl(\frac{h}{\sqrt{k}}\Bigr)^{\gamma_2}\biggr).
	\end{align}
\end{theorem}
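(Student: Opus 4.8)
The plan is to derive both \eqref{eq4.33} and \eqref{eq4.34} by splitting the global pressure error into its temporal and spatial parts and invoking the component estimates already proved, using the semi-discrete pressure $k\sum_{n=1}^{\ell} p^n$ as the intermediate comparison object. First I would write, by the triangle inequality,
\begin{align*}
\biggl\|P(t_{\ell}) - k\sum_{n=1}^{\ell} p^n_h\biggr\|_{L^2} &\leq \biggl\|P(t_{\ell}) - k\sum_{n=1}^{\ell} p^n\biggr\|_{L^2} \\
&\qquad + \biggl\|k\sum_{n=1}^{\ell} \bigl(p^n - p^n_h\bigr)\biggr\|_{L^2},
\end{align*}
then take the $q$th power, use $(a+b)^q \leq 2^{q-1}(a^q+b^q)$, and take expectations, which reduces \eqref{eq4.33} to bounding the two resulting moments separately.

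For the temporal part, Theorem \ref{theorem_semi_pressure} gives $\mE[\|P(t_{\ell}) - k\sum_{n=1}^{\ell} p^n\|^q_{L^2}] \leq C\,k^{\gamma q}$ at once, producing the $k^{\gamma q}$ contribution. For the spatial part I would apply Theorem \ref{theorem_fully_pressure}, or more precisely its $\ell$-truncated version, whose proof is verbatim the stated one with $M$ replaced by $\ell$ (the velocity bounds of Theorem \ref{theorem_fully} already hold uniformly in $\ell$). This yields $\mE[\|k\sum_{n=1}^{\ell}(p^n-p^n_h)\|^q_{L^2}] \leq C_4\bigl(h^q + h^q\,\mE[(k\sum_{n=1}^{M}\|\nab p^n\|^2_{\vL^2})^{q/2}]\bigr)$, so the only remaining quantity to control is $\mE[(k\sum_{n=1}^{M}\|\nab p^n\|^2_{\vL^2})^{q/2}]$.

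This last quantity is exactly the high-moment pressure stability bound. Since $q/2 \geq 1$, part (ii) of Lemma \ref{stability_pressure} (the general, non-solenoidal noise case $\vB:\vL^2\to\vH^1_{per}$) applied with exponent $q/2$ gives $\mE[(k\sum_{n=1}^{M}\|\nab p^n\|^2_{\vL^2})^{q/2}] \leq C\,k^{-q/2}$, whence $h^q\,\mE[(k\sum\|\nab p^n\|^2_{\vL^2})^{q/2}] \leq C\,(h/\sqrt k)^q$. This is precisely the origin of the $(h/\sqrt k)^q$ term, and combined with the bare $h^q$ and the temporal $k^{\gamma q}$ it yields \eqref{eq4.33}.

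Finally, for the pathwise bound \eqref{eq4.34}, I would invoke Kolmogorov's criterion (Theorem \ref{kolmogorov}) applied to the moment estimate \eqref{eq4.33}, exactly as in Theorem \ref{cor_4.5}; the loss of $\tfrac1q$ in the exponents (so that $\gamma_1 < \gamma - \tfrac1q$ and $\gamma_2 < 1 - \tfrac1q$) is the standard Kolmogorov loss. Equivalently, one can combine the pathwise temporal estimate \eqref{eq3.52} with the ($\ell$-version of the) pathwise spatial estimate of Theorem \ref{cor_4.5} through the same triangle inequality and set $K := K_1 + K_2$, for which $\mE[|K|^q] < \infty$ follows from the finiteness of each $\mE[|K_i|^q]$. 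I expect the only genuinely nontrivial point to be the bookkeeping that forces the use of case (ii) of Lemma \ref{stability_pressure}: the general noise makes the discrete pressure gradient blow up like $k^{-1/2}$ in the relevant norm, and it is this blow-up that degrades the spatial rate from $h^q$ to $(h/\sqrt k)^q$; everything else is a routine triangle-inequality assembly of results established earlier.
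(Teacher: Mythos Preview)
Your proposal is correct and follows exactly the approach the paper intends: the paper does not write out a proof for this theorem but simply states that the global estimates follow ``by combining the above temporal and spatial error estimates,'' which is precisely your triangle-inequality decomposition into the temporal piece (handled by Theorem~\ref{theorem_semi_pressure}) and the spatial piece (handled by Theorem~\ref{theorem_fully_pressure} together with Lemma~\ref{stability_pressure}(ii) for the $k^{-q/2}$ blow-up of the discrete pressure gradient). Your observations that Theorem~\ref{theorem_fully_pressure} must be used in its $\ell$-truncated form and that Lemma~\ref{stability_pressure}(ii) is the source of the $(h/\sqrt{k})^q$ degradation are exactly the bookkeeping points the paper leaves implicit.
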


\begin{remark}\label{remark4.2} 
	The error bounds for the velocity and pressure approximations contain a ``bad" factor $k^{-\frac12}$, however, the numerical tests of \cite{Feng} showed that this dependence is 
	sharp when $q=2$ for the standard mixed finite element method in the case of general multiplicative noises.
	Recently, a modified mixed method was proposed in \cite{Feng1} which eliminates the $k^{-\frac12}$ factor in \eqref{eq4.31}--\eqref{eq4.34} when $q=2$ and hence achieve optimal order error estimates. In the last section, we shall also drive high moment and pathwise error estimates for that modified mixed method.
\end{remark}

\section{Extension to a modified mixed finite element method}\label{section_optimal}
In this section, we consider the modified mixed formulations/methods for Algorithm 1 and 2 which were proposed in \cite{Feng1}. Our goal is to obtain improved high moment and pathwise error estimates for both modified algorithms as alluded in Remark \ref{remark4.2}. 

First, we recall that the modified formulation of Algorithm 1 reads below. 

\smallskip
\noindent
{\bf Algorithm 3}

Let ${\bf u}^0={\bf u}_0$.  For $n=0,1,\ldots, M-1$ and a fixed $\omega \in \Ome$ do the following steps in the $\mP-a.s.$ sense: 

\smallskip
{\em Step 1:}    Find $\xi^{n} \in  H^1_{per}(D)$ by solving 
\begin{equation}\label{eq_WF_xi1a}
\big({\nab\xi^{n}}, \nab\phi\big) = \big({\bf B}({\bf u}^n),\nab\phi\big) 
\qquad \forall \, \phi\in H^1_{per}(D)\,.
\end{equation}

\smallskip
{\em Step 2:} Set $\pmb{\eta}^n := {\bf B}({\bf u}^n)-\nabla \xi^{n}$, and find
$({\bf u}^{n+1},r^{n+1}) \in {\mathbb V} \times
L^2_{per}(D)$ by solving 
\begin{subequations}\label{eq_new}
	\begin{align} \label{eq_newa} 
	\big({\bf u}^{n+1},{\bf v}\big) + & k \big(\nabla {\bf u}^{n+1}, \nabla {\bf v}\big)- k \big(\div {\bf v}, r^{n+1} \big) \\
	&\quad  = \big({\bf u}^n,{\bf v}\big)  + k\big(\vf^{n+1},\mathbf{v}\big) +  \big( \pmb{\eta}^n \Delta W_{n+1}  ,{\bf v}\big) 
	\quad \forall \, {\bf v} \in \vH^1_{per}(D),  \nonumber\\
	\big(\div {\bf u}^{n+1},q&\big) = 0 \qquad\forall \, q\in L^2_{per}(D)\,.\label{eq_newb}
	\end{align}
\end{subequations}

\smallskip
{\em Step 3:} {Define $p^{n+1} := r^{n+1} + k^{-1} \xi^n \Delta W_{n+1} $.} 

\smallskip
We notice that Step 1 computes the Helmholtz projection of $\vB(\vu^n)$ at each time step and hence 
creates a divergent-free noise $\eeta^n = \vP_{\mH}\vB(\vu^n) = \vB(\vu^n) - \nab\xi^n$ in Step 2.  Thus, In Step 2 we compute the velocity approximations $\{\vu^{n+1}\}$ and the pseudo pressure approximation $\{r^{n+1}\}$ with the divergent-free noise $\eeta_h^n\Delta W_{n+1}$ which ensures a  uniform bound in $k$ for the pseudo pressure approximation as stated below. 

\begin{lemma}\label{stability_optimal}
	Let $\{ ({\bf u}^{n+1}, r^{n+1})\}_n$ be generated by {\rm Algorithm 3}. Let $1\leq q < \infty$ and assume that $\vu_0\in L^{2^q}(\Ome;\mV)$. Then, there exists $C = C(T,q)>0$ such that 
	\begin{enumerate}[(a)]
	\item\qquad  $\displaystyle{\mathbb E}\biggl[\max_{1\leq n\leq M}\|\nabla {\bf u}^n\|^{2^q}_{\mV} + \Bigl(\nu k\sum^M_{n=1}\|{\bf A} {\bf u}^n\|_{\vL^2}^2\Bigr)^q \biggr] 
	\leq C\,,$  \\ \label{eq3.4a}
    \item\qquad $\displaystyle{\mathbb E}\biggl[\Bigl(k\sum_{n=1}^M \|\nabla r^n\|^2_{\vL^2}\Bigr)^q \biggr]
	\leq	C\,$ .  	
	\end{enumerate}	
\end{lemma}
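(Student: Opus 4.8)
The plan is to establish the two assertions separately: (a) by a now-standard energy estimate combined with the bootstrap device of Theorem~\ref{theorem_semi}, and (b) by exploiting the divergence-free structure of the projected noise $\eeta^n=\vP_{\mH}\vB(\vu^n)$, which is precisely the feature that removes the negative power of $k$ seen in Lemma~\ref{stability_pressure}. Following the convention of the preceding sections I set $\vf=0$, a nonzero $\vf$ contributing only lower-order terms under the stated regularity.

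For assertion (a), I would test the momentum equation \eqref{eq_newa} with $\vv=\vA\vu^{n+1}$. Because $\vu^{n+1},\vu^n\in\mV$ and $\vA\vu^{n+1}\in\mH$ is divergence free, the pressure term drops and $(\vu^{n+1}-\vu^n,\vA\vu^{n+1})=(\nab(\vu^{n+1}-\vu^n),\nab\vu^{n+1})$, so the algebraic identity $2(a-b,a)=\|a\|^2-\|b\|^2+\|a-b\|^2$ gives
\begin{align*}
\frac12\bigl(\|\nab\vu^{n+1}\|^2_{\vL^2}-\|\nab\vu^n\|^2_{\vL^2}+\|\nab(\vu^{n+1}-\vu^n)\|^2_{\vL^2}\bigr)+k\|\vA\vu^{n+1}\|^2_{\vL^2}=\bigl(\eeta^n\Delta W_{n+1},\vA\vu^{n+1}\bigr).
\end{align*}
The key observation is that the noise coefficient obeys the same linear-growth bound as $\vB$, namely $\|\eeta^n\|_{\vL^2}=\|\vP_{\mH}\vB(\vu^n)\|_{\vL^2}\le\|\vB(\vu^n)\|_{\vL^2}\le C(1+\|\vu^n\|_{\vL^2})$, since the Helmholtz projection is nonexpansive on $\vL^2_{per}(D)$. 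Thus the martingale term is handled exactly as in the $q=1$ case of \cite{Feng1}: summing in $n$, splitting the stochastic term into a zero-mean martingale part and an It\^o-correction part, one controls the maximum-in-time moment by the Burkholder--Davis--Gundy inequality and closes with the discrete Gronwall inequality, in full analogy with Lemma~\ref{stability_mean}. The passage to the $2^q$-th moment is the bootstrap of Theorem~\ref{theorem_semi}: multiply the one-step identity by $\|\nab\vu^{n+1}\|^{2^{q}-2}_{\vL^2}$, repeat the Young/BDG/Gronwall bookkeeping, and induct on $q$.

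Assertion (b) is where the divergence-free noise pays off. Writing \eqref{eq_newa} in strong form,
\begin{align*}
\vu^{n+1}-\vu^n-k\Delta\vu^{n+1}+k\nab r^{n+1}=\eeta^n\Delta W_{n+1},
\end{align*}
I would take the $\vL^2$-inner product with $\nab r^{n+1}$. Since $\vu^{n+1},\vu^n$ and $\eeta^n$ are all divergence free and the domain is periodic, both the time-difference term and the entire stochastic term vanish, leaving
\begin{align*}
k\|\nab r^{n+1}\|^2_{\vL^2}=k\bigl(\Delta\vu^{n+1},\nab r^{n+1}\bigr).
\end{align*}
Decomposing $\Delta\vu^{n+1}=-\vA\vu^{n+1}+\nab\psi^{n+1}$ through the Helmholtz projection and using that $\vA\vu^{n+1}\in\mH$ is orthogonal to gradients, only the gradient part survives; elliptic regularity of the Stokes operator gives $\|\nab\psi^{n+1}\|_{\vL^2}\le\|\Delta\vu^{n+1}\|_{\vL^2}\le C\|\vA\vu^{n+1}\|_{\vL^2}$. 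Hence the pathwise bound $\|\nab r^{n+1}\|_{\vL^2}\le C\|\vA\vu^{n+1}\|_{\vL^2}$ holds $\mP$-a.s., with no negative power of $k$. Summing with weight $k$, raising to the $q$-th power, taking expectation, and invoking assertion (a) to bound $\mE\bigl[(k\sum_{n}\|\vA\vu^n\|^2_{\vL^2})^q\bigr]$ then yields (b).

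The main obstacle is conceptual rather than computational: recognizing that replacing $\vB(\vu^n)\Delta W_{n+1}$ by its Helmholtz projection $\eeta^n\Delta W_{n+1}$ forces the stochastic term to be $\vL^2$-orthogonal to $\nab r^{n+1}$, which is exactly the orthogonality absent in Algorithm~1 and responsible for the $k^{-1/2}$ loss in Lemma~\ref{stability_pressure}. A secondary technical point to justify is the regularity needed to pair the equation with $\nab r^{n+1}$; this is legitimate because assertion (a) already places $\vu^{n+1}\in\mV\cap\vH^2_{per}(D)$ and $\eeta^n\in\vL^2_{per}(D)$, whence $\nab r^{n+1}\in\vL^2_{per}(D)$ and $r^{n+1}\in H^1_{per}(D)$.
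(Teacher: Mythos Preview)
Your approach is correct and matches the paper's: part (a) is precisely the energy argument of \cite[Lemma 3.1]{CP12} that the paper cites, and for (b) you test the strong form with $\nab r^{n+1}$ and exploit the $\vL^2$-orthogonality of the divergence-free noise $\eeta^n$ to gradients, which is exactly the mechanism of the proof of \eqref{stability_pressure1} that the paper invokes. One simplification you overlooked: on the periodic domain $\div\Delta\vu^{n+1}=\Delta\div\vu^{n+1}=0$, so $\Delta\vu^{n+1}\in\mH$ already and your Helmholtz remainder $\nab\psi^{n+1}$ is identically zero; thus $k\|\nab r^{n+1}\|^2_{\vL^2}\le Ck\|\vf^{n+1}\|^2_{\vL^2}$ follows directly without any appeal to assertion (a).
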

\begin{proof}
We refer to \cite[Lemma 3.1]{CP12} for a proof of (a). The proof of (b) follows the same lines as the proof of \eqref{stability_pressure1} in Lemma \ref{stability_pressure}.	
\end{proof}

Let $\mH_h\times L_h$ be the Taylor-Hood mixed finite element space pair as defined in Section \ref{section_fullydiscrete} and introduce the following finite element space:
\begin{align*}
S_{h} &=\Bigl\{\psi_h\in  C(\overline{D})\cap H^1_{per}(D);\, \psi_h \in P_1(K)\,\,\forall \,\, K\in \cT_h\Bigr\}\, .
\end{align*}

The mixed finite element approximation of Algorithm 3 can easily be formulated as follows (cf. \cite{Feng1}). 
 
\smallskip 
\noindent
{\bf Algorithm 4}

Let ${\bf u}_h^0$ be $\mH_h$-valued random variable.  For $n=0,1,\ldots, M-1$, we do the following steps:

\smallskip
{\em Step 1:}  Determine ${\xi^{n}_h} \in S_h$ by solving 
\begin{equation}\label{eq_WF_xi1b}
\big(\nab\xi^{n}_h, \nab\phi_h\big) = \big( { {\bf B}({\bf u}^n_h)},\nab\phi_h \big)  
\qquad \forall\,  \phi_h\in S_h,\,\mP-a.s..
\end{equation}

\smallskip
{\em Step 2:} Set $\pmb{\eta}^n_h := {\bf B}({\bf u}^n_h)-\nabla \xi^{n}_h$. Find
$({\bf u}^{n+1}_h,r^{n+1}_h) \in {\mathbb H}_h \times
L_h$ by solving 
\begin{subequations}\label{eq_new_h}
	\begin{align}\label{eq_newa_h}
	&\bigl({\bf u}^{n+1}_h, {\bf v}_h\bigr) + \bigl(\nabla {\bf u}^{n+1}_h,\nabla {\bf v}_h\bigr)  - k \bigl(\div {\bf v}_h, r^{n+1}_h\bigr) \\ \nonumber
	&\qquad= \bigl({\bf u}^n_h,{\bf v}_h\bigr) + k\big(\vf^{n+1}, \mathbf{v}_h\big) + \bigl( \pmb{\eta}^n_h \Delta W_{n+1} , {\bf v}_h\bigr) 
	\qquad\forall \, {\bf v}_h\in {\mathbb H}_h,\,\mP-a.s.,\\
	&\bigl(\div {\bf u}^{n+1}_h,q_h\bigr) = 0 \qquad\forall \, q_h\in  L_h,\,\mP-a.s..\label{eq_newb_h}
	\end{align}
\end{subequations}

\smallskip
{\em Step 3:} Define the $L_h$-valued random variable {$p^{n+1}_h = r^{n+1}_h + k^{-1} \xi^{n}_h \Delta W_{n+1}  $.}  

\smallskip
It turns out that the improved stability estimate for the pseudo pressure approximation in Lemma \ref{stability_optimal} (b) is crucial for obtaining the optimal order high moment error estimates for $\{\vu_h^n, p^n_h\}$ generated by Algorithm 4. We end this section by the following theorem which establishes those optimal estimates.

\smallskip
\begin{theorem}\label{theorem_optimal} 
	Let $2 \leq q < \infty$ and $0< \gamma <\frac12$. Assume that $\vu_0\in L^q(\Ome; \mV)$ and $\vu_h^0 \in L^q(\Ome;\mH_h)$ such that $\mE\bigl[\|\vu_0-\vu_h^0\|^q_{\vL^2}\bigr] \leq Ch^q$. Let $\bigl(\vu, P, R\bigr)$ be solution defined by \eqref{eq2.8a}, Theorem \ref{thm 2.2} and \eqref{eq_R}, respectively. Let $\{\vu^n_h,p^n_h,r_h^n\}$ be the velocity and pressure approximation  generated by Algorithm 4. Then, there exists a constant $C = C(D_T, \vu_0, q, \vf)>0$ such that 
	\begin{align}
	\label{eq5.5}\mE\biggl[\max_{1\leq n \leq M}\|\vu(t_n) - \vu^n_h\|^q_{\vL^2} + \biggl\|\nu k \sum_{n=1}^{M}\nab\bigl(\vu(t_n) - \vu^n_h\bigr)\biggr\|^q_{\vL^2}\biggr] &\leq C\bigl(k^{\gamma q} + h^q \bigr),\\
\label{eq5.6}	\mE\biggl[\biggl\|P(t_{\ell}) - k\sum_{n=1}^{\ell} p^n_h\biggr\|^q_{L^2} +\biggl\|R(t_{\ell}) - k\sum_{n=1}^{\ell} r^n_h\biggr\|^q_{L^2}\biggr] &\leq C\bigl(k^{\gamma q} + h^q\bigr),
	\end{align}
	where $1 \leq \ell \leq M$.
	
	In addition, let $2<q<\infty$ and $0<\gamma < \frac12$ such that $\gamma - \frac{1}{q}>0$ and $1 - \frac{1}{q}>0$. Then, for any $0<\gamma_1 < \gamma - \frac{1}{q}$ and $0<\gamma_2 < 1 - \frac{1}{q}$, there exists a random variable $K$ with $\mE[|K|^q] < \infty$ such that there holds $\mP-a.s.$
	\begin{align}
	\label{eq5.7}\max_{1\leq n \leq M}\|\vu(t_n) - \vu^n_h\|_{\vL^2} + \biggl\|\nu k \sum_{n=1}^{M}\nab\bigl(\vu(t_n) - \vu^n_h\bigr)\biggr\|_{\vL^2} &\leq K\bigl(k^{\gamma_1} + h^{\gamma_2}\bigr),\\
	\label{eq5.8}\biggl\|P(t_{\ell}) - k\sum_{n=1}^{\ell} p^n_h\biggr\|_{L^2} + \biggl\|R(t_{\ell}) - k\sum_{n=1}^{\ell} r^n_h\biggr\|_{L^2} &\leq K\bigl(k^{\gamma_1} + h^{\gamma_2} \bigr).
	\end{align}
\end{theorem}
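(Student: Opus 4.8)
The plan is to establish the high-moment bounds \eqref{eq5.5}–\eqref{eq5.6} first and then deduce the pathwise estimates \eqref{eq5.7}–\eqref{eq5.8} by a direct appeal to the Kolmogorov Criteria (Theorem \ref{kolmogorov}), precisely as in Sections \ref{section_semi}–\ref{section_fullydiscrete}. The starting observation, which makes the temporal part essentially free, is that the velocity iterate $\vu^n$ of Algorithm 3 coincides with that of Algorithm 1. Indeed, writing $\eeta^n=\vB(\vu^n)-\nab\xi^n$ and integrating the gradient part by parts, one finds $(\eeta^n\Delta W_{n+1},\vv)=(\vB(\vu^n)\Delta W_{n+1},\vv)+(\xi^n\Delta W_{n+1},\div\vv)$, so that after absorbing $k^{-1}\xi^n\Delta W_{n+1}$ into the pressure (i.e. using $p^{n+1}=r^{n+1}+k^{-1}\xi^n\Delta W_{n+1}$ from Step 3) the system \eqref{eq_new} reduces exactly to \eqref{eulerscheme}. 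Consequently Theorem \ref{theorem_semi}, Corollary \ref{cor3.4} and Theorem \ref{theorem_semi_veolocityH1} apply verbatim to $\vu^n$, giving $\mE[\max_n\|\vu(t_n)-\vu^n\|^q_{\vL^2}]\le Ck^{\gamma q}$ together with the companion time-averaged $\vH^1$ bound.

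It then remains to control the spatial error $\vE^n=\vu^n-\vu^n_h$, and here I would mirror the proof of Theorem \ref{theorem_fully}: subtract \eqref{eq_newa_h} from \eqref{eq_newa}, test against $\vQ_h\vE^{n+1}$, apply the binomial identity, and bound the pressure and noise contributions. The decisive difference is that the pressure entering the error equation is now the pseudo-pressure $r^{n+1}$ rather than $p^{n+1}$, so the troublesome term $Ckh^2\|\nab r^{n+1}\|^2_{\vL^2}$ is controlled by the \emph{uniform-in-$k$} estimate of Lemma \ref{stability_optimal}(b), which replaces the $k^{-1}$-singular bound of Lemma \ref{stability_pressure}(ii). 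This is exactly the mechanism that removes the spurious $k^{-1/2}$ factor and yields $\mE[\max_n\|\vE^n\|^q_{\vL^2}]+\mE[(\nu k\sum_n\|\nab\vE^n\|^2_{\vL^2})^{q/2}]\le Ch^q$. Combining with the temporal estimate via the triangle inequality $\vu(t_n)-\vu^n_h=(\vu(t_n)-\vu^n)+\vE^n$ gives \eqref{eq5.5}.

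For the pressure estimate \eqref{eq5.6} I would treat the two quantities in tandem. For $R(t_{\ell})-k\sum_{n=1}^{\ell}r^n_h$ I follow the template of Theorems \ref{theorem_semi_pressure} and \ref{theorem_fully_pressure}: sum the velocity error equation, apply the inf-sup conditions \eqref{inf-sup} and \eqref{inf-sup_discrete}, and bound the velocity and (discretely divergence-free) noise terms by the estimates already in hand, obtaining order $k^{\gamma q}+h^q$. For the full pressure I use the decompositions $P=R+\int_0^{\cdot}\xi\,dW$ from \eqref{eq_R} and $p^n_h=r^n_h+k^{-1}\xi^{n-1}_h\Delta W_n$ from Step 3, so that
\begin{align*}
P(t_{\ell})-k\sum_{n=1}^{\ell} p^n_h
= \Bigl(R(t_{\ell})-k\sum_{n=1}^{\ell} r^n_h\Bigr)
+ \sum_{n=1}^{\ell}\int_{t_{n-1}}^{t_n}\bigl(\xi(s)-\xi^{n-1}_h\bigr)\, dW(s).
\end{align*}
The stochastic-integral remainder is estimated by \eqref{ito4} after splitting $\xi(s)-\xi^{n-1}_h$ into the temporal increment $\xi(s)-\xi(t_{n-1})$, the semi-discrete potential error $\xi(t_{n-1})-\xi^{n-1}$, and the Helmholtz projection error $\xi^{n-1}-\xi^{n-1}_h$; the first two are handled through the elliptic stability of \eqref{eq2.8f}, the Lipschitz bound \eqref{eq2.6a}, Lemma \ref{lemma2.3} and the velocity estimates (order $k^{\gamma}$), while the third contributes order $h$ by standard finite element projection theory. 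Summing these gives the $k^{\gamma q}+h^q$ bound, and \eqref{eq5.7}–\eqref{eq5.8} then follow at once from \eqref{eq5.5}–\eqref{eq5.6} and Theorem \ref{kolmogorov} with $\gamma_1<\gamma-\tfrac1q$ and $\gamma_2<1-\tfrac1q$.

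The main obstacle I anticipate is the treatment of the discretized noise $(\eeta^n-\eeta^n_h)\Delta W_{n+1}$ in the spatial velocity error equation, where $\eeta^n_h=\vB(\vu^n_h)-\nab\xi^n_h$. Unlike in Theorem \ref{theorem_fully}, this increment carries the Helmholtz projection error $\nab(\xi^n-\xi^n_h)$, whose Galerkin orthogonality for \eqref{eq_WF_xi1a}–\eqref{eq_WF_xi1b} together with \eqref{eq2.6a} yields $\|\nab(\xi^n-\xi^n_h)\|_{\vL^2}\le C\bigl(h\,\|\xi^n\|_{H^2_{per}(D)}+\|\vE^n\|_{\vL^2}\bigr)$. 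Propagating high moments of this quantity through the discrete Gronwall argument \emph{without} reintroducing a $k$-singular factor — so that only the clean $h^q$ contribution survives — is the crux of the whole estimate, and it is precisely what the uniform bound of Lemma \ref{stability_optimal}(b) together with the divergence-free structure of $\eeta^n$ is designed to secure.
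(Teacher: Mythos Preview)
Your proposal is correct and follows essentially the same approach as the paper: the paper's proof is a terse sketch that simply directs the reader to repeat the arguments of Theorems \ref{theorem_semi}, \ref{theorem_semi_veolocityH1}, \ref{theorem_fully}, \ref{theorem_semi_pressure}, and \ref{theorem_fully_pressure} with the pseudo-pressure stability of Lemma \ref{stability_optimal}(b) replacing Lemma \ref{stability_pressure}(ii), and then to invoke Theorem \ref{kolmogorov} for the pathwise bounds. Your plan makes these steps explicit---including the identification of Algorithm 3's velocity iterate with that of Algorithm 1, the decomposition of $P(t_\ell)-k\sum_{n\le\ell} p^n_h$ via \eqref{eq_R} and Step 3, and the handling of the modified noise increment $\eeta^n-\eeta^n_h$ through the Galerkin error for \eqref{eq_WF_xi1a}--\eqref{eq_WF_xi1b}---but the underlying strategy is identical to what the paper sketches.
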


\begin{proof}
	 The proof of \eqref{eq5.5} follows the same lines as in the proofs of Theorem \ref{theorem_semi} and Theorem \ref{theorem_fully} but using instead the improved  stability estimate for the pseudo pressure approximation given in Lemma \ref{stability_optimal} (b). 
	 $\eqref{eq5.6}$ with $q=2$ was proved in \cite[Theorems 3.3 and 4.2]{Feng1}. Again, by mimicking  the proofs of Theorems \ref{theorem_semi_pressure} and  \ref{theorem_fully_pressure} we can obtain the desired high moment error estimates. Finally, estimates \eqref{eq5.7} and \eqref{eq5.8} are  direct corollaries of Komogorov's Criteria, Theorem \ref{kolmogorov}. 
\end{proof}

\medskip
\textbf{Acknowledgments.} 
The author is grateful to Professor Xiaobing Feng for suggesting him to work on this project and would like to thank him for his advices and many stimulating discussions and valuable suggestions. 


\end{document}